\documentclass[10pt]{amsart}
\usepackage{amsmath,amssymb,amsfonts}
\usepackage{enumerate}
\usepackage[pagebackref=true]{hyperref}
\hypersetup{
   colorlinks=true,
    linkcolor=black, citecolor=black,
   filecolor=magenta,      
    urlcolor=black,
    pdftitle={Archimedean Asai Zeta Integrals and Cohomological Test Vectors},
    pdfpagemode=UseNone,
    }
\usepackage{color}
\usepackage[dvipsnames]{xcolor}
\definecolor{sz}{RGB}{115,45,2}

\newcommand{\tb}[1]{\textcolor{sz}{\bf #1}}

\usepackage{geometry}
\usepackage{mathtools}

\usepackage{graphicx}
\usepackage{verbatim}
\newcommand{\iddots}{\reflectbox{$\ddots$}}

\newcommand{\C}{\mathbf C}

\newcommand{\GL}{\operatorname{GL}}

\newcommand{\As}{\operatorname{As}}

\theoremstyle{plain}
\newtheorem{thm}{Theorem}[section]
\newtheorem{cor}[thm]{Corollary}
\newtheorem{lem}[thm]{Lemma}
\newtheorem{prop}[thm]{Proposition}

\newtheorem{defn}[thm]{Definition}

\theoremstyle{remark}

\newtheorem{remark}[thm]{Remark}

\DeclareMathOperator{\Hom}{Hom}

\DeclareMathOperator{\Ind}{Ind}

\begin{document}
\title{Archimedean Asai Zeta Integrals and Cohomological Test Vectors}
\author{Bingchen Lin}
\address[B.L.]{        
Sichuan University, 24 South Section 1, 1st Ring Road, Chengdu, 610065,\\ China.}
\email{malinbc@scu.edu.cn}

\author{Dongming She}
\address[D.S.]{Beijing Institute of Mathematical Sciences and Applications (BIMSA), No. 544 Hefangkou Village Huaibei Town, Huairou District, Beijing, 101408,\\
China}
\email{shedongming@bimsa.cn}


\subjclass[2020]{11F70;22E50;22E35; 20G25}

\begin{abstract}
We construct explicit test vectors in the minimal $U(n)$-types
of cohomological principal series representations of
$\mathrm{GL}_n(\mathbb C)$, together with Schwartz functions,
for which the Flicker zeta integrals equal the corresponding
Asai $L$-factors. Both the vectors and the Schwartz functions
are independent of $s$, with all normalization constants
given explicitly. We construct the vectors through a polynomial
realization of Schur modules. Schur orthogonality reduces the
Flicker integrals to Mellin transforms of Whittaker functions,
and Godement sections yield a recurrence for their Jacquet
integrals. The lower-rank vectors depend on the remaining
integration variables, preventing a direct reduction to
lower-rank Flicker integrals. We overcome this difficulty by
combining growth estimates derived from the recurrence with
the Asai functional equation. This proves that the holomorphic
quotient of the Flicker integral by its Asai $L$-factor is
constant. An explicit asymptotic calculation shows that this
constant is exactly one.
\end{abstract}
\maketitle

\tableofcontents
\clearpage
\allowdisplaybreaks[0]
\setlength{\emergencystretch}{3em}
\section{Notations and preliminaries}\label{read:notations}
Throughout this paper, we set
\begin{itemize}
    \item $G_n:=\mathrm{GL}_n$. $B_n=A_nN_n$ is the Borel subgroup of upper triangular matrices in $G_n$, where $A_n$ is the maximal torus and $N_n$ is the unipotent radical of $B_n$. For a matrix $g$, write
$g^{-t}$ for ${}^t(g^{-1})$.

    \item Set \[K_n=U(n):=\{g\in \GL_n(\mathbb{C}): g{^t}\bar{g}=1\}\] \[K_{n,\mathbb{R}}=O(n):=\{g\in \GL_n(\mathbb{R}): g{^t}g=1\}\] 
    We also denote them by $K$ and $K_{\mathbb{R}}$ respectively, if there is no ambiguity on the size $n$. 
    
    \item For a matrix $A$ over $\mathbb C$, $A^*={}^t\bar A$ is its Hermitian adjoint.
    
    \item  $W(G_n,A_n)$: the Weyl group of $G_n$. Then $W(G_n,A_n)\simeq S_n$. Denote the long Weyl group element of $W(G_n, A_n)$ by $w_n$. We choose its representative as
$\dot{w}_n=\begin{pmatrix}
        & & 1\\
        & \iddots & \\
        1 & & \\
    \end{pmatrix}$.
    \item $\mathcal{S}(\mathbb{R}^n)$: the space of Schwartz functions on $\mathbb{R}^n$. 
    
    \item Let $\Gamma(z)$ be the standard $\Gamma$-function, set \[
 \Gamma_{\mathbb{R}}(z)=\pi^{-z/2}\Gamma(z/2),
 \quad \text{ and }
 \Gamma_{\mathbb{C}}(z)=2(2\pi)^{-z}\Gamma(z).
\]
\item For a partition $\lambda=(\lambda_1,\ldots,\lambda_n)$,
let $\mathbb S_\lambda$ be its Schur functor and put
$|\lambda|=\sum_i\lambda_i$.  We occasionally identify a partition with
its Young diagram. For a linear map $A:V\to W$,
$\mathbb S_\lambda(A):\mathbb S_\lambda(V)\to\mathbb S_\lambda(W)$
denotes the induced linear map. We occasionally denote the highest weight module of weight $\lambda$ by $V_\lambda$.

\item For $1\le k\le n$ put
$\eta^{[k]}=(d_1-d_k,\ldots,d_{k-1}-d_k,0)$ and
$V_k=\mathbb S_{\eta^{[k]}}(\mathbb C^k)$.
The minimal $U(k)$-type is $\tau_k=\det^{d_k}\otimes V_k$.
Denote by $v_k^+$ the highest weight vector of norm 1, with respect to a given $U(k)$-invariant Hermitian form $\langle\cdot,\cdot\rangle$ on $\tau_k$.
We use the Hermitian forms induced by the tensor realizations below and
identify $v_k^+$ with the corresponding vector in $V_k$ using the standard
unit vector in the determinant line $\det^{d_k}$.

\item For $\mathbf z=(z_1,\ldots,z_m)\in \mathbb C^m$, an integral over
$(\mathbf z)$ means integration over a product of
vertical lines 
\[
 \mathcal C(\mathbf c)=\prod_{j=1}^m(c_j+\mathrm i\mathbb R),\qquad \mathbf c=(c_1,\ldots,c_m)\in\mathbb R^m,
\]
with specific $c_i\in \mathbb R$, where $\mathrm i^2=-1$. Each vertical line is oriented from
$c_j-\mathrm i\infty$ to $c_j+\mathrm i\infty$.
More precisely, with $\mathbf t\in\mathbb R^m$ and Lebesgue
measure $d\mathbf t=dt_1\cdots dt_m$, our convention is
\[
 \int_{(\mathbf z)}F(\mathbf z)\prod_{j=1}^m\frac{dz_j}{2\pi\mathrm i}
 :=\frac1{(2\pi)^m}\int_{\mathbb R^m}
F(\mathbf c+\mathrm i\mathbf t)\,d\mathbf t.
\]
The inequalities beside each formula specify the admissible
real constants $c_i=\Re z_i$; these constants remain fixed
throughout that integral. All such integrals are first used
in a domain of absolute convergence. An empty contour integral
($m=0$) means the integrand itself, with no integration.
The notation $(z)$ has the same meaning when $m=1$.

\end{itemize}

\section{Introduction}

Integral representations connect the analytic properties of automorphic
$L$-functions with periods of automorphic forms. Their arithmetic
applications require control of the local integrals, including those at
archimedean places. The test-vector problem asks for explicit local data,
independent of the complex variable, whose zeta integral equals the
corresponding $L$-factor.

For cohomological representations, vectors in the minimal compact type
enter the comparison between Whittaker models and relative Lie algebra
cohomology. Exact evaluation of their integrals determines normalization
constants in period relations. For Asai $L$-functions, this connection
appears in the work of Grobner--Harris--Lapid on Whittaker rational
structures and special values \cite{grobner-harris-lapid2016}, and in the
rationality results of Grobner--Lin and their application to the refined
Gan--Gross--Prasad conjecture \cite{grobner-lin2021}. We provide such an
evaluation for the cohomological principal series below, with specified
vectors and measures, for use in forthcoming arithmetic applications.

We recall the three constructions of local Asai factors. Let $E/F$
be a quadratic extension of local fields of characteristic zero and
let $\pi$ be an irreducible generic representation of $\GL_n(E)$.
The arithmetic factor is
$L(s,\As^+(\phi_\pi))$, where $\phi_\pi$ is the local Langlands
parameter and $\As^+(\phi_\pi)$ is its positive tensor induction
from $E$ to $F$. There are also two analytic constructions.
Flicker's zeta integrals \cite{flicker1988,flicker1993}, further
developed by Kable \cite{kable2004}, define the Rankin--Selberg
Asai factor; over a nonarchimedean field it is the normalized
generator of the fractional ideal spanned by these integrals.
The Langlands--Shahidi construction defines Asai factors through
local coefficients for unitary groups
\cite{shahidi1990,goldberg1994}.
The choice of sign in the Asai representation is understood to agree
in these constructions; throughout this paper we use $\As^+$.

The agreement of these factors is the outcome of several works.
Over $p$-adic fields, Anandavardhanan--Rajan
\cite{anandavardhanan-rajan2005} proved the equality of the
Rankin--Selberg and Langlands--Shahidi $L$-factors for square-integrable
representations, and Matringe \cite{matringe2011} established it for
general generic representations. Henniart \cite{henniart2010}
identified the Langlands--Shahidi $L$-factor with the arithmetic
factor. Shankman \cite[Theorem~1]{shankman2018} proved compatibility
with tensor induction for both $L$- and $\epsilon$-factors.
At archimedean places, the comparison of local coefficients with
Artin factors belongs to Shahidi's work \cite{shahidi1985}.
Beuzart-Plessis \cite{beuzart2021archimedean} established the
archimedean analytic theory of Flicker integrals, including
meromorphic continuation, holomorphy after division by the arithmetic
$L$-factor, and the functional equation with the arithmetic
$\epsilon$-factor. His comparison of $\epsilon$-factors also treats
nonarchimedean fields of characteristic zero. The character and
measure corrections in that functional equation are essential for
the exact normalization used here.

Explicit test vectors require further information beyond the
identification of the local factors. In the nonarchimedean setting,
Matringe's construction of essential Whittaker functions
\cite{matringe2013} provides a basic tool for evaluating local
integrals. Anandavardhanan--Matringe
\cite{anandavardhanan-matringe2017} prove that the essential vector
is a test vector for the standard distinguishing period of a
distinguished generic representation, and evaluate that period in
terms of local $L$-values. For distinguished cuspidal representations
over fields of odd residual characteristic,
Anandavardhanan--Kurinczuk--Matringe--S\'echerre--Stevens
\cite[Theorem~1.8]{akmss2021} construct explicit Whittaker and
Schwartz data whose Flicker integral is the Asai $L$-factor.

At archimedean places, Humphries--Jo
\cite[Theorem~6.6 and Proposition~6.7]{humphries-jo2024}
construct weak test vectors for Flicker integrals using archimedean
newforms: the resulting integral is an explicit polynomial multiple
of the Asai $L$-factor.

Beuzart-Plessis \cite[Introduction, after Theorem~1]{beuzart2021archimedean}
formulates the expectation that, for a generic irreducible
representation $\pi$ of $\GL_n(\mathbb C)$, there exist finite
families $W_i\in\mathcal W(\pi,\psi)$ and
$\Phi_i\in\mathcal S(\mathbb R^n)$, independent of $s$, such that
\[
 L(s,\pi,\As^+)=\sum_i Z(s,W_i,\Phi_i).
\]
Motivated by arithmetic applications, we establish a stronger
form of this expectation for the cohomological principal series
considered here: we construct an explicit test vector in the
minimal $U(n)$-type and an explicit Schwartz function, both
independent of $s$, for which a single Flicker zeta integral
equals the Asai $L$-factor exactly.

Here is the main result. Fix $n\ge2$, a dominant integral weight
$\lambda=(\lambda_1,\ldots,\lambda_n)$, and $m\in\mathbb Z$.
Set
\[
 d_i=2\lambda_i+n+1-2i-m,\qquad
 \chi_{d_i}(z)=\left(\frac z{|z|}\right)^{d_i},\qquad
 \pi=\Ind_{B_n(\mathbb C)}^{G_n(\mathbb C)}
       \bigotimes_{i=1}^n
       \bigl(\chi_{d_i}|\cdot|_{\mathbb C}^{m/2}\bigr).
\]
Induction is normalized. This is the essentially tempered
cohomological representation associated with the pure coefficient
system $E_{\lambda,m}$ of Proposition~\ref{cohfamily}.
The integers $d_i$ strictly decrease and have common parity
$\epsilon\in\{0,1\}$. Put
$b_{ij}=(d_i-d_j)/2$ and $N=\binom n2$.
Let $H_{n\epsilon}^{(n)}$ be the real $O(n-1)$-fixed harmonic
polynomial of degree $n\epsilon$ normalized by
$H_{n\epsilon}^{(n)}(e_n)=1$, and let
$d_{n\epsilon}=\dim\mathcal H_n^{n\epsilon}$.

\begin{thm}\label{intro:main}
For every $\lambda$ and $m$ as above, the minimal $U(n)$-type of
$\pi$ contains the harmonic $O(n)$-type $\mathcal H_n^{n\epsilon}$
with multiplicity one, and no harmonic type of smaller degree.
Let $f_{n,\epsilon}$ be the section on the $O(n-1)$-fixed line
in this harmonic component, defined and normalized by
\eqref{read:lowest-polynomials}.
Set
\[
 \begin{gathered}
 c=2^{n-1},\qquad
 A_{n,\epsilon}=(-1)^{N\epsilon}2^{n(n+1)/2}
\prod_{i<j}\Gamma_{\mathbb C}(1+b_{ij}),\qquad
 \Phi_{n,\epsilon}(x)
 =A_{n,\epsilon}d_{n\epsilon}
   H_{n\epsilon}^{(n)}(\sqrt c\,x)e^{-\pi c\|x\|^2}.
 \end{gathered}
\]
With the Jacquet integral \eqref{read:Jacquet} and the quotient
measure specified in \eqref{eq:Iwasawa-measure}, one has
\[ Z(s,W_{f_{n,\epsilon}},\Phi_{n,\epsilon})
 =L(s,\pi,\As^+)=\Gamma_{\mathbb R}(s+m+\epsilon)^n
      \prod_{i<j}\Gamma_{\mathbb C}(s+m+b_{ij}).
\]
The integral converges absolutely for $\Re s>-m$, and the identity
holds elsewhere by meromorphic continuation.
\end{thm}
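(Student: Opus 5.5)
The proof has four stages, following the strategy announced in the abstract.

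\emph{Branching and the Schur orthogonality reduction.} First we settle the branching statement. The minimal $U(n)$-type is $\tau_n=\det^{d_n}\otimes\mathbb S_{\eta^{[n]}}(\mathbb C^n)$, and we restrict it to $O(n)$. We use the polynomial realization of the Schur module: $\tau_n$ sits inside polynomials in the entries of a matrix variable, and $\det^{d_n}$ restricts to $\det^{d_n}=\det^{\epsilon}$ on $O(n)$. A degree count, via Littlewood's branching rule for $U(n)\downarrow O(n)$ applied to the shape $\eta^{[n]}+(d_n,\dots,d_n)$, shows that the smallest harmonic degree occurring is $n\epsilon$, and that it occurs once. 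Concretely, this is the trivial type when $\epsilon=0$ and $\mathcal H_n^{n}$ via the $\det$-twist when $\epsilon=1$.

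We then write the Flicker integral in Iwasawa coordinates $g=nak$ on $N_n(\mathbb R)\backslash G_n(\mathbb R)$ with the measure \eqref{eq:Iwasawa-measure}. Since $f_{n,\epsilon}$ spans the $O(n-1)$-fixed line of the harmonic component and $\Phi_{n,\epsilon}$ is a Gaussian times $H^{(n)}_{n\epsilon}$, the $K_{\mathbb R}$-integral
\[
\int_{O(n)}W_{f_{n,\epsilon}}(ak)\,\Phi_{n,\epsilon}(e_nak)\,dk
\]
collapses by Schur orthogonality to a matrix coefficient. The factor $d_{n\epsilon}$ in $\Phi_{n,\epsilon}$ is there precisely to cancel the $1/d_{n\epsilon}$ produced by orthogonality. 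What remains is a Mellin transform, over $A_n(\mathbb R)$, of the restriction of $W_{f_{n,\epsilon}}$ to the real torus, multiplied by the Gaussian $\Gamma_{\mathbb R}$-type factor coming from the last coordinate.

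\emph{Recurrence for the Jacquet integral.} Next we express $W_{f_{n,\epsilon}}$ on $A_n$ through a recurrence. We factor the Jacquet integral \eqref{read:Jacquet} along $N_n=N_{n-1}\ltimes\mathbb C^{n-1}$. Using Godement sections, so that the $\mathbb C^{n-1}$-integration becomes a Gaussian/Tate integral, the rank-$n$ Whittaker function becomes a Mellin--Barnes integral against rank-$(n-1)$ Whittaker functions. These lower-rank functions come from vectors in $\tau_{n-1}$, but they depend on the remaining integration variables. Because of this dependence, the lower-rank Flicker integral cannot simply be factored out, and this is the main obstacle. From the recurrence we will nevertheless extract uniform growth and decay estimates. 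These give absolute convergence for $\Re s>-m$, and they show that
\[
Z(s)/L(s,\pi,\As^+)
\]
is entire of finite order with at most polynomial growth in vertical strips. Holomorphy of the quotient itself is Beuzart-Plessis's result \cite{beuzart2021archimedean}.

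\emph{Constancy of the quotient.} We now apply the Asai functional equation of Beuzart-Plessis, with its explicit $\epsilon$-factor and measure corrections. It relates $Z(s,W,\Phi)$ to $Z(1-s,\tilde W,\hat\Phi)$. Our data are self-dual up to explicit constants: the Fourier transform of a Hermite-type Gaussian $H(\sqrt c x)e^{-\pi c\|x\|^2}$ is again of the same form, and $\tilde W$ is the analogous vector for $\pi^\vee$. Hence the quotient $Q(s)$ satisfies $Q(s)=\kappa\,Q(1-s)$ for a constant $\kappa$. Combining this with Stirling's formula for the $\Gamma$-quotients and the growth bounds from the previous stage, we find that $Q$ is a polynomial bounded on both half-planes by the same power. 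Pushing the estimates to get $o(|s|)$ growth on both sides then forces $Q$ to be constant.

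\emph{Normalization.} Finally, we identify the constant by an asymptotic calculation as $s\to+\infty$ along the real axis, where a Laplace-method analysis applies. The Mellin integral is dominated by a neighbourhood of the torus point where the Gaussian and the Whittaker leading term balance. The recurrence gives the leading asymptotics of $W_{f_{n,\epsilon}}$ with the explicit constant $\prod_{i<j}\Gamma_{\mathbb C}(1+b_{ij})^{-1}$ up to powers of $2$ and the sign $(-1)^{N\epsilon}$. Comparing with Stirling's formula for $L(s,\pi,\As^+)$ shows that $A_{n,\epsilon}$ and $c=2^{n-1}$ make the ratio tend to $1$, so the constant $Q$ equals $1$.
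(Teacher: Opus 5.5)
Your outline has the paper's architecture: Schur orthogonality, the Godement recurrence, the Beuzart-Plessis functional equation, and an asymptotic normalization. The gap is in your third stage, where the step that makes the functional-equation argument work is missing.

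\textbf{The growth bound you propose does not suffice.} The relation is $Q(s)=c_\pi Q^\vee(1-s)$, with $Q^\vee$ the quotient for the dual data rather than $Q$ itself. It only transports a bound from $\Re s\ge\frac12$ to $\Re s\le\frac12$ and adds nothing else. So everything rests on a bound on the half-plane $\Re s\ge\frac12$ that is polynomial in $|s|$ \emph{uniformly}, including as $\Re s\to+\infty$.

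Polynomial growth in vertical strips cannot detect factors such as $b^s$ or $e^{s^2}$. This is not hypothetical. Replacing $c=2^{n-1}$ by another $c'>0$ multiplies $Q$ by $(c/c')^{ns/2}$, by \eqref{read:period-M}. That factor is entire, bounded in every vertical strip, and not a polynomial. The functional-equation constant is $s$-independent only because $c=2^{n-1}$ makes $c^{n(1/2-s)}$ (from $\widehat\Phi$ and the central rescaling) cancel $|z_0|_{\mathbb C}^{N(s-1/2)}$; see \eqref{read:FE-cancellation}.

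Even polynomial growth in $\Im s$ is not automatic. Bounding $|Z(s)|$ by the absolutely convergent integral at $\Re s$ and dividing by $|L(s)|$, which decays like $e^{-\pi(n/4+N/2)|\Im s|}$ up to powers, gives only an exponentially growing bound. The decay of $Z$ in $\Im s$ must match that of $L$ exactly, and ``growth and decay estimates from the recurrence'' supplies no mechanism for this.

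\textbf{How the paper gets the bound.} The lower-rank vector depends on the integration variables, so the paper does not induct on Flicker integrals. Instead it iterates \eqref{read:recursion} all the way down, giving $M_{n,\epsilon}=K_{n,\epsilon}I_{P_\epsilon,n}$ with $P_\epsilon$ homogeneous of degree $2\Omega$ (Proposition~\ref{upper:iteration}). Integrating along the common scaling direction $\mathbf v$ extracts $\Gamma(Ns+\Omega)$ (Lemma~\ref{pub:scale-disintegration}). Its decay in $\Im s$ and its shift $\Omega$ match the Asai gamma factors exactly. The remaining integral is bounded by the scalar integral $I_{1,n}(\Re s)$, which is evaluated exactly via the Ishii--Stade convolution, so the exponentials $a_0^{\pm\Re s}$ cancel under Stirling. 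This gives $|Q(s)|\le C((1+|s|)/\Re s)^\rho$ on $\Re s\ge\frac12$, and likewise for $Q^\vee$. Hence $Q$ is a polynomial, and boundedness on $[\frac12,\infty)$ makes it constant.

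\textbf{Your fourth stage is underspecified for the same reason.} There is no usable ``leading term'' of $W$ on the torus. The Laplace method has to run on the $D$-dimensional iterated integral, at the unique minimum of $\mathcal F=\mathcal V-\log\mathcal A$. The paper avoids computing the Hessian by dividing by the exactly known $I_{1,n}$. The constant also requires $P_\epsilon(x_*)=n^{-n\epsilon/2}$, which cancels the factor $n^{-n\epsilon/2}$ in $G_{\mathbf r,\epsilon}(s)/G_0(s)$; your stated constant omits it.

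\textbf{Stage one.} Littlewood's restriction rule is valid only in the stable range $\ell(\lambda)\le n/2$, whereas $\eta^{(\epsilon)}$ has length $n-1$ or $n$. The paper instead counts $O(n)$-invariants in $\mathbb S_{\eta^{(\epsilon)}}(\mathbb C^n)\otimes\operatorname{Sym}^\ell\mathbb C^n$ using Pieri's rule and the even-partition criterion, then takes $m_\ell=c_\ell-c_{\ell-2}$.
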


The vector is a polynomial in flag minors in a Schur module, and the
Schwartz function is a zonal harmonic times a Gaussian; both are
independent of $s$. The theorem combines Theorems~\ref{thm:structure}
and~\ref{read:main} with the polynomial construction.

The main difficulty is the evaluation of the integral after the
vector has been constructed. Iwasawa decomposition and Schur
orthogonality reduce it to the torus Mellin integral
$M_{n,\epsilon}(s+m)$ after integrating the positive
center. Jacquet's Godement section \cite[Section~7]{jacquet2009}
reproduces the prescribed
section with an explicit scalar and yields the recurrence
\eqref{read:recursion} for its Jacquet integral.
However, the lower-rank vector
$v_a(t)=D_{n-1}\mathbb S_{\eta^{[n]}}(\mathcal B(t,a))v$
depends on the remaining integration variables. Iteration therefore
produces coupled Gaussian integrals, rather than a product involving
the already evaluated lower-rank Flicker integral. To our knowledge,
no direct induction extracting that lower-rank integral is available
for these prescribed cohomological vectors.

We resolve this difficulty by combining the Godement recurrence
with the Asai functional equation and estimates for the resulting
Gaussian integrals. Write
\[
 Q(s)=
 \frac{Z(s-m,W_{f_{n,\epsilon}},\Phi_{n,\epsilon})}
      {L(s-m,\pi,\As^+)}
 =\frac{M_{n,\epsilon}(s)}{T_{n,\epsilon}(s)}.
\]
By \cite[Theorem~3.5.2]{beuzart2021archimedean},
$Q$ extends holomorphically to all of $\mathbb C$.
Iteration of the recurrence expresses its numerator as a Gaussian
integral with a homogeneous polynomial factor. Integration in a
common scaling variable compares this integral with the scalar
case, whose Mellin transform is evaluated using the
Ishii--Stade convolution \cite{ishii-stade} in the normalization of
\cite[Proposition~6.1]{chen-cheng-theta}.
Stirling's formula then gives
\[
 |Q(s)|\le C\left(\frac{1+|s|}{\Re s}\right)^\rho,
 \qquad \Re s\ge\tfrac12,\qquad
 \rho=\frac{n^2+n-4}{4}.
\]
The same estimate holds for the quotient associated with the
contragredient. The functional equation of
\cite[Theorem~3.5.1]{beuzart2021archimedean}, together with the
Fourier transform of the chosen Schwartz function, relates the
two quotients by $Q(s)=c_\pi Q^\vee(1-s)$, where $c_\pi\ne0$
is independent of $s$. This gives polynomial growth on the whole
plane. Cauchy's estimates imply that $Q$ is a polynomial, and
the bound on the positive real axis makes it constant.

Finally, rescaling the iterated integral reduces its asymptotics to the
unique minimum of $\mathcal F=\mathcal V-\log\mathcal A$. Evaluation
of the flag polynomials there gives $\lim_{s\to+\infty}Q(s)=1$.
Thus the coupled recurrence, functional equation, growth estimates,
and normalization limit determine the integral without assuming the
desired Asai identity in the analytic estimates.

Our construction provides explicit test vectors for cohomological
principal series uniformly in all ranks, with every normalization
constant specified. Although the evaluation of their Flicker
integrals uses growth estimates, asymptotic analysis, and the Asai
functional equation, the recurrence \eqref{read:recursion} arising
from the prescribed data in the minimal $K$-type is essential.
Its iteration determines both the homogeneous polynomial and the
exact scalar in the integral representation. Integration in the
common scaling variable produces $\Gamma(Ns+\Omega)$, whose shift
matches the total shift in the Asai gamma factors and yields the
precise bound in Proposition~\ref{read:analytic-input} (2).
The exact recurrence constants, together with the polynomial's
value at the unique minimum, then give the limit $Q(s)\to1$
in part (3). These two analytic inputs, proved in
Section~\ref{read:analytic}, thus depend crucially on the explicit
construction in the minimal $K$-type.

The paper is organized as follows. After the notation in Section~1,
Section~3 recalls the Asai factors, Flicker integrals, and the results
of Beuzart-Plessis used in the proof. Section~4 describes the
$O(n)$-types in $\mathcal S(\mathbb R^n)$ and their finite vectors.
Section~5 identifies the lowest harmonic type in the minimal
$U(n)$-type, constructs its polynomial vector, and derives the
Mellin reduction and Godement recurrence.
Subsection~\ref{read:evaluation} proves the main identity from the
functional equation and Proposition~\ref{read:analytic-input}.
Section~\ref{read:analytic} proves parts (2) and (3) of that proposition:
it evaluates the scalar comparison integral, derives the growth
estimate, and computes the normalization limit.

\subsection*{Acknowledgement}This joint work was initiated during the International Symposium on Number Theory held at Sichuan University in Oct. 2025. Both authors would like to thank the organizers for their warm hospitality.

\section{Archimedean Asai L-functions and Flicker zeta integrals} 
\subsection{Archimedean Asai L-functions}
We use $|z|_{\mathbb C}=|z|^2=z\bar z$ and $|x|_{\mathbb R}=|x|$.
Write
\[
 W_{\mathbb C}=\mathbb C^\times,\qquad
 W_{\mathbb R}=\mathbb C^\times\sqcup j\mathbb C^\times,\qquad
 jzj^{-1}=\bar z,\quad j^2=-1.
\]
For an L-parameter $\phi:W_{\mathbb C}\to\GL(V)$, define its  Asai transfer on $V\otimes V$ by tensor inductions:
\[
 \As^+(\phi)(z)(v\otimes w)=\phi(z)v\otimes\phi(\bar z)w,
 \qquad
 \As^+(\phi)(j)(v\otimes w)=\phi(-1)w\otimes v.
\]
We also set
$\As^-(\phi)=\As^+(\phi)\otimes\omega_{\mathbb C/\mathbb R}$,
where $\omega_{\mathbb C/\mathbb R}$ is the quadratic character defined by the extension $\mathbb C$ over $\mathbb R$ via class field theory. It is trivial on $W_{\mathbb C}$
and takes the value $-1$ on $j$.  For an irreducible admissible
representation $\pi$ of $G_n(\mathbb C)$ with L-parameter $\phi_\pi$, put
\[
 L(s,\pi,\As^\pm):=L(s,\As^\pm(\phi_\pi)).
\]
Throughout this paper, we consider $L(s,\pi,\As^+)$ represented by the Flicker zeta integrals; see \cite{beuzart2021archimedean}.

Suppose we have a normalized irreducible principal series of $\GL_n(\mathbb C)$:
\[
 \pi=\Ind_{B_n(\mathbb C)}^{G_n(\mathbb C)}
       (\chi_1\boxtimes\cdots\boxtimes\chi_n),\qquad
 \chi_i(z)=\left(\frac z{|z|}\right)^{d_i}|z|_{\mathbb{C}}^{\nu_i},
 \quad d_i\in\mathbb Z,\quad \nu_i\in\mathbb C.
\]
Put $\epsilon_i\in\{0,1\}$ with
$\epsilon_i\equiv d_i\pmod2$.  Thus
$\phi_\pi=\bigoplus_i\chi_i$, and 
\[
 \As^+(\phi_\pi)\simeq
 \bigoplus_{i=1}^n
   |\cdot|_{\mathbb{R}}^{2\nu_i}\operatorname{sgn}^{\epsilon_i}
 \ \oplus\
 \bigoplus_{i<j}\Ind_{W_{\mathbb C}}^{W_{\mathbb R}}(\chi_i\chi_j^c),
\] where $\chi_i^c(z)=\chi_i(\bar z)$. Therefore
\begin{equation}\label{eq:Asai-general-principal-series}
 L(s,\pi,\As^+)=
 \prod_{i=1}^n\Gamma_{\mathbb R}(s+2\nu_i+\epsilon_i)
 \prod_{i<j}\Gamma_{\mathbb C}
 \left(s+\nu_i+\nu_j+\frac{|d_i-d_j|}{2}\right).
\end{equation}
For $\As^-$, replace $\epsilon_i$ by $1-\epsilon_i$ in the real
factors; the complex factors are unchanged.
In particular, if $\nu_i=m/2$ for every $i$, the integers $d_i$ are
decreasing and have common parity $\epsilon$, then
\[
 L(s,\pi,\As^+)=
 \Gamma_{\mathbb R}(s+m+\epsilon)^n
 \prod_{i<j}\Gamma_{\mathbb C}
 \left(s+m+\frac{d_i-d_j}{2}\right).
\]

\subsection{Flicker zeta integrals for Archimedean Asai L-functions}
Let
\[
 \psi_{\mathbb R}(x)=e^{2\pi ix}\quad(x\in\mathbb R),
 \qquad
 \psi(z)=e^{2\pi i\operatorname{Im}z}\quad(z\in\mathbb C).
\]
Then $\psi|_{\mathbb R}=1$ and
$\psi(z)=\psi_{\mathbb R}
\bigl(\operatorname{Tr}_{\mathbb C/\mathbb R}(z_0z)\bigr)$, $z_0=-i/2$.
We also write $\psi$ for the generic character
$u\mapsto\psi(\sum_{i=1}^{n-1}u_{i,i+1})$ of $N_n(\mathbb C)$. Let $\pi$ be an irreducible $\psi$-generic Casselman--Wallach
representation of $G_n(\mathbb C)$. For
$W\in\mathcal W(\pi,\psi)$ and $\Phi\in\mathcal S(\mathbb R^n)$,
define, whenever the integral converges,
\begin{equation}\label{read:Flicker-integral}
 Z(s,W,\Phi)=
 \int_{N_n(\mathbb R)\backslash G_n(\mathbb R)}
 W(g)\Phi(e_ng)|\det g|_{\mathbb R}^s\,dg.
\end{equation}
Here $e_n:=(0,\cdots, 0,1)\in \mathbb R^n$ is a row vector. Put
\[
 \widetilde W(g)=W(\dot w_ng^{-t}),\qquad
 \widehat\Phi(y)=\int_{\mathbb R^n}\Phi(x)e^{2\pi ixy^t}\,dx.
\]
Then $\widetilde W\in\mathcal W(\pi^\vee,\psi^{-1})$.
The following theorem is established by Beuzart--Plessis in \cite{beuzart2021archimedean}:
\begin{thm}[Beuzart--Plessis {\cite[Theorem~1]{beuzart2021archimedean}}]
\label{read:BP-Asai}
For $W\in\mathcal W(\pi,\psi)$ and
$\Phi\in\mathcal S(\mathbb R^n)$, the following hold.
\begin{enumerate}[(i)]
\item\label{read:BP-convergence}
The integral $Z(s,W,\Phi)$ converges absolutely for
$\Re s$ sufficiently large and admits meromorphic
continuation to $\mathbb C$.

\item\label{read:BP-functional-equation}
With $\omega_\pi$ denoting the central character of $\pi$,
\begin{equation}\label{read:BP-FE}
 \frac{Z(1-s,\widetilde W,\widehat\Phi)}
      {L(1-s,\pi^\vee,\As^+)}
 =\omega_\pi(z_0)^{n-1}
 |z_0|_{\mathbb C}^{\binom n2(s-1/2)}
 \lambda_{\mathbb C/\mathbb R}(\psi_{\mathbb R})^{-\binom n2}\times
 \varepsilon_{\mathrm{Art}}
       (s,\As^+(\phi_\pi),\psi_{\mathbb R})
 \frac{Z(s,W,\Phi)}{L(s,\pi,\As^+)}.
\end{equation}
Here $|z_0|_{\mathbb C}=1/4$ and
$\lambda_{\mathbb C/\mathbb R}(\psi_{\mathbb R})=i$.

\item\label{read:BP-holomorphy}
The quotient $Z(s,W,\Phi)/L(s,\pi,\As^+)$ extends
holomorphically to $\mathbb C$.
If $\pi$ is nearly tempered in the sense of
\cite[Section~3.1]{beuzart2021archimedean}, then for each
$s_0\in\mathbb C$ there are $W$ and $\Phi$ for which this
holomorphic extension is nonzero at $s_0$.
\end{enumerate}
\end{thm}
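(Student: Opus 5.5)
Since this is Beuzart-Plessis's theorem, we do not reprove it here. The plan below is the route one would follow to establish it, and each step is where the corresponding input of \cite{beuzart2021archimedean} enters.

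For \eqref{read:BP-convergence}, the plan is to use the Iwasawa decomposition $G_n(\mathbb R)=N_n(\mathbb R)A_n(\mathbb R)K_{n,\mathbb R}$. This reduces $Z(s,W,\Phi)$ to an integral over the real torus of $W(ak)\Phi(e_nak)|\det a|^s\delta_{B_n(\mathbb R)}^{-1}(a)$. Next, insert the standard gauge bound for Whittaker functions of a Casselman--Wallach representation: $|W(ak)|$ is dominated by a finite sum of characters of $a$ times a function of rapid decay in the simple roots $a_i/a_{i+1}$. The Schwartz function $\Phi$ supplies rapid decay in $a_n$. Together these give absolute convergence for $\Re s\gg0$, locally uniformly in $s$.

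For the meromorphic continuation, the plan is to use the asymptotic expansion of $W$ along the torus as a finite sum of terms $\chi(a)(\log a)^k\phi(a)$. Here the exponents $\chi$ are governed by the exponents of $\pi$, and $\phi$ is smooth up to the boundary. Mellin transforms of such terms continue meromorphically, with poles contained in translates of those of a product of $\Gamma$-factors. Equivalently, one can reduce $\pi$ to a principal series via Casselman's embedding and treat Godement-type sections, as in \cite{jacquet2009}.

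For \eqref{read:BP-functional-equation}, the plan is the usual uniqueness argument.
\begin{enumerate}[(a)]
\item Both sides of \eqref{read:BP-FE} are continuous bilinear forms on $\mathcal W(\pi,\psi)\times\mathcal S(\mathbb R^n)$.
\item Both forms are $(G_n(\mathbb R),|\det|^{-s})$-equivariant, using the identity $\widehat{\Phi(\cdot g)}=|\det g|^{-1}\widehat\Phi(\cdot g^{-t})$.
\item For generic $s$, the space of such forms is at most one-dimensional. This follows from a multiplicity-one statement for the relevant mirabolic-type distinguishing functionals, proved via Bernstein--Zelevinsky-type analysis of distributions on the $G_n(\mathbb R)$-orbits of $\mathbb R^n$.
\item Hence the two sides differ by a meromorphic gamma factor $\gamma(s)$.
\item To identify $\gamma(s)$ with the stated product of $\varepsilon_{\mathrm{Art}}$, $\omega_\pi(z_0)^{n-1}$, $|z_0|_{\mathbb C}$-powers and $\lambda_{\mathbb C/\mathbb R}$, one would globalize. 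Realize $\pi$ as the archimedean component of a cuspidal representation, compare the global functional equation of the Flicker integral with that of the Asai $L$-function, and use the known nonarchimedean and unramified factors, including Shankman's and Matringe's comparisons.
\item Multiplicativity of $\gamma$ under parabolic induction then reduces the remaining archimedean cases to $n=1,2$, which are checked by explicit Tate-type computations.
\end{enumerate}

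For \eqref{read:BP-holomorphy}, the plan is as follows. First show that the poles of $Z(s,W,\Phi)$ lie among those of $L(s,\pi,\As^+)$ with at most their multiplicities. This uses the asymptotic expansion above, since its exponents are exactly those producing $\Gamma_{\mathbb R}(s+2\nu_i+\epsilon_i)$ and $\Gamma_{\mathbb C}(s+\nu_i+\nu_j+|d_i-d_j|/2)$, applied first in a right half-plane. The functional equation transports the bound to a left half-plane, and the two half-planes cover $\mathbb C$. For nonvanishing in the nearly tempered case, one would choose $\Phi$ supported near $e_n$ and $W$ with compactly supported restriction to the mirabolic, as Kirillov-model theory allows. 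This makes $Z$ entire and nonzero at any prescribed $s_0$, and $1/L$ has no zeros, so the quotient is nonzero there.

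The main obstacle is the exact pole control and the exact constant in the functional equation. Both require matching asymptotic exponents precisely with the Asai $\Gamma$-factors, and a careful global-to-local normalization of measures, characters and $\lambda_{\mathbb C/\mathbb R}(\psi_{\mathbb R})$.
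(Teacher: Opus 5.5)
You defer to \cite{beuzart2021archimedean}, and that is exactly what the paper does: the theorem is quoted without proof and used afterwards only as a black box. The outline you attach, however, contains a step that fails, in the nonvanishing half of (iii). You take $W$ with compactly supported restriction to the mirabolic and $\Phi$ supported near $e_n$. This makes $Z(s,W,\Phi)$ entire and nonzero at $s_0$, and you then conclude because ``$1/L$ has no zeros''. That is false. By \eqref{eq:Asai-general-principal-series}, $L(s,\pi,\As^+)$ is a product of $\Gamma_{\mathbb R}$- and $\Gamma_{\mathbb C}$-factors. Hence $1/L$ is entire but vanishes at every pole of $L$, to the full order of that pole. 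For instance, for the representation of Theorem~\ref{intro:main}, $1/L$ vanishes to order at least $n$ at $s=-m-\epsilon$. At such a point an entire $Z$ forces $Z/L$ to vanish. So your construction only covers $s_0$ outside the pole set of $L$. At the poles, the whole content of the claim is to produce integrals whose poles have exactly the order of those of $L$. A warning sign is that your argument never uses the nearly tempered hypothesis: if it worked, it would give nonvanishing for every generic $\pi$.

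The missing idea is to combine the functional equation \eqref{read:BP-FE} with near temperedness. The role of that hypothesis is to separate poles: for nearly tempered $\pi$, the poles of $L(s,\pi,\As^+)$ lie in $\Re s<\tfrac12$ and those of $L(1-s,\pi^\vee,\As^+)$ lie in $\Re s>\tfrac12$. Suppose $s_0$ is a pole of $L(s,\pi,\As^+)$. Then $1-s_0$ is not a pole of $L(\cdot,\pi^\vee,\As^+)$. Your construction, applied to $(\pi^\vee,\psi^{-1})$ at $1-s_0$, therefore gives $W',\Phi'$ with $Z(1-s,W',\Phi')/L(1-s,\pi^\vee,\As^+)\neq0$ at $s=s_0$. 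Since $W\mapsto\widetilde W$ and $\Phi\mapsto\widehat\Phi$ are bijections, you can write $W'=\widetilde W$ and $\Phi'=\widehat\Phi$. The factor in \eqref{read:BP-FE} relating the two quotients is entire and nowhere zero, so $Z(s,W,\Phi)/L(s,\pi,\As^+)$ is nonzero at $s_0$.

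A related caution applies to your holomorphy sketch. Take a right half-plane where $Z$ is controlled directly for $\pi$ (for example, absolute convergence) and its reflection for $\pi^\vee$. These cover $\mathbb C$ only when they overlap, which is again essentially a near-temperedness condition. Yet (iii) asserts holomorphy of $Z/L$ for every generic $\pi$, so some further argument is needed beyond the functional equation.
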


\section{\texorpdfstring{$O(n)$-structure of $\mathcal{S}(\mathbb{R}^n)$}{O(n)-structure of Schwartz space}}In this section, 
we study the structure of the Schwartz space
$\mathcal{S}(\mathbb{R}^n)$ under the right regular action of $O(n)$. Corollary \ref{finite-vectors} of Theorem \ref{O(n)-Schwartz structure} is used to construct the explicit Schwartz functions in company with the cohomological test vectors for the evaluation of the Flicker integrals.  
For $d\geq0$, put
\begin{equation}
 \mathcal{H}^d_n
 =\left\{P\in\mathbb{C}[x_1,\cdots, x_n]: P\text{  is homogeneous of degree }d,\
   \Delta_{n}P=0\right\},
 \qquad \Delta_n=\sum_{i=1}^n\partial_{x_i}^2.
\end{equation}
to be the space of harmonic polynomials of degree $d$. Each $\mathcal{H}_n^d$ is an irreducible $O(n)$-representation, and the
$\mathcal{H}_n^d$ are pairwise $O(n)$-inequivalent for different $d$. Define the space of rapidly decreasing sequences by
\begin{equation}\label{s-definition}
 s(\mathbb N_0)
 =\left\{(a_q)_{q\geq0}\in\mathbb C^{\mathbb N_0}:
 p_N(a):=\left(\sum_{q\geq0}(1+q)^{2N}|a_q|^2\right)^{1/2}<\infty
 \text{ for every }N\geq0\right\}, 
\end{equation}where $\mathbb N_0$ is the set of non-negative integers. 
It is a Fr\'echet space for the seminorms $p_N$. 

\begin{thm}\label{O(n)-Schwartz structure}
There is an $O(n)$-equivariant topological isomorphism
\begin{equation}\label{abstract}
 \mathcal S(\mathbb R^n)\simeq
 \widehat{\bigoplus}_{d\ge0}
       \bigl(\mathcal H_n^d\widehat\otimes s(\mathbb N_0)\bigr),
\end{equation}
where $O(n)$ acts trivially on $s(\mathbb N_0)$. Here the
right-hand side consists of families $v_{q,d}\in\mathcal H_n^d$
satisfying
\begin{equation}\label{rapid}
 \sum_{q,d\ge0}(1+4q+2d)^{2N}\|v_{q,d}\|^2<\infty
 \qquad(N\ge0),
\end{equation}
with the square roots of these sums as seminorms.
Consequently, the only irreducible $O(n)$-types are
$\mathcal H_n^d$, and
\begin{equation}\label{isotypic}
 \mathcal S(\mathbb R^n)[\mathcal H_n^d]
 \simeq\mathcal H_n^d\widehat\otimes s(\mathbb N_0).
\end{equation}
\end{thm}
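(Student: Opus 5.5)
The plan is to diagonalize the $n$-dimensional harmonic oscillator $\mathcal L=-\Delta_n+4\pi^2\|x\|^2$, which commutes with the right regular action of $O(n)$. We then match its spectral decomposition with the Hermite-function characterization of $\mathcal S(\mathbb R^n)$.

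First, we use the classical fact that $L^2(\mathbb R^n)$ has an orthogonal basis of Hermite functions $h_\alpha(x)=\prod_i h_{\alpha_i}(x_i)$. These are eigenvectors of $\mathcal L$ with eigenvalue $2\pi(2|\alpha|+n)$. Moreover, $f\in L^2$ lies in $\mathcal S(\mathbb R^n)$ if and only if its Hermite coefficients are rapidly decreasing, i.e.
\[
\sum_\alpha(1+|\alpha|)^{2N}|\langle f,h_\alpha\rangle|^2<\infty\qquad(N\ge0),
\]
and these sums define the Schwartz topology. This follows because $\|\mathcal L^N f\|_{L^2}$ is a fundamental system of seminorms on $\mathcal S$. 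The eigenspace of $\mathcal L$ for $|\alpha|=k$ is $\mathcal P_k e^{-\pi\|x\|^2}$, where $\mathcal P_k$ is the space of polynomials of degree $\le k$ whose top-degree part has a specified form. More usefully, we work in polar form. By the Fischer decomposition $\mathbb C[x]_k=\bigoplus_{2q+d=k}\|x\|^{2q}\mathcal H_n^d$, the eigenspace $E_k$ decomposes $O(n)$-equivariantly as
\[
E_k=\bigoplus_{2q+d=k}\{\,L_q^{(d+n/2-1)}(2\pi\|x\|^2)\,P(x)\,e^{-\pi\|x\|^2}:P\in\mathcal H_n^d\,\},
\]
where $L_q^{(a)}$ are Laguerre polynomials. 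Each summand is $O(n)$-isomorphic to $\mathcal H_n^d$, and the map $P\mapsto L_q^{(d+n/2-1)}(2\pi\|x\|^2)P\,e^{-\pi\|x\|^2}$ is $O(n)$-equivariant. To verify this eigenfunction claim, we compute $\mathcal L$ on functions $g(r^2)P(x)$ with $P$ harmonic of degree $d$; this reduces to the Laguerre ODE. Dimension counting against $\dim E_k=\binom{k+n-1}{n-1}$ and the Fischer decomposition shows these summands exhaust $E_k$.

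Next, we normalize. We choose an $O(n)$-invariant inner product on each $\mathcal H_n^d$, for example the $L^2(S^{n-1})$ norm. We then send $(v_{q,d})$ to $\sum_{q,d}c_{q,d}L_q^{(d+n/2-1)}(2\pi\|x\|^2)v_{q,d}(x)e^{-\pi\|x\|^2}$, with constants $c_{q,d}$ chosen so that each map is an isometry onto its image in $L^2(\mathbb R^n)$. The constants are computed by polar coordinates and Laguerre orthogonality. The images are mutually orthogonal: different $k$ have different $\mathcal L$-eigenvalues, and within fixed $k$ different $d$ give inequivalent $O(n)$-types. The eigenvalue of the $(q,d)$ piece is $2\pi(4q+2d+n)$. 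Hence the Schwartz seminorms $\|\mathcal L^Nf\|$ are equivalent to the sums in \eqref{rapid}, since $(4q+2d+n)^{2N}$ and $(1+4q+2d)^{2N}$ are comparable. This gives the topological isomorphism \eqref{abstract}.

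Finally, the consequences follow directly. Since each $\mathcal H_n^d$ is irreducible and the types are pairwise inequivalent, an $O(n)$-isotypic projection (integration against a character) acts blockwise. So the $\mathcal H_n^d$-isotypic part is exactly the closed span of the $q$-indexed copies, which is $\mathcal H_n^d\widehat\otimes s(\mathbb N_0)$; every irreducible type therefore occurs among the $\mathcal H_n^d$. We expect the main obstacle to be the Schwartz characterization by $\mathcal L$-seminorms, i.e. the equivalence of $\{\|\mathcal L^Nf\|_{L^2}\}$ with the usual seminorms. We will cite this as the standard Hermite expansion theorem (Simon, Reed--Simon) rather than reprove it.
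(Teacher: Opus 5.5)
Your proposal is correct and follows essentially the paper's argument. You diagonalize the harmonic oscillator by the Laguerre--harmonic eigenfunctions $H(x)L_q^{d+n/2-1}(|x|^2)e^{-|x|^2/2}$ (up to your rescaling by $2\pi$), cite the Hermite characterization $\mathcal S(\mathbb R^n)=\bigcap_N\operatorname{Dom}(L^N)$, and compare the weights $(4q+2d+n)^{2N}$ with $(1+4q+2d)^{2N}$ via Parseval. The only difference is how you get completeness of the Laguerre--harmonic system: you inherit it from the Hermite basis by a Fischer-decomposition dimension count in each finite-dimensional eigenspace $E_k$, whereas the paper uses $L^2(S^{n-1})=\widehat\bigoplus_d\mathcal H_n^d$ together with completeness of the Laguerre polynomials, so your version is slightly more self-contained and makes the match with the Hermite seminorms immediate.
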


\begin{proof}
Restriction to $S^{n-1}$ identifies $\mathcal H_n^d$ with the
eigenspace of $\Delta_{S^{n-1}}$ of eigenvalue $-d(d+n-2)$
\cite[Chapter~IV]{stein-weiss1971}. Hence
\begin{equation}\label{sphere}
 L^2(S^{n-1})=\widehat{\bigoplus}_{d\ge0}\mathcal H_n^d.
\end{equation}
The harmonic oscillator $L=-\Delta_n+|x|^2$ has a positive
self-adjoint realization on $L^2(\mathbb R^n)$ commuting with $O(n)$.
For $H\in\mathcal H_n^d$ and $q\ge0$, put
\begin{equation}\label{laguerre}
 \Psi_{q,d,H}(x)=H(x)L_q^{d+n/2-1}(|x|^2)e^{-|x|^2/2},
\end{equation}
where $L_q^\alpha(t)=t^{-\alpha}e^t(q!)^{-1}
\frac{d^q}{dt^q}(e^{-t}t^{q+\alpha})$.
The Laguerre equation
\begin{equation}\label{laguerre-equation}
 t(L_q^\alpha)''+(\alpha+1-t)(L_q^\alpha)'+qL_q^\alpha=0
\end{equation}
and the polar-coordinate formula for $\Delta_n$ give
\begin{equation}\label{eigenvalue}
 L\Psi_{q,d,H}=(4q+2d+n)\Psi_{q,d,H}.
\end{equation}
Choose an orthonormal basis $H_{d,j}$, $1\le j\le N_d=\dim\mathcal H_n^d$, and
$c_{q,d}>0$ so that $\psi_{q,d,j}=c_{q,d}\Psi_{q,d,H_{d,j}}$
has norm one. By \eqref{sphere} and completeness of the Laguerre
system, these functions form an orthonormal basis of $L^2(\mathbb R^n)$.
Thus
\begin{equation}\label{expansion}
 f=\sum_{q,d,j}a_{q,d,j}\psi_{q,d,j}.
\end{equation}
For $N\ge0$, the spectral domain $\operatorname{Dom}(L^N)$ consists
of these expansions with
$\sum_{q,d,j}(4q+2d+n)^{2N}|a_{q,d,j}|^2<\infty$.
The Hermite characterization of Schwartz space
\cite[Theorem~4.9(a), with $E=\mathbb C$]{kruse2021series} gives
\begin{equation}\label{osc-char}
 \mathcal S(\mathbb R^n)=\bigcap_{N\ge0}\operatorname{Dom}(L^N)
\end{equation}
with seminorms $\|L^Nf\|_2$. By Parseval,
\begin{equation}\label{parseval}
 \|L^Nf\|_2^2=\sum_{q,d,j}(4q+2d+n)^{2N}|a_{q,d,j}|^2.
\end{equation}
Since $1+4q+2d\le4q+2d+n\le n(1+4q+2d)$, these seminorms
are equivalent to \eqref{rapid}. The map
$f\mapsto(v_{q,d})$, $v_{q,d}=\sum_j a_{q,d,j}H_{d,j}$,
is therefore the asserted topological isomorphism. It is
$O(n)$-equivariant because the radial factors are invariant;
irreducibility and inequivalence of the harmonic spaces give
\eqref{isotypic}.
\end{proof}

Denote by $\mathcal{S}(\mathbb{R}^n)_{O(n)\textrm{-fin}}$ the subspace of $O(n)$-finite vectors in $\mathcal{S}(\mathbb{R}^n)$. We have

\begin{cor}[$O(n)$-finite Schwartz functions]\label{finite-vectors}
Define
\begin{equation}
 \mathcal{S}(\mathbb{R}_{\geq0})=
 \left\{b\in C^\infty([0,\infty)):
 \sup_{t\geq0}(1+t)^A|b^{(B)}(t)|<\infty
 \text{ for all integers }A,B\geq0\right\}.
\end{equation}
A function $f\in\mathcal{S}(\mathbb{R}^n)$ is $O(n)$-finite if and only if it can be written as a finite sum
\begin{equation}\label{finite-vector-form}
 f(x)=\sum_{\ell=1}^{M}H_\ell(x)b_\ell(|x|^2),
\end{equation}
where each $H_\ell$ is a homogeneous harmonic polynomial and each
$b_\ell\in\mathcal{S}(\mathbb{R}_{\geq0})$.  Equivalently,
\begin{equation}\label{finite-algebraic-sum}
 \mathcal{S}(\mathbb{R}^n)_{O(n)\textrm{-fin}}
 =\bigoplus_{d\geq0}^{\mathrm{alg}}
 \mathcal{S}(\mathbb{R}^n)[\mathcal{H}_n^d].
\end{equation}
\end{cor}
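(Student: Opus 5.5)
We prove Corollary~\ref{finite-vectors} from Theorem~\ref{O(n)-Schwartz structure}, arguing each inclusion of \eqref{finite-algebraic-sum} separately.

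\emph{Functions of the form \eqref{finite-vector-form} are $O(n)$-finite.} Take $f=\sum_\ell H_\ell(x)b_\ell(|x|^2)$. First, each summand is Schwartz: $x\mapsto b(|x|^2)$ is smooth because $b\in C^\infty([0,\infty))$, and its derivatives are polynomials in $x$ times derivatives of $b$ at $|x|^2$. These decay rapidly, and multiplying by a polynomial $H_\ell$ preserves this. Next, for $k\in O(n)$ we have $(kf)(x)=\sum_\ell H_\ell(xk)b_\ell(|x|^2)$, because $|x|$ is invariant. Hence the translates of each summand lie in the finite-dimensional space $\mathcal H_n^{d_\ell}\cdot b_\ell(|x|^2)$, and $f$ is $O(n)$-finite. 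This also shows each summand lies in $\mathcal S(\mathbb R^n)[\mathcal H_n^{d_\ell}]$, which gives the inclusion $\supseteq$ in \eqref{finite-algebraic-sum} once the converse below is established.

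\emph{$O(n)$-finite functions have the form \eqref{finite-vector-form}.} Let $f$ be $O(n)$-finite. The span of its translates decomposes into finitely many irreducible types, so $f$ is a finite sum of isotypic components. Each type is some $\mathcal H_n^d$ by Theorem~\ref{O(n)-Schwartz structure}, which gives \eqref{finite-algebraic-sum}. It remains to treat an element $g\in\mathcal S(\mathbb R^n)[\mathcal H_n^d]$. By \eqref{isotypic}, $g=\sum_{q,j}a_{q,j}\psi_{q,d,j}$ with rapidly decreasing coefficients. Fix the orthonormal basis $H_{d,j}$ and regroup:
\[
 g(x)=\sum_{j=1}^{N_d}H_{d,j}(x)\,b_j(|x|^2),\qquad
 b_j(t)=\sum_{q\ge0}a_{q,j}c_{q,d}L_q^{d+n/2-1}(t)e^{-t/2}.
\]
Since $N_d$ is finite, this is a finite sum of the required shape. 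The only remaining point is that each $b_j$ lies in $\mathcal S(\mathbb R_{\ge0})$.

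\emph{Main obstacle: showing $b_j\in\mathcal S(\mathbb R_{\ge0})$.} I would avoid estimating the Laguerre series directly and instead obtain $b_j$ by a projection. Choose $j$ and a point; by orthonormality of $H_{d,j}$ on the sphere (with the normalization from \eqref{sphere}), set
\[
 r^{2d}b_j(r^2)=\int_{S^{n-1}}g(r\omega)\overline{H_{d,j}(\omega)}\,d\omega
\]
up to a positive constant. Denote the right-hand side by $G(r)$, for $r\ge0$.

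\begin{enumerate}[(a)]
\item $G$ extends to a smooth even function of $r\in\mathbb R$, since $\omega\mapsto-\omega$ multiplies $H_{d,j}$ by $(-1)^d$. All of its derivatives decay rapidly, by differentiating under the integral.
\item Integrating $H_{d,j}$ against the Taylor polynomial of $g$ kills every homogeneous component of degree less than $d$. Harmonic polynomials of degree $d$ are orthogonal to all polynomials of lower degree on the sphere, so $G$ vanishes to order $d$ at $0$.
\item A smooth even function vanishing to order $d$ at $0$ equals $r^{2d}h(r^2)$ with $h$ smooth on $[0,\infty)$. This uses Whitney's theorem on even functions and Hadamard's lemma applied $d$ times; the rapid decay of $h$ and of its derivatives for $t\ge1$ follows from that of $G$.
\end{enumerate}

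It follows that $b_j=h$ up to a constant, so $b_j\in\mathcal S(\mathbb R_{\ge0})$. The delicate step is (c), the smoothness of $h$ at $t=0$, and Whitney's even-function theorem is the tool I would cite for it.
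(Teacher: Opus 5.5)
Your strategy is sound and differs from the paper's, but steps (a)--(c) contain an exponent and parity error, and the lemma in (c) is false as written.

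\textbf{The error.} Since $H_{d,j}$ is homogeneous of degree $d$, you have $g(r\omega)=r^d\sum_{j'}H_{d,j'}(\omega)b_{j'}(r^2)$. Hence $G(r)=\int_{S^{n-1}}g(r\omega)\overline{H_{d,j}(\omega)}\,d\omega=\kappa\,r^{d}b_j(r^2)$ for a constant $\kappa>0$, not $\kappa\,r^{2d}b_j(r^2)$. Your own substitution $\omega\mapsto-\omega$ gives $G(-r)=(-1)^dG(r)$, so $G$ is odd when $d$ is odd. The lemma in (c) also fails: $r^2$ is even and vanishes to order $d=2$, yet it is not $r^4h(r^2)$ with $h$ smooth at $0$.

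\textbf{The repair.} By Hadamard's lemma, $G(r)=r^d\widetilde G(r)$ with $\widetilde G$ smooth. Then $\widetilde G$ is even, because $G$ and $r^d$ have the same parity. So $\widetilde G(r)=h(r^2)$ with $h\in C^\infty([0,\infty))$, either by Whitney's theorem or by the elementary induction $h'(t)=\frac12\bigl(\widetilde G'(r)/r\bigr)|_{r=\sqrt t}$, where $\widetilde G'(r)/r$ is again smooth and even. Equivalently, integrate against $\overline{H_{d,j}(r\omega)}$ instead; this gives the even function $\kappa r^{2d}b_j(r^2)$, which vanishes to order $2d$. Decay on $[1,\infty)$ then follows from $h(t)=t^{-d/2}G(\sqrt t)$, as you indicate.

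\textbf{Identifying $b_j$ with $h$.} This step needs a word of justification. The Laguerre expansion of $g$ converges in $\mathcal S(\mathbb R^n)$ by the topological isomorphism of Theorem~\ref{O(n)-Schwartz structure}, hence uniformly on spheres, so the sphere integral may be taken termwise. It is simpler to drop the series altogether. For each $r$, the function $\omega\mapsto g(r\omega)$ lies in the $\mathcal H_n^d$-isotypic part of $L^2(S^{n-1})$, which is $\mathcal H_n^d$ itself by \eqref{sphere}. Hence $g(x)=\kappa^{-1}\sum_j H_{d,j}(x)\,|x|^{-d}G_j(|x|)$ directly, where $G_j$ denotes the projection onto $H_{d,j}$.

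\textbf{Comparison with the paper.} With these corrections your proof works, and it is more elementary than the paper's. The paper keeps the radial profile as the series $b_{d,j}(t)=\sum_q c_{q,d}a_{q,d,j}L_q^{\alpha}(t)e^{-t/2}$. It then cites Dur\'an's generalized Laguerre expansion theorem to get convergence in every seminorm of $\mathcal S(\mathbb R_{\ge0})$. That buys an explicit expansion of $b_{d,j}$, at the price of a nontrivial external result on Laguerre series. Your route uses instead:
\begin{enumerate}[(i)]
\item restriction to spheres;
\item orthogonality of degree-$d$ harmonics to lower-degree polynomials;
\item Hadamard's lemma;
\item the even-function theorem.
\end{enumerate}
It needs Theorem~\ref{O(n)-Schwartz structure} only to know that every type is some $\mathcal H_n^d$, and it even shows that $b_j$ extends smoothly across $0$. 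Your finite isotypic decomposition and your converse direction agree with the paper's argument.
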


\begin{proof}
If $\Phi$ is $O(n)$-finite, its translates span a finite-dimensional
space $\mathcal S_\Phi$. The isotypic projection
$P_d\Phi=(\dim\mathcal H_n^d)\int_{O(n)}\overline{\chi_d(k)}R(k)\Phi\,dk$,
where $\chi_d$ is the character of $\mathcal H_n^d$,
belongs to $\mathcal S_\Phi$. The nonzero projections are linearly
independent, so only finitely many occur and $\Phi=\sum_{d\in S}P_d\Phi$
for a finite set $S$.

For each $d\in S$, use the basis above and put $\alpha=d+n/2-1$.
Dur\'an's generalized Laguerre expansion theorem
\cite[Theorem~D, p.~739]{duran1989stieltjes}, together with
\eqref{rapid}, gives convergence in every Schwartz seminorm of
\[
 b_{d,j}(t)=\sum_{q\ge0}c_{q,d}a_{q,d,j}L_q^\alpha(t)e^{-t/2}
 \in\mathcal S(\mathbb R_{\ge0}).
\]
Consequently,
\begin{equation}\label{fixed-type-radial-form}
 P_d\Phi(x)=\sum_{j=1}^{N_d}H_{d,j}(x)b_{d,j}(|x|^2).
\end{equation}
Summing over $S$ proves \eqref{finite-vector-form}.
Conversely, the translates of each $H_\ell(x)b_\ell(|x|^2)$ lie in
the finite-dimensional space
$\{H(x)b_\ell(|x|^2):H\in\mathcal H_n^{\deg H_\ell}\}$.
Thus any finite sum in \eqref{finite-vector-form} is $O(n)$-finite,
and \eqref{finite-algebraic-sum} follows.
\end{proof}

\section{Cohomological test vectors for Archimedean Asai L-functions}
We regard $G_n(\mathbb C)$ as a real reductive group, and write
$\mathfrak g$ for its complexified real Lie algebra.
By abuse of notation, an irreducible Casselman--Wallach representation
is identified with its Harish--Chandra module when taking relative
Lie algebra cohomology.
\begin{defn}
Let $F$ be a finite-dimensional algebraic representation of
\[
 (\operatorname{Res}_{\mathbb C/\mathbb R}G_n)_{\mathbb C}
 \simeq G_n(\mathbb C)\times G_n(\mathbb C).
\]
An irreducible representation $\pi$ is \emph{cohomological with
coefficients in $F$} if
$H^\bullet(\mathfrak g,K;\pi\otimes F^\vee)\ne0$.
In this paper a \emph{cohomological test vector} means a vector in the
minimal $K$-type of such a representation.
\end{defn}
The coefficient systems used below are specified by a dominant
integral weight $\lambda=(\lambda_1,\ldots,\lambda_n)$ and an integer
$m$.  Put
\[
 \lambda^*=(m-\lambda_n,\ldots,m-\lambda_1),\qquad
 F_{\lambda,m}(g)=\tau_\lambda(g)\otimes\tau_{\lambda^*}(\bar g).
\]
Thus the two highest weights satisfy
$\lambda_i+\lambda^*_{n+1-i}=m$.
We use this pure family of coefficient systems and the essentially
tempered principal series specified in the next proposition.

\subsection{Cohomological principal series and their minimal \texorpdfstring{$K$}{K}-types.}
\begin{prop}\label{cohfamily}
Put $a_i=\lambda_i+(n+1-2i)/2$.  With normalized induction, the
principal series
\[
 \pi=\Ind_{B_n(\mathbb C)}^{G_n(\mathbb C)}
       (\mu_1\boxtimes\cdots\boxtimes\mu_n),\qquad
 \mu_i(z)=z^{a_i}\bar z^{\,m-a_i},
\]
is irreducible, essentially tempered, generic, and cohomological
with coefficients in $F_{\lambda,m}$.
Here the displayed character means
$|z|^m(z/|z|)^{2a_i-m}$, so it is single-valued.
\end{prop}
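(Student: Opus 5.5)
The plan is to check the four properties in turn, working directly with the characters $\mu_i$ and the standard criteria for principal series of $\GL_n(\mathbb C)$. We first rewrite $\mu_i(z)=z^{a_i}\bar z^{\,m-a_i}$ in the form $|z|_{\mathbb C}^{m/2}(z/|z|)^{2a_i-m}$. This makes sense because $2a_i=2\lambda_i+n+1-2i$ is an integer, and $z^{p}\bar z^{q}$ with $p-q\in\mathbb Z$ is interpreted as $|z|^{p+q}(z/|z|)^{p-q}$. Comparing with the notation of Theorem~\ref{intro:main}, $2a_i-m=d_i$, so $\pi$ is the representation appearing there, with $\nu_i=m/2$ for all $i$.

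\emph{Irreducibility and genericity.} We invoke the classical criterion (Želobenko; Wallach; or the $\GL_n(\mathbb C)$ case of Speh--Vogan): $\Ind(\mu_1\boxtimes\cdots\boxtimes\mu_n)$ is irreducible unless, for some $i\neq j$, the quotient $\mu_i\mu_j^{-1}$ is of the form $z^p\bar z^q$ with $p,q$ integers that are both positive or both negative. Here $\mu_i\mu_j^{-1}=z^{a_i-a_j}\bar z^{\,a_j-a_i}$ has exponents $p=-q$, so this never happens; in fact $p=-q\neq0$, since the $a_i$ strictly decrease. Hence $\pi$ is irreducible. Genericity follows because the Jacquet integral defines a nonzero Whittaker functional on every principal series, and this functional is nonzero on the irreducible space $\pi$. \emph{Essential temperedness.} The character $|\cdot|_{\mathbb C}^{-m/2}\otimes\mu_i$ is unitary. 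Unitary parabolic induction from a unitary character of the torus gives a tempered representation, so $\pi\otimes|\det|_{\mathbb C}^{-m/2}$ is tempered.

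\emph{Cohomology.} This is the main step. We compute the infinitesimal character of $\pi$ as a $(\mathfrak g,K)$-module with $\mathfrak g\simeq\mathfrak{gl}_n\times\mathfrak{gl}_n$. The holomorphic and antiholomorphic parameters are $(a_1,\dots,a_n)$ and $(m-a_1,\dots,m-a_n)$. On the other side, $F_{\lambda,m}$ has infinitesimal character $\lambda+\rho$ in the first factor and $\lambda^*+\rho$ in the second factor (up to $W$). Since $a_i=\lambda_i+\rho_i$ with $\rho_i=(n+1-2i)/2$, the first parameters agree. For the second, $\lambda^*_{n+1-i}+\rho_{n+1-i}=m-\lambda_i-\rho_i=m-a_i$, so they agree up to a Weyl permutation. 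Thus $\pi$ and $F_{\lambda,m}$ have the same infinitesimal character, which is the necessary Wigner condition.

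For nonvanishing, we use Vogan--Zuckerman, or more directly Clozel's result for $\GL_n(\mathbb C)$. There the cohomological tempered representation with coefficients $F$ is the unitary-twisted induction $\Ind(z^{\ell_i}\bar z^{-\ell_i})$, and it has $H^q\neq0$ in degrees $q\in[\binom{n+1}{2}-n,\binom{n}{2}+\ldots]$, concretely with $\bigwedge^\bullet\mathbb C^{n-1}$ multiplicity. Equivalently, we check that the minimal $K$-type $\tau$ of $\pi$, which has highest weight $(d_1,\dots,d_n)$ by the Želobenko description, matches the $K$-type of $\bigwedge^{\binom n2}\mathfrak p$ tensored with $F^\vee$. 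This matching is the Vogan--Zuckerman condition for the $A_{\mathfrak q}(\lambda)$ with $\mathfrak q$ a Borel (a $\theta$-stable Borel gives the fundamental series, here the principal series).

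The step I expect to be the main obstacle is the bookkeeping of the antiholomorphic factor. This means checking that the involution $g\mapsto\bar g$ and the choice $\lambda^*$ produce the parameter $m-a_i$ rather than $a_i-m$, and confirming that no additional condition (such as purity, $\lambda_i+\lambda^*_{n+1-i}=m$) is needed beyond what is built into the definition. Once these parameters match, Clozel's lemma gives $H^\bullet(\mathfrak g,K;\pi\otimes F_{\lambda,m}^\vee)\ne0$.
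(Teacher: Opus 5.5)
Your argument is correct, but it takes a different route from the paper. The paper's proof is a single citation: it identifies $\pi$ with the representation $J_\mu$ of \cite[Section~2.4.2, (2.13)]{raghuram2016critical} for the weights $(\lambda,\lambda^*)$, where irreducibility, essential temperedness, genericity and nonvanishing cohomology are all recorded.

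You instead assemble the four properties from general principles:
\begin{itemize}
\item Želobenko's criterion for irreducibility, since here $\mu_i\mu_j^{-1}=z^{p}\bar z^{-p}$ with $p=a_i-a_j$;
\item Jacquet's Whittaker functional for genericity;
\item unitary induction from unitary characters for essential temperedness;
\item Wigner's lemma together with Clozel's lemma (or Vogan--Zuckerman) for cohomology.
\end{itemize}

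Your route shows what each hypothesis does. The common exponent $m$ forces the antiholomorphic exponent of $\mu_i\mu_j^{-1}$ to be $-p$, which gives irreducibility. Strict decrease of the $a_i$ makes the infinitesimal character regular. Purity $\lambda_i+\lambda^*_{n+1-i}=m$ is exactly what makes the characters $\mu_i$ the essentially tempered arrangement of the antiholomorphic parameters with the infinitesimal character of $F_{\lambda,m}$. Your route also exhibits $d=2\lambda-m+2\rho$, the minimal $K$-type the paper uses later. The paper's citation buys brevity and delegates all conventions to one source. At the decisive cohomology step your version is still a citation, so it is not more self-contained there.

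One correction to your $K$-type check. After removing the common central twist, Kuga's lemma gives $H^q(\mathfrak g,K;\pi\otimes F^\vee)\simeq\operatorname{Hom}_K(\bigwedge^q\mathfrak p\otimes F,\pi)$. So you need $\tau_d\subset\bigwedge^{\binom n2}\mathfrak p\otimes F$, not $\otimes F^\vee$. The centre of $U(n)$ acts on $\bigwedge^{\bullet}\mathfrak p\otimes F^\vee$ by $z^{nm-2|\lambda|}$ but on $\tau_d$ by $z^{2|\lambda|-nm}$, so these differ unless $2|\lambda|=nm$.

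With $F$ the check works. Since $\bar k=k^{-t}$ on $U(n)$, one has $F|_K\simeq\tau_\lambda\otimes\tau_{\lambda-m}$, whose Cartan component has highest weight $2\lambda-m$. Adding the weight $2\rho$ of $\bigwedge^{\binom n2}\mathfrak n\subset\bigwedge^{\binom n2}\mathfrak p$ gives exactly $d$.

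Your displayed degree range is also incomplete; modulo the split centre it is $[\binom n2,\binom{n+1}2-1]$. Nothing in the argument depends on it.
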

\begin{proof}
This is the representation $J_\mu$ of
\cite[Section~2.4.2, equation~(2.13)]{raghuram2016critical},
for the pair of coefficient weights $(\lambda,\lambda^*)$.
That section states its irreducibility, essential temperedness,
genericity and nonzero cohomology.
\end{proof}

 For $l\in \mathbb{Z}$ and $z\in \mathbb{C}^\times$, set $\chi_l(z)=\left(\frac{z}{\vert z\vert}\right)^l$, where $\vert z\vert=\vert z\vert_\C^{1/2}=(z\bar{z})^{1/2}$. Given $d=(d_1,\cdots, d_n)\in \mathbb{Z}^n$ and $\nu=(\nu_1,\cdots, \nu_n)\in \mathbb{C}^n$,
define a character $\chi_d$ and $\eta_\nu$ on $A_n$ by
\[\chi_d(a)=\prod_{i=1}^n\chi_{d_i}(a_i),\quad \eta_\nu(a)=\prod_{i=1}^n\vert a_i\vert_{\mathbb C}^{\nu_i}\] Then \[\mu_i(z)=\eta_{\frac{m}{2}}(z)\chi_{d_i}(z), \quad \textrm{where}\quad d_i=2\lambda_i+(n+1-2i)-m.\] 
In other words, the cohomological principal series representation
\begin{equation}\label{CohPS}
    \pi\simeq \mathrm{Ind}_{B(\mathbb{C})}^{G(\mathbb{C})}\vert \cdot \vert_\mathbb{C}^{\frac{m}{2}}\chi_{d_1}\boxtimes\vert\cdot\vert_{\mathbb{C}}^{\frac{m}{2}}\chi_{d_2}\boxtimes\cdots\boxtimes \vert \cdot\vert_{\mathbb{C}}^{\frac{m}{2}}\chi_{d_n}
\end{equation}

As the sequence $(\lambda_i)_{1\le i\le n}$ is decreasing, the sequence $(d_i)_{1\le i\le n}$ is strictly decreasing. Moreover, observe that
\begin{equation}\label{eq:parity}
 d_i\equiv n+1-m\pmod 2\quad(1\leq i\leq n).
\end{equation}
Let $\epsilon\in\{0,1\}$ be this common parity. For later use put 
\begin{equation}\label{eq:ri}
 r_i=\frac{d_i-d_{i+1}}2
 =\lambda_i-\lambda_{i+1}+1\geq1,
 \qquad 1\leq i<n.
\end{equation}

\begin{equation}\label{eq: Ri}
R_i=r_i+r_{i+1}+\cdots+r_{n-1},
 \qquad R_n=0.
\end{equation}

Restricting the induced functions to $K=U(n)$ identifies
$\pi|_K$ with
\begin{equation}\label{torus-induction}
I(d):=\mathrm{Ind}_{A_n(\mathbb C)\cap K}^{K}\chi_d.
\end{equation}
By Frobenius reciprocity, the unique minimal
$K$-type is $\tau=\tau_d|_K$ with multiplicity one, where $\tau_d$ is the highest weight representation of $G_n(\mathbb{C})$ of dominant weight $d$.
By irreducibility of $\pi$, every nonzero continuous Whittaker
functional $\Lambda\in\operatorname{Hom}_{N_n(\mathbb C)}(\pi,\psi)$
defines an injective map $v\mapsto W_v$, where
$W_v(g)=\Lambda(\pi(g)v)$, so every nonzero vector of the
minimal $K$-type gives a nonzero Whittaker function. For $\mu_i=\chi_{d_i}|\cdot|_{\mathbb C}^{\nu_i}$, put $\mu=\otimes_i\mu_i$. A section $f$ in the normalized
induction satisfies
\begin{equation}\label{read:induction}
 f(bg)=\delta_{B_n(\mathbb C)}(b)^{1/2}\mu(b)f(g).
\end{equation}
  Let $du=\prod_{i<j}d\Re u_{ij}\,d\Im u_{ij}$ be the measure on $N_n(\mathbb{C})$, where $d\Re u_{ij}$ and $d\Im u_{ij}$ are real Lebesgue measures on $\mathbb{R}$.
  
The following standard properties on Whittaker functions are recalled in \cite[\S\S3.3 and~7.2]{jacquet2009}.
\begin{lem}\label{Jacquet-integral}
The Jacquet integral 
\begin{equation}\label{read:Jacquet}
 W_{f_{d,\nu}}(g)=\int_{N_n(\mathbb C)}
       f_{d,\nu}(\dot w_nug)\psi^{-1}(u)\,du,
\end{equation}
initially defined in a convergence chamber, extends holomorphically
in $\nu$ for holomorphic families of sections in the fixed compact
model. Evaluation at $g=1$ defines a nonzero continuous Whittaker
functional for every $\nu$. For the irreducible cohomological
representation $\pi$, every nonzero section gives a nonzero
Whittaker function.
\end{lem}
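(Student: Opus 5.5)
The plan is to treat the lemma as a packaging of results from \cite[\S\S3.3 and~7.2]{jacquet2009}, and to supply only the short deductions linking them to the normalization \eqref{read:induction} and to the measure $du$ fixed above. I would take four steps in order: convergence, holomorphic continuation, nonvanishing of the functional, and injectivity on the irreducible $\pi$.

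\textbf{Convergence.} Restrict a section in the compact model to $K$. For $u\in N_n(\mathbb C)$, write the Iwasawa decomposition $\dot w_nu=n(u)a(u)k(u)$. Then
\[
 |f_{d,\nu}(\dot w_nug)|\le \|f|_K\|_\infty\,\delta_{B_n(\mathbb C)}^{1/2}\eta_{\Re\nu}(a(ug)).
\]
The right side is the integrand of the Gindikin--Karpelevich integral. Factoring $w_n$ into simple reflections reduces this integral to rank-one integrals of the form $\int_{\mathbb C}(1+|x|^2)^{-(1+\Re(\nu_i-\nu_j))}\,dx$. These converge when $\Re(\nu_i-\nu_j)>0$ for all $i<j$, which gives the convergence chamber and local uniformity in $g$ and $\nu$. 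Within the chamber, holomorphy in $\nu$ for a holomorphic family of sections then follows by differentiating under the integral. The $N_n(\mathbb C)$-equivariance $W(ug)=\psi(u)W(g)$ follows from the change of variables $u\mapsto uu_0^{-1}$, since $\dot w_n N_n\dot w_n^{-1}$ normalizes in the required way and $\psi$ is a character.

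\textbf{Holomorphic continuation (main obstacle).} I would follow Jacquet's approach. He exhibits Godement sections, built from $\Phi\in\mathcal S(M_n(\mathbb C))$ and a lower-rank section, whose Jacquet integrals are computed by unfolding. These integrals are expressed through a Jacquet integral in rank $n-1$ together with Tate-type integrals that are entire in $\nu$. Induction on $n$ then gives the continuation for these sections. A density and uniformity argument extends it to all holomorphic families in the compact model: Jacquet shows that the span of such sections, over holomorphic coefficients, exhausts the $K$-finite families, and continuity in the Fréchet topology is controlled by seminorm bounds that are uniform in $\nu$ on compacta. The step I expect to be hardest is exactly this uniform control on compacta away from the chamber, and I would quote it from \cite[\S3.3]{jacquet2009} rather than reprove it.

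\textbf{Nonvanishing for every $\nu$.} Jacquet's Godement-section computation \cite[\S7.2]{jacquet2009} yields, for each $\nu$, a section with $W_f(1)$ equal to a nonzero explicit product. Hence evaluation at $g=1$ is a nonzero functional for every $\nu$. It is continuous because the continued map $f\mapsto W_f(1)$ is continuous in the Casselman--Wallach topology.

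\textbf{Injectivity on the irreducible $\pi$.} For the irreducible cohomological $\pi$ of Proposition~\ref{cohfamily}, consider
\[
 \{f: W_f\equiv0\}=\{f:\Lambda(\pi(g)f)=0\ \forall g\}.
\]
This is a closed $G_n(\mathbb C)$-invariant subspace. It is proper because $\Lambda\neq0$, so irreducibility forces it to be zero. Thus every nonzero section gives a nonzero Whittaker function.
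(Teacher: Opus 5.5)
Your proposal is correct and follows the paper's route. The paper gives no independent argument for convergence, holomorphic continuation, or nonvanishing of the Jacquet functional; it quotes \cite[\S\S3.3 and~7.2]{jacquet2009} for these, and it deduces the final assertion exactly as in your last step, because the kernel of $f\mapsto W_f$ is a closed proper $G_n(\mathbb C)$-invariant subspace of the irreducible $\pi$.

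There are two harmless slips. First, the substitution $u\mapsto uu_0^{-1}$ alone gives $W(u_0g)=\psi(u_0)W(g)$, so the remark on $\dot w_nN_n\dot w_n^{-1}$ is not needed. Second, nonvanishing at every $\nu$ should be credited to the citation rather than to an explicit Godement-section value. Gaussian Godement sections as in Proposition~\ref{pub:Godement-identity} are given by a convergent integral only in a half-space of $\nu$, so they do not directly give nonvanishing at every $\nu$.
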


Throughout, \eqref{read:Jacquet} denotes its analytic continuation when
the integral is not absolutely convergent. It will be specialized to
$\nu_i=m/2$ for cohomological principal series in this paper.

We have the Iwasawa decomposition
$G_n(\mathbb R)=N_n(\mathbb R)A_n(\mathbb R)^+O(n)$, where $A_n(\mathbb{R})^+=\{\mathrm{diag}(a_1,\cdots,a_n): a_i\in \mathbb{R}_+^\times\}$.
Fix the Haar measure $dk$ on $O(n)$ so that $\mathrm{Vol}(O(n), dk)=1$. Choose the Lebesgue measure
$dn$ for $N_n(\mathbb R)$ and
$d^\times a=\prod_{i=1}^n da_i/a_i$ for $A_n(\mathbb R)^+$.
We normalize $dg$ by
\begin{equation}\label{eq:Iwasawa-measure}
    \int_{G_n(\mathbb R)} f(g)\,dg
 =\int_{N_n(\mathbb R)}\int_{A_n(\mathbb R)^+}
   \int_{O(n)} \delta_{B_n(\mathbb R)}(a)^{-1}f(nak)
   \,dk\,d^\times a\,dn .
\end{equation}
Since $\psi$ is trivial on $N_n(\mathbb R)$ and $e_n$ is fixed
by $N_n(\mathbb{R})$, the Flicker integral is
\[
 Z(s,W,\Phi)=\int_{A_n(\mathbb R)^+}
  \left(\int_{O(n)}W(ak)\Phi(e_nak)\,dk\right)
  (\det a)^s\delta_{B_n(\mathbb R)}(a)^{-1}\,d^\times a.
\]
Embed $O(n-1)$ in $O(n)$ by $h\mapsto\operatorname{diag}(h,1)$. Put $P_{O(n-1)}v=\int_{O(n-1)}\tau(h)v\,dh$, with $\mathrm{Vol}(O(n-1),dh)=1$. Write $\tau[\mathcal H_n^\ell]$ for the $O(n)$-isotypic
component of type $\mathcal H_n^\ell$.
\begin{lem}\label{v-lie-in-O(n-1)-inv}
Assume $v$ lies in the minimal $K$-type $\tau$ of $\pi$. Suppose $Z(s,W_v,\Phi)\neq 0$, then $ \tau^{O(n-1)}\ne0$.
If, in addition, $\Phi$ is right $O(n-1)$-invariant, then there exists $v\in \tau$ such that $0\ne P_{O(n-1)}v\in\tau^{O(n-1)}$ and
\[
 Z(s,W_{P_{O(n-1)}v},\Phi)
 =Z(s,W_v,\Phi).
\]Consequently, we may assume that the test vector $0\ne v\in \tau^{O(n-1)}$.
\end{lem}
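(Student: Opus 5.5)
The plan is to average over $O(n-1)$ inside the Flicker integral, using the Iwasawa form of $Z(s,W,\Phi)$ displayed just before the lemma:
\[
 Z(s,W_v,\Phi)=\int_{A_n(\mathbb R)^+}\Bigl(\int_{O(n)}W_v(ak)\Phi(e_nak)\,dk\Bigr)(\det a)^s\delta_{B_n(\mathbb R)}(a)^{-1}\,d^\times a .
\]
The key observation is that $e_n a=a_n e_n$ for diagonal $a$, and that $e_n\operatorname{diag}(h,1)=e_n$ for $h\in O(n-1)$. Hence $\Phi(e_nak)=\Phi(a_ne_nk)$ depends only on the coset $O(n-1)k$. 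For $h\in O(n-1)$, substitute $k\mapsto hk$ in the inner integral; left invariance of $dk$ permits this. Then $\Phi(e_nahk)=\Phi(e_nak)$, while $W_v(ahk)=W_{\pi(h)\pi(k)v}(a)$.

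Next, average over $h\in O(n-1)$ with the probability measure $dh$ and use Fubini. All integrals are absolutely convergent for $\Re s$ large: $O(n-1)$ is compact, and the bounds on $W$ are uniform on compact sets. After averaging, the inner integral becomes
\[
\int_{O(n)}\Bigl(\int_{O(n-1)}W_{\pi(h)\pi(k)v}(a)\,dh\Bigr)\Phi(e_nak)\,dk.
\]
By linearity and continuity of the Whittaker functional, the inner $h$-integral equals $W_{P_{O(n-1)}\pi(k)v}(a)$.

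This already gives the first claim. If $\tau^{O(n-1)}=0$, then $P_{O(n-1)}$ annihilates $\tau$. Since $\tau$ is $K$-stable, $\pi(k)v\in\tau$, so the integrand vanishes for every $k$, and therefore $Z(s,W_v,\Phi)=0$ for $\Re s$ large. By meromorphic continuation (Theorem~\ref{read:BP-Asai}(i)) it vanishes identically, a contradiction. The same computation needs no invariance assumption on $\Phi$ beyond the automatic $O(n-1)$-invariance of $k\mapsto\Phi(e_nk)$ under left translation.

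For the second claim, assume $\Phi$ is right $O(n-1)$-invariant. I would then run the same averaging trick on the right instead, substituting $k\mapsto kh$, which is allowed since $dk$ is also right invariant. Right invariance gives $\Phi(e_nakh)=\Phi(e_nak)$, and $W_v(akh)=W_{\pi(k)\pi(h)v}(a)$. Averaging over $h$ yields $Z(s,W_v,\Phi)=Z(s,W_{P_{O(n-1)}v},\Phi)$ directly, first for $\Re s$ large and then everywhere by continuation.

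To finish, take $v$ with $Z(s,W_v,\Phi)\ne0$. The identity forces $Z(s,W_{P_{O(n-1)}v},\Phi)\neq0$, hence $P_{O(n-1)}v\ne0$, and $P_{O(n-1)}v\in\tau^{O(n-1)}$ because $P_{O(n-1)}$ is the projection onto invariants. Replacing $v$ by $P_{O(n-1)}v$ gives the final sentence of the lemma.

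The only delicate point is justifying the interchange of the $h$-integral with the Whittaker functional and with the $a$-integral. I would handle this through continuity of $\Lambda$ on $\pi$ together with compactness of $O(n-1)$ and the absolute convergence in Theorem~\ref{read:BP-Asai}(i). Since $\tau$ is finite-dimensional, $P_{O(n-1)}$ is in fact a finite linear combination, so this interchange is essentially trivial.
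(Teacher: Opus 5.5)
Your proof is correct. For the second assertion it is the paper's argument: right averaging over $O(n-1)$ via the right invariance of $\Phi$, which is the paper's substitution $g\mapsto gh^{-1}$. For the first assertion you take a different route.

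The paper argues abstractly. It notes that $k\mapsto\Phi(e_nak)$ lies in $\mathrm{Ind}_{O(n-1)}^{O(n)}\mathbf 1$, so by Schur orthogonality a nonzero inner integral forces $\Hom_{O(n)}(\tau,\mathrm{Ind}_{O(n-1)}^{O(n)}\mathbf 1)\neq0$. Frobenius reciprocity and self-duality of $\tau|_{O(n)}$ then give $\tau^{O(n-1)}\neq0$.

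You instead left-average, using the automatic left $O(n-1)$-invariance of $k\mapsto\Phi(e_nak)$. This gives the explicit identity $\int_{O(n)}W_v(ak)\Phi(e_nak)\,dk=\int_{O(n)}W_{P_{O(n-1)}\tau(k)v}(a)\Phi(e_nak)\,dk$, valid for every $a$ and every $\Phi$. It is Frobenius reciprocity in concrete form and needs no self-duality step. It also makes the vanishing when $\tau^{O(n-1)}=0$ immediate.

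Your ending is slightly sharper than the paper's. You show $P_{O(n-1)}v\neq0$ for the very vector $v$ with $Z(s,W_v,\Phi)\neq0$. The paper only infers that some $v$ with $P_{O(n-1)}v\neq0$ exists, from $\tau^{O(n-1)}\neq0$.

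Your concern about interchanging integrals can be dropped entirely. Both identities hold pointwise in $a$, and for fixed $a$ the integrand is continuous on the compact group $O(n-1)\times O(n)$. So Fubini is trivial there, and the $a$-integral is never involved.
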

\begin{proof}
For fixed positive diagonal $a$, the function
 $k\mapsto\Phi(e_nak)
 \in
 \mathrm{Ind}_{O(n-1)}^{O(n)}\mathbf1$,
since $e_nah=e_na$ for $h\in O(n-1)$.
Nonvanishing of $Z(s,W_v,\Phi)$ requires
$I_{v,\Phi}(a):=\int_{O(n)}W_v(ak)\Phi(e_nak)\,dk\ne0$
for some $a$. By Schur orthogonality and Frobenius reciprocity,
\[
 0\ne\Hom_{O(n)}
 \bigl(\tau,\mathrm{Ind}_{O(n-1)}^{O(n)}\mathbf1\bigr)
 \simeq\Hom_{O(n-1)}(\tau|_{O(n-1)},\mathbf1).
\]
Since $\tau|_{O(n)}$ is self-dual,
$\Hom_{O(n-1)}(\tau|_{O(n-1)},\mathbf1)
 \simeq\tau^{O(n-1)}$.
Thus $\tau^{O(n-1)}\ne0$, proving the first assertion.
Suppose $\Phi(xh)=\Phi(x)$ for $h\in O(n-1)$. Since $\operatorname{vol}(O(n-1))=1$, by linearity in $v$ and
the substitution $g\mapsto gh^{-1}$, we have
\[
 Z(s,W_{P_{O(n-1)}v},\Phi)
 =\int_{O(n-1)}Z(s,W_{\tau(h)v},\Phi)\,dh
 =Z(s,W_v,\Phi).
\]
As $\tau^{O(n-1)}\ne0$, there exists $v\in\tau$ such that $0\ne P_{O(n-1)}v\in \tau^{O(n-1)}$. Replace $v$ by $P_{O(n-1)}v$ we may therefore assume $0\ne v\in\tau^{O(n-1)}$.
\end{proof}

\subsection{\texorpdfstring{$O(n-1)$}{O(n-1)}-invariant subspace of the minimal \texorpdfstring{$K$}{K}-type of cohomological principal series}
For the $O(n-1)$-invariant Schwartz functions used below,
Lemma~\ref{v-lie-in-O(n-1)-inv} forces us to choose the test vector
in the $O(n-1)$-fixed subspace of the minimal $K$-type. In this section, we study the structure of the $O(n-1)$-invariant part of the minimal $K$-type using branching rules, and specify our choice of the cohomological test vector as well as the Schwartz function which are used later to compute the Flicker integral. 

The minimal $K$-type of (\ref{CohPS}) can be written in terms of Schur modules as 
\begin{equation}\label{eq:tau}
 \tau\simeq\mathbb{S}_{\eta}(\mathbb{C}^n)\otimes {\det}(\cdot)^{d_n},\quad \mathrm{\ where\ } \eta=(2R_1,2R_2,\ldots,2R_{n-1},0).
\end{equation} Here $R_i$ is defined in (\ref{eq: Ri}). Since $d_n=2a+\epsilon$ for some $a\in\mathbb{Z}$, the restriction of
$\det^{2a}$ to $O(n)$ is trivial.  Thus the action of $O(n)$ on
$\tau$ is the same as its action on $\mathbb{S}_{\eta^{(\epsilon)}}(\mathbb{C}^n)$, where
\[\eta^{(\epsilon)}
 :=\eta+\epsilon(1^n)
 =(2R_1+\epsilon,\ldots,2R_n+\epsilon).\] 

Given a partition $\lambda=(\lambda_1,\cdots,\lambda_n)$, 
the branching law for the pair
$(U(n), U(n-1))$ gives
\begin{equation}\label{eq:unitaryBranching}
 \mathbb{S}_\lambda(\mathbb{C}^n)|_{U(n-1)}
=\bigoplus_{\beta}\mathbb{S}_\beta(\mathbb{C}^{n-1}),
\end{equation}
where every summand occurs with multiplicity one and the weight $\beta=(\beta_1,\cdots,\beta_{n-1})$ satisfies the interlacing relationship:
\begin{equation}\label{eq:interlace}
\lambda_1\geq\beta_1\geq\lambda_2
 \geq\cdots\geq
 \lambda_{n-1}\geq\beta_{n-1}\geq\lambda_n.
\end{equation}
By \cite[Chapter~VII, \S3, (3.14)]{macdonald1995},
\begin{equation}\label{eq:CH}
 \dim \mathbb S_\beta(\mathbb C^{n-1})^{O(n-1)}
 =\begin{cases}
 1,&\text{if every }\beta_i\text{ is even},\\
 0,&\text{otherwise}.
 \end{cases}
\end{equation} Apply this to $\lambda=\eta^{(\epsilon)}$, then $\lambda_i=2R_i+\epsilon$.
Set $t_i=2R_i+\epsilon-\beta_i$, then \eqref{eq:interlace} and \eqref{eq:CH} imply
$t_i\equiv\epsilon\pmod2$. 
Alternatively, we may write
$t_i=2j_i+\epsilon$,
 $0\leq j_i\leq r_i-\epsilon$
 $1\leq i<n$, and
define
\begin{equation}\label{eq:indexSet}
 \mathcal J_{n,\epsilon}
 =\left\{\mathbf j=(j_1,\ldots,j_{n-1}):
 0\leq j_i\leq r_i-\epsilon\right\}.
\end{equation}
The even interlacing partition attached to $\mathbf j$ is
\begin{equation}\label{eq:betaJ}
 \beta_{\mathbf j}
 =\bigl(2(R_1-j_1),2(R_2-j_2),\ldots,
 2(R_{n-1}-j_{n-1})\bigr).
\end{equation}

\begin{prop}\label{prop:fixedDimension}
One has
\[
 \dim\tau^{O(n-1)}
 =|\mathcal J_{n,\epsilon}|
 =\prod_{i=1}^{n-1}(r_i+1-\epsilon).
\]
The $O(n-1)$-fixed line indexed by $\mathbf j$ lies in the
$U(n-1)$-summand
$\det^{d_n-\epsilon}\otimes\mathbb S_{\beta_{\mathbf j}}(\mathbb C^{n-1})$.
\end{prop}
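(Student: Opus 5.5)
The plan is to compute $\tau^{O(n-1)}$ by first restricting from $U(n)$ to $U(n-1)$ and then taking $O(n-1)$-invariants in each summand. Since $O(n-1)\subset U(n-1)$, we have
\[
\tau^{O(n-1)}=\bigoplus_\beta \bigl(\tau_\beta\bigr)^{O(n-1)},
\]
where $\tau_\beta$ runs over the $U(n-1)$-summands of $\tau|_{U(n-1)}$. Here we work with $\tau\simeq\det^{d_n}\otimes\mathbb S_\eta(\mathbb C^n)$ from \eqref{eq:tau}. To make the parity transparent we write
\[
\det{}^{d_n}=\det{}^{d_n-\epsilon}\otimes\det{}^{\epsilon},\qquad \mathbb S_\eta(\mathbb C^n)\otimes\det{}^\epsilon=\mathbb S_{\eta^{(\epsilon)}}(\mathbb C^n).
\]
The exponent $d_n-\epsilon=2a$ is even. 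Its restriction to $U(n-1)$ is the character $\det^{2a}$ of $U(n-1)$, which is trivial on $O(n-1)$ because $\det h=\pm1$ there. So $O(n-1)$-invariants are unaffected by this factor, and it only rides along in the labeling of the summands.

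Next we apply the multiplicity-free branching \eqref{eq:unitaryBranching} to $\lambda=\eta^{(\epsilon)}$, whose entries are $\lambda_i=2R_i+\epsilon$ with $\lambda_n=\epsilon$. This gives
\[
\tau|_{U(n-1)}=\bigoplus_\beta \det{}^{d_n-\epsilon}\otimes\mathbb S_\beta(\mathbb C^{n-1}),
\]
with $\beta$ interlacing $\eta^{(\epsilon)}$ as in \eqref{eq:interlace}. Here $\mathbb S_\beta(\mathbb C^{n-1})$ is a genuine polynomial Schur module, since $\beta_{n-1}\ge\epsilon\ge0$. By \eqref{eq:CH}, each summand contributes a line of $O(n-1)$-invariants exactly when all $\beta_i$ are even, and nothing otherwise.

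It remains to count the even interlacing $\beta$. Write $\beta_i=2R_i+\epsilon-t_i$. The interlacing conditions $\lambda_{i+1}\le\beta_i\le\lambda_i$ become $0\le t_i\le \lambda_i-\lambda_{i+1}=2r_i$, and evenness of $\beta_i$ means $t_i\equiv\epsilon\pmod 2$. Setting $t_i=2j_i+\epsilon$ turns these into $0\le 2j_i+\epsilon\le 2r_i$, i.e. $0\le j_i\le r_i-\epsilon/2$. Since $j_i$ is an integer and $\epsilon\in\{0,1\}$, this is the same as $0\le j_i\le r_i-\epsilon$. Hence the admissible indices are exactly $\mathcal J_{n,\epsilon}$ of \eqref{eq:indexSet}, and $\beta=2R_i+\epsilon-2j_i-\epsilon=2(R_i-j_i)$, which is $\beta_{\mathbf j}$ of \eqref{eq:betaJ}. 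The constraints on the coordinates $j_i$ are independent, so $|\mathcal J_{n,\epsilon}|=\prod_i(r_i+1-\epsilon)$, which is nonempty because $r_i\ge1$.

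This yields both the dimension formula and the location of the line indexed by $\mathbf j$ in $\det^{d_n-\epsilon}\otimes\mathbb S_{\beta_{\mathbf j}}(\mathbb C^{n-1})$. The only points needing care are the twist bookkeeping: the even determinant power is invisible on $O(n-1)$, and the shift by $\det^\epsilon$ turns $\eta$ into $\eta^{(\epsilon)}$ with nonnegative last entry, so that \eqref{eq:CH} applies to honest partitions.
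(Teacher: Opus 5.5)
Your proof is correct and follows the paper's argument: you twist to $\det^{d_n-\epsilon}\otimes\mathbb S_{\eta^{(\epsilon)}}(\mathbb C^n)$, branch to $U(n-1)$ with the multiplicity-free rule, keep exactly the summands with even $\beta$ by \eqref{eq:CH}, and parametrize them by $t_i=2j_i+\epsilon$ to recover $\mathcal J_{n,\epsilon}$ and $\beta_{\mathbf j}$. You spell out two points more explicitly than the paper does: why the even determinant twist is invisible on $O(n-1)$, and why $0\le 2j_i+\epsilon\le 2r_i$ is equivalent to $0\le j_i\le r_i-\epsilon$.
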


\begin{proof}
The branching in \eqref{eq:unitaryBranching} is multiplicity-free.  By
\eqref{eq:CH}, precisely the summands with even highest weight contribute
one $O(n-1)$-fixed line.  Equation \eqref{eq:betaJ} gives a
bijection between such summands and $\mathcal J_{n,\epsilon}$. The proposition follows by counting
independent choices of the $j_i$.
\end{proof}

We use the following form of the Pieri rule to compute those
multiplicities. Put $V=\mathbb C^n$.

\begin{defn}\label{def:strips}
For partitions $\nu\subset\lambda$, the skew diagram
$\lambda/\nu$ is a \emph{horizontal $\ell$-strip} if
$|\lambda|-|\nu|=\ell$ and no two new boxes lie in one column.
Equivalently,
\begin{equation}\label{eq:horizontalStrip}
 \lambda_1\geq\nu_1\geq\lambda_2\geq\nu_2
 \geq\cdots\geq\lambda_n\geq\nu_n,
 \qquad |\lambda|-|\nu|=\ell.
\end{equation}
\end{defn}

\begin{thm}[Pieri rule {\cite[Chapter~I, \S5, (5.16)]{macdonald1995}}]\label{thm:PieriRule}
For $\ell\geq0$,
\begin{equation}\label{eq:PieriSym}
 \mathbb S_\nu V\otimes\operatorname{Sym}^{\ell}V
 \simeq
 \bigoplus_{\substack{\lambda\supset\nu\\
 \lambda/\nu\ {\rm horizontal}\ \ell{\rm\mbox{-}strip}}}
 \mathbb S_\lambda V.
\end{equation}
Every summand has multiplicity one.
\end{thm}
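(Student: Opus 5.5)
The Pieri rule is cited from Macdonald, so the natural plan is to derive its representation-theoretic form \eqref{eq:PieriSym} from the symmetric-function identity
\[
 s_\nu h_\ell=\sum_{\lambda/\nu\ \text{horizontal }\ell\text{-strip}} s_\lambda
\]
by taking characters. The argument has three steps.

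First, I pass to characters. Both sides of \eqref{eq:PieriSym} are polynomial representations of $\GL(V)$, $V=\mathbb C^n$. A polynomial representation is determined up to isomorphism by its character, restricted to the diagonal torus $\operatorname{diag}(x_1,\ldots,x_n)$. The character of $\mathbb S_\nu V$ is the Schur polynomial $s_\nu(x_1,\ldots,x_n)$, and that of $\operatorname{Sym}^\ell V$ is the complete homogeneous polynomial $h_\ell(x_1,\ldots,x_n)$. Characters are multiplicative under tensor products, so it suffices to prove the displayed identity in $n$ variables.

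Second, I prove that identity. I use the bialternant formula $s_\nu=a_{\nu+\delta}/a_\delta$, where $\delta=(n-1,\ldots,0)$ and $a_\alpha=\det(x_i^{\alpha_j})$. The task reduces to
\[
 a_{\nu+\delta}\,h_\ell=\sum_\lambda a_{\lambda+\delta}.
\]
Write $h_\ell=\sum_{|\gamma|=\ell}x^\gamma$ and expand $a_{\nu+\delta}$ as an alternating sum of monomials. Symmetrizing then gives
\[
 a_{\nu+\delta}h_\ell=\sum_{|\gamma|=\ell}a_{\nu+\gamma+\delta}.
\]
Terms with repeated exponents in $\nu+\gamma+\delta$ vanish. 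Among the rest, a sign-reversing involution cancels every term for which $\nu+\gamma$ fails the interlacing condition $\nu_{i-1}\ge\nu_i+\gamma_i$; this is a standard transposition argument, and a Jacobi--Trudi or Gessel--Viennot proof would serve equally well. The survivors are exactly the $\lambda=\nu+\gamma$ satisfying \eqref{eq:horizontalStrip}, each appearing with coefficient $+1$.

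Third, the identity is multiplicity-free, so each $\mathbb S_\lambda V$ occurs exactly once. Because we work with $n$ variables, only partitions with at most $n$ rows appear; for longer $\lambda$ one has $s_\lambda(x_1,\ldots,x_n)=0$, which matches the condition that $\lambda$ have length at most $n$ in \eqref{eq:horizontalStrip}.

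The main obstacle is the cancellation in the second step: one must verify that the involution is well defined and sign-reversing. Since the statement is quoted from \cite[Chapter~I, \S5, (5.16)]{macdonald1995}, it is also reasonable simply to invoke that reference for the symmetric-function identity and carry out only the character translation.
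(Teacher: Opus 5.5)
Your proposal is correct and is essentially what the paper does: the paper gives no argument beyond quoting Macdonald's identity $s_\nu h_\ell=\sum_\lambda s_\lambda$, and its passage to $\mathbb S_\nu V\otimes\operatorname{Sym}^\ell V$ is the character translation in $n$ variables that you describe, with $s_\lambda(x_1,\ldots,x_n)=0$ for partitions of length $>n$. If you want the self-contained bialternant proof instead, you must specify the involution. Take the \emph{largest} index $i\ge2$ with $\nu_i+\gamma_i>\nu_{i-1}$ and swap the entries of $\nu+\gamma+\delta$ in positions $i-1,i$. This replaces $(\gamma_{i-1},\gamma_i)$ by $(\gamma_i-\nu_{i-1}+\nu_i-1,\ \gamma_{i-1}+\nu_{i-1}-\nu_i+1)$ and leaves the conditions at indices $>i$ unchanged. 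It is therefore a genuine sign-reversing involution, and its fixed points have a repeated exponent. Choosing the smallest violating index instead fails: for $\nu=(0,0,0)$ and $\gamma=(0,0,5)$, the map sends $\gamma$ to $(0,4,1)$ and then to $(3,1,1)$.
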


For $\mathbf j\in\mathcal J_{n,\epsilon}$ put
\begin{equation}\label{eq:ellJ}
 \ell(\mathbf j)=n\epsilon+2|\mathbf j|,
 \qquad |\mathbf j|=j_1+\cdots+j_{n-1}.
\end{equation}
Let $H_\ell^{(n)}$ be the $O(n-1)$-fixed harmonic polynomial
normalized by $H_\ell^{(n)}(e_n)=1$. 
\begin{thm}\label{thm:structure} The generating series of the  multiplicities of $\mathcal H_n^\ell$ in the minimal $U(n)$-type $\tau$ satisfy
\begin{equation}\label{eq:multiplicityGenerating}
 \sum_{\ell\ge0}
 \dim\operatorname{Hom}_{O(n)}(\mathcal H_n^\ell,\tau)T^\ell
 =T^{n\epsilon}\prod_{i=1}^{n-1}
     (1+T^2+\cdots+T^{2(r_i-\epsilon)}),
\end{equation} where $T$ is a variable. 
Put
$M_{n,J}=\operatorname{Hom}_{O(n)}(\mathcal H_n^{n\epsilon+2J},\tau)$ for each non-negative integer $J$.
We obtain an isomorphism
\begin{equation}\label{eq:fixedMultiplicity}
 \tau^{O(n-1)}\simeq
 \bigoplus_{J\ge0}
       (\mathcal H_n^{n\epsilon+2J})^{O(n-1)}\otimes M_{n,J}.
\end{equation}
Choose a basis $\{\iota_{\mathbf j}:|\mathbf j|=J\}$
of $M_{n,J}$, indexed by $\mathbf j\in\mathcal J_{n,\epsilon}$,
and put $u_{\mathbf j}=\iota_{\mathbf j}(H_{\ell(\mathbf j)}^{(n)})$. Then
\begin{equation}\label{eq:fixedBasis}
 \tau^{O(n-1)}
   =\bigoplus_{\mathbf j\in\mathcal J_{n,\epsilon}}
          \mathbb C u_{\mathbf j},\quad \operatorname{span}_{\mathbb C}
   \{\tau(k)u_{\mathbf j}:k\in O(n)\}
     \simeq\mathcal H_n^{\ell(\mathbf j)}.
\end{equation}In particular, 
\(\ell(\mathbf0)=n\epsilon\) and $\dim\operatorname{Hom}_{O(n)}(\mathcal H_n^{n\epsilon},\tau)=1$.
\end{thm}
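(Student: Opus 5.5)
The plan is to compute the $O(n)$-decomposition of $\tau|_{O(n)}\simeq\mathbb S_{\eta^{(\epsilon)}}(\mathbb C^n)|_{O(n)}$ only as far as we need. Since all types are the harmonic spaces, we only need the multiplicity of each $\mathcal H_n^\ell$. The key fact is that $\mathcal H_n^\ell$ is the unique irreducible $O(n)$-representation whose $O(n-1)$-fixed space is nonzero and whose restriction to $O(n-1)$ contains the types $\mathcal H_{n-1}^k$ only for $k\le\ell$. Each $\mathcal H_n^\ell$ has a one-dimensional $O(n-1)$-fixed line (the zonal polynomial $H^{(n)}_\ell$). Every irreducible $O(n)$-type with an $O(n-1)$-fixed vector is spherical for the pair $(O(n),O(n-1))$, hence is some $\mathcal H_n^\ell$. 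Together with Proposition~\ref{prop:fixedDimension}, this gives
\[
 \dim\tau^{O(n-1)}=\sum_\ell\dim\operatorname{Hom}_{O(n)}(\mathcal H_n^\ell,\tau).
\]
This already yields \eqref{eq:fixedMultiplicity} and \eqref{eq:fixedBasis} once the multiplicities are known: decompose the spherical part of $\tau$ isotypically, and take fixed lines. The content of the theorem is therefore the graded refinement \eqref{eq:multiplicityGenerating}, namely that the fixed line indexed by $\mathbf j$ lives in an $\mathcal H_n^{\ell(\mathbf j)}$.

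To grade, I will use the degree filtration coming from the polynomial (Schur) realization together with the classical $(O(n),\mathfrak{sl}_2)$ Howe duality on $\operatorname{Sym}(\mathbb C^n)$. Concretely, write $\mathbb S_{\eta^{(\epsilon)}}(\mathbb C^n)$ via a Pieri-type resolution. Alternatively, compare characters: restrict to $O(n-1)$ in two ways. First, via $U(n)\downarrow U(n-1)\downarrow O(n-1)$, the fixed lines are indexed by $\mathbf j$. Second, via $O(n)\downarrow O(n-1)$, we have $\mathcal H_n^\ell|_{O(n-1)}=\bigoplus_{k\le\ell}\mathcal H_{n-1}^k$. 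To detect $\ell$, I use the eigenvalue of the $O(n)$ Casimir, or more cleanly the action of a central element distinguishing degrees. I would try to use the $U(n-1)$-weight: the fixed line for $\mathbf j$ sits in $\det^{d_n-\epsilon}\otimes\mathbb S_{\beta_{\mathbf j}}$. The difference $|\eta^{(\epsilon)}|-|\beta_{\mathbf j}|=n\epsilon+2|\mathbf j|$ is the total $e_n$-degree of the vector, i.e.\ the power of the last coordinate. I expect to show that the $O(n)$-span of this vector is harmonic of exactly that degree, by an induction on $n$ combined with the Pieri rule (Theorem~\ref{thm:PieriRule}). Realize $\mathbb S_{\eta^{(\epsilon)}}$ inside $\mathbb S_{\beta}(\mathbb C^{n-1})\otimes\operatorname{Sym}^{t}(\mathbb C e_n)$-graded pieces, and identify the $O(n)$-type generated from the zonal vector in $\operatorname{Sym}^\ell\mathbb C^n$, which is $\mathcal H_n^\ell\oplus r^2\mathcal H_n^{\ell-2}\oplus\cdots$.

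The main obstacle is precisely this matching of the branching index $\mathbf j$ with the harmonic degree $\ell$: the filtration by $e_n$-degree is not $O(n)$-stable, so one must argue via the associated graded, or via generating functions. My fallback is a character computation. Compute the $O(n)$-character of $\mathbb S_{\eta^{(\epsilon)}}(\mathbb C^n)$ restricted to the spherical part using Littlewood's restriction rule (\cite[Ch.~I, \S5]{macdonald1995}). Then show that the resulting generating function of $O(n-1)$-fixed vectors, graded by the Casimir eigenvalue $\ell(\ell+n-2)$, equals $T^{n\epsilon}\prod_i(1+T^2+\dots+T^{2(r_i-\epsilon)})$, checking consistency with the total count $\prod(r_i+1-\epsilon)$ at $T=1$. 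The final assertions then follow. Setting $\mathbf j=\mathbf 0$ gives $\ell=n\epsilon$ with coefficient $1$, so no smaller degree occurs. The lowest term $T^{n\epsilon}$ corresponds to the vector $(x_1\cdots x_n)^{\epsilon}$-type component, i.e.\ $\det^\epsilon$ restricted to $O(n)$, which is $\mathcal H_n^{n\epsilon}$ only after the appropriate identification; this must be verified directly for $\epsilon=1$.
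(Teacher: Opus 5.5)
Your reduction of \eqref{eq:fixedMultiplicity} and \eqref{eq:fixedBasis} to the multiplicities is correct, and it matches how the paper finishes. The only irreducible $O(n)$-types with $O(n-1)$-fixed vectors are the $\mathcal H_n^\ell$, each with a one-dimensional fixed line, so one takes $O(n-1)$-invariants in the isotypic decomposition. The gap is \eqref{eq:multiplicityGenerating} itself, which the proposal never establishes.

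Your main route aims at a refinement that is false. The $O(n-1)$-fixed line in the $U(n-1)$-summand indexed by $\mathbf j$ need not lie in a single $O(n)$-isotypic component, let alone in a copy of $\mathcal H_n^{\ell(\mathbf j)}$. Take $n=2$, $\epsilon=0$, $r_1=1$. Then $\tau|_{O(2)}\simeq\operatorname{Sym}^2\mathbb C^2=\mathcal H_2^2\oplus\mathbb C(e_1^2+e_2^2)$, while the two lines from $U(1)$-branching are $\mathbb Ce_1^2$ and $\mathbb Ce_2^2$. Each has nonzero components in both isotypic pieces; the remark after the theorem makes the same point. So the $e_n$-degree $n\epsilon+2|\mathbf j|$ of a branching line is not the harmonic degree of anything attached to that line. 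Only the numerical counts can agree, and you give no mechanism for proving that.

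The fallback is only named. Littlewood's restriction rule in its standard form requires $\lambda$ to have at most $n/2$ nonzero rows, whereas $\eta^{(\epsilon)}$ has $n-1$ or $n$, so modification rules would intervene. Nothing in the proposal produces the product $\prod_i(1+T^2+\cdots+T^{2(r_i-\epsilon)})$, and checking $T=1$ is only a consistency check. Also, $\det|_{O(n)}$ is one-dimensional and has no $O(n-1)$-fixed vector, so it cannot be $\mathcal H_n^n$ under any identification.

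The missing idea is to read the harmonic multiplicities off invariants in a tensor product. Your own ingredients, Pieri and $\operatorname{Sym}^\ell V\simeq\bigoplus_r\mathcal H_n^{\ell-2r}$, combine directly there.

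\begin{itemize}
\item Put $c_\ell=\dim(\tau\otimes\operatorname{Sym}^\ell V)^{O(n)}$ and $m_\ell=\dim\Hom_{O(n)}(\mathcal H_n^\ell,\tau)$. Self-duality of the harmonic spaces gives $c_\ell=m_\ell+m_{\ell-2}+\cdots$, hence $\sum_\ell m_\ell T^\ell=(1-T^2)\sum_\ell c_\ell T^\ell$.
\item Since $\tau|_{O(n)}\simeq\mathbb S_{\eta^{(\epsilon)}}V|_{O(n)}$, the Pieri rule splits $\mathbb S_{\eta^{(\epsilon)}}V\otimes\operatorname{Sym}^\ell V$ multiplicity-freely into $\mathbb S_\mu V$ with $\mu/\eta^{(\epsilon)}$ a horizontal $\ell$-strip.
\item $\dim\mathbb S_\mu(V)^{O(n)}$ is $1$ if $\mu$ is even and $0$ otherwise.
\item The even $\mu$ interlacing $\eta^{(\epsilon)}$ are given by $\mu_1=\eta^{(\epsilon)}_1+\epsilon+2a_0$ with $a_0\ge0$, and $\mu_{i+1}=\eta^{(\epsilon)}_{i+1}+\epsilon+2a_i$ with $0\le a_i\le r_i-\epsilon$. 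Their size is $n\epsilon+2\sum_i a_i$.
\item Hence $\sum_\ell c_\ell T^\ell=T^{n\epsilon}(1-T^2)^{-1}\prod_i(1+T^2+\cdots+T^{2(r_i-\epsilon)})$, which gives \eqref{eq:multiplicityGenerating}.
\end{itemize}
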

\begin{proof}
Put $\lambda=\eta^{(\epsilon)}$ and $V=\mathbb C^n$.
The even determinant twist in \eqref{eq:tau} is trivial on
$O(n)$, so $\tau|_{O(n)}\simeq\mathbb S_\lambda V|_{O(n)}$.
Set
\[
 c_\ell=\dim
   (\mathbb S_\lambda V\otimes\operatorname{Sym}^{\ell}V)^{O(n)},
 \qquad
 m_\ell=\dim\operatorname{Hom}_{O(n)}(\mathcal H_n^\ell,\tau),
\]
and set $c_\ell=0$ for $\ell<0$.
The invariant bilinear form identifies each harmonic space
with its $O(n)$-dual. We have an $O(n)$-equivariant decomposition
$\operatorname{Sym}^{\ell}V\simeq\bigoplus_{r=0}^{\lfloor\ell/2\rfloor}
\mathcal H_n^{\ell-2r}$. Since each harmonic representation is self-dual, taking
$O(n)$-invariants after tensoring with $\tau$ gives
$c_\ell=\sum_{r=0}^{\lfloor\ell/2\rfloor}
\dim\operatorname{Hom}_{O(n)}(\mathcal H_n^{\ell-2r},\tau)
 =m_\ell+m_{\ell-2}+\cdots$
Subtracting the same identity for $\ell-2$ yields
$m_\ell=c_\ell-c_{\ell-2}$, with $c_j=0$ for $j<0$.

By Theorem~\ref{thm:PieriRule} and
\eqref{eq:CH}, applied in dimension $n$,
$c_\ell$ equals the number of partitions $\mu$ with all
coordinates even such that $\mu/\lambda$ is a horizontal
$\ell$-strip. It satisfies the interlacing relationship
$\mu_1\ge\lambda_1\ge\mu_2\ge\lambda_2\ge\cdots\ge\mu_n\ge\lambda_n$. 
Since $\lambda_i=2R_i+\epsilon$, such partitions are
uniquely parametrized by
\[
 \mu_1=\lambda_1+\epsilon+2a_0,\quad a_0\ge0,\qquad
 \mu_{i+1}=\lambda_{i+1}+\epsilon+2a_i,\quad
 0\le a_i\le r_i-\epsilon\quad(1\le i<n).
\]The size of the strip is
 $|\mu|-|\lambda|=n\epsilon+2\sum_{i=0}^{n-1}a_i$.
Thus
\[
 \sum_{\ell\ge0}c_\ell T^\ell
 =\frac{T^{n\epsilon}}{1-T^2}
    \prod_{i=1}^{n-1}(1+T^2+\cdots+T^{2(r_i-\epsilon)}).
\]
Multiplication by $1-T^2$ proves
\eqref{eq:multiplicityGenerating}.

For every irreducible harmonic type, complete reducibility
and Schur's lemma show that the evaluation map
\[
 \mathcal H_n^\ell\otimes
     \operatorname{Hom}_{O(n)}(\mathcal H_n^\ell,\tau)
 \longrightarrow \tau[\mathcal H_n^\ell],\qquad
 H\otimes\iota\longmapsto\iota(H),
\] where $\tau[\mathcal H_n^\ell]$ is the $\mathcal H_n^\ell$-isotypic component of $\tau$,
is an isomorphism. All other irreducible $O(n)$-types have
no $O(n-1)$-fixed vectors. Therefore, by taking $O(n-1)$-fixed vectors and summing
over $\ell$, we obtain \eqref{eq:fixedMultiplicity}.
Each $(\mathcal H_n^\ell)^{O(n-1)}$ is spanned by
$H_\ell^{(n)}$. The chosen bases of $M_{n, J}$
now give the decomposition of $\tau^{O(n-1)}$ in \eqref{eq:fixedBasis}. Since a nonzero intertwining operator from
an irreducible representation to another is injective, this proves the second part of \eqref{eq:fixedBasis}. Note that
the coefficient of $T^{n\epsilon}$ in
\eqref{eq:multiplicityGenerating} is one, and all
coefficients of lower degree vanish.
\end{proof}

\begin{remark}For each $J$, the lines $\mathbb C u_{\mathbf j}$ with
$|\mathbf j|=J$ arise from the chosen basis of $M_{n,J}$.
They need not coincide with the $O(n-1)$-fixed lines in the
$U(n-1)$-summands of Proposition~\ref{prop:fixedDimension}. However,
the lowest line $\mathbb C u_{\mathbf0}$ is independent of
the choice of embeddings since $\dim M_{n,0}=1$ by Theorem~\ref{thm:structure}.     
\end{remark}

\subsection{The lowest harmonic type and reduction to Mellin transform}
\label{read:compact}
By Theorem~\ref{thm:structure}, the lowest harmonic component
$\tau[\mathcal H_n^{n\epsilon}]$ occurs with multiplicity one.
Its $O(n-1)$-fixed line is $\mathbb C u_{\mathbf0}$, even though
$\tau^{O(n-1)}$ may have larger dimension. Choose $v_{n,\epsilon}$
on this line: it is $O(n)$-fixed when $\epsilon=0$, and belongs
to the unique copy of $\mathcal H_n^n$ when $\epsilon=1$.
The polynomial \eqref{read:lowest-polynomials} fixes its scale;
Theorem~\ref{read:lowest-type} gives its polynomial realization.
For notational simplicity, put
\begin{equation}\label{eq:bij-N-s-alpha-l}
     b_{ij}=(d_i-d_j)/2,\quad N=\binom n2,\quad
 \ell=n\epsilon.
\end{equation}

\begin{remark}
We choose the lowest harmonic component because it occurs with
multiplicity one, so its $O(n-1)$-fixed line is canonical.
Moreover, the corresponding harmonic factor in the Schwartz
function contributes $t^{n\epsilon}$ when integrating over the
positive center $t=a_n>0$, producing the gamma factor
$\Gamma\bigl(n(s+\epsilon)/2\bigr)$ in \eqref{read:period-M}.
The explicit polynomial \eqref{read:lowest-polynomials}
also permits an exact calculation of the normalization.
In general, a test vector need not necessarily lie in the multiplicity-one
harmonic component.
\end{remark}

\subsubsection{The integral over $O(n)$.}
Let $H_\ell^{(n)}$ be the zonal harmonic polynomial with
$H_\ell^{(n)}(e_n)=1$, and put $h_\ell=\dim\mathcal H_n^\ell$.
At this moment, given constants $A\in\mathbb C$ and $c>0$, we set
\[\Phi_{A,c}(x)=A h_\ell H_\ell^{(n)}(\sqrt c\,x)e^{-\pi c\|x\|^2}.\]

\begin{lem}\label{read:Schur}
For $a=\operatorname{diag}(a_1,\ldots,a_n)$ with $a_i>0$,
\begin{equation}\label{read:compact-result}
 \int_{O(n)}W_{f_{v_{n,\epsilon}}}(ak)\Phi_{A,c}(e_nak)\,dk
 =A c^{\ell/2}a_n^\ell e^{-\pi c a_n^2}W_{f_{v_{n,\epsilon}}}(a).
\end{equation}
\end{lem}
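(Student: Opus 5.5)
The plan is to compute $e_nak$ explicitly and then use Schur orthogonality for the harmonic type $\mathcal H_n^\ell$. Since $e_na=a_ne_n$, one has $\Phi_{A,c}(e_nak)=Ah_\ell H_\ell^{(n)}(\sqrt c\,a_n e_nk)e^{-\pi c a_n^2}$, because $\|e_nk\|=1$. Homogeneity of degree $\ell$ then gives
\[
 \Phi_{A,c}(e_nak)=A h_\ell c^{\ell/2}a_n^\ell e^{-\pi c a_n^2}\,H_\ell^{(n)}(e_nk).
\]
The $a$-dependent scalars therefore factor out. It remains to show
\[
 h_\ell\int_{O(n)}W_{f_{v_{n,\epsilon}}}(ak)H_\ell^{(n)}(e_nk)\,dk=W_{f_{v_{n,\epsilon}}}(a).
\]

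For this I would use the $K$-equivariance of the Whittaker map. We have $W_{f_v}(ak)=W_{f_{\tau(k)v}}(a)$, and $v\mapsto W_{f_v}(a)$ is a linear functional $\lambda_a$ on $\tau$. Write $v=v_{n,\epsilon}$ and $U=\operatorname{span}\{\tau(k)v\}\simeq\mathcal H_n^\ell$, using \eqref{eq:fixedBasis} with $\mathbf j=\mathbf 0$. The function $k\mapsto\lambda_a(\tau(k)v)$ is then a matrix coefficient of $U$. The function $k\mapsto H_\ell^{(n)}(e_nk)$ is a matrix coefficient of $\mathcal H_n^\ell$ under the right action $(R(k)P)(x)=P(xk)$, evaluated at $e_n$. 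It is the $O(n-1)$-biinvariant zonal spherical function, equal to $1$ at $k=1$.

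Identify $U$ with $\mathcal H_n^\ell$ through the unique embedding $\iota_{\mathbf 0}$ sending $H_\ell^{(n)}\mapsto v$; this is possible because $v=u_{\mathbf 0}$ up to scale. Take an $O(n)$-invariant inner product with $\|H_\ell^{(n)}\|$ normalized, and use the reproducing property $P(e_n)=\langle P,Z\rangle$, where $Z=H_\ell^{(n)}/\|H_\ell^{(n)}\|^2$. This gives
\[
 H_\ell^{(n)}(e_nk)=\langle R(k)H_\ell^{(n)},Z\rangle
 =\overline{\langle R(k^{-1})Z,H_\ell^{(n)}\rangle}.
\]
Schur orthogonality for the real-valued zonal function, $\int_{O(n)}\phi(k)\overline{\langle \sigma(k)x,y\rangle}\,dk=\dim^{-1}\cdots$, then yields
\[
 \int_{O(n)}\lambda_a(\tau(k)v)H_\ell^{(n)}(e_nk)\,dk
 =\frac1{h_\ell}\,\lambda_a(v)\,\frac{\langle Z,Z\rangle\|H_\ell^{(n)}\|^2}{\|H_\ell^{(n)}\|^2\cdot\|Z\|^2}\cdot(\text{normalizing}),
\]
which simplifies to $\lambda_a(v)/h_\ell$. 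A cleaner route to the same identity is to use the isotypic projector: $h_\ell\int_{O(n)}H^{(n)}_\ell(e_nk)\tau(k)\,dk$ restricted to $U$ is the orthogonal projection onto the $O(n-1)$-fixed line, sending $v$ to $v$. The map $\tau\to U$ is equivariant, and components of $\tau$ outside $U$ are irrelevant since the integrand only sees $\tau(k)v\in U$.

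The main obstacle is getting the normalization exactly right, namely that the operator
\[
 h_\ell\int_{O(n)}H_\ell^{(n)}(e_nk)\,\sigma(k)\,dk \quad\text{on } \mathcal H_n^\ell
\]
fixes the zonal vector with constant exactly $1$. I would verify this directly by applying the operator to $H_\ell^{(n)}$ and evaluating at $e_n$. This gives $h_\ell\int_{O(n)} H_\ell^{(n)}(e_nk)^2\,dk$, which equals $h_\ell\|H_\ell^{(n)}\|^2_{L^2(S^{n-1})}$ with respect to the normalized sphere measure. The classical identity $\|Z_\ell\|^2=Z_\ell(e_n)=h_\ell$ for the unnormalized zonal $Z_\ell=h_\ell H_\ell^{(n)}$ gives $\|H_\ell^{(n)}\|^2=1/h_\ell$, so the constant is indeed $1$.

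I must also confirm that the real-valuedness of $H_\ell^{(n)}$ and the choice of right versus left action (via $k\leftrightarrow k^{-1}$) do not introduce a conjugation or inverse that changes the result. Both are harmless since the zonal function satisfies $\phi(k^{-1})=\phi(k)$.
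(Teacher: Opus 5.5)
Your proposal is correct and follows essentially the paper's proof. You factor the scalars out of $\Phi_{A,c}(e_nak)$ using homogeneity and $\|e_nk\|=1$, prove the reproducing identity $h_\ell\int_{O(n)}\tau(k)v_{n,\epsilon}\,H_\ell^{(n)}(e_nk)\,dk=v_{n,\epsilon}$, and apply the functional $v'\mapsto W_{f_{v'}}(a)$; your projector check $h_\ell\int_{O(n)}H_\ell^{(n)}(e_nk)^2\,dk=1$ is what the paper obtains from Schur orthogonality once the invariant form is chosen with $H_\ell^{(n)}$ a unit vector (so that $H_\ell^{(n)}(e_nk)=\langle R(k)H_\ell^{(n)},H_\ell^{(n)}\rangle$), and your garbled first display with the ``(normalizing)'' factor should simply be dropped in favor of that projector argument.
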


\begin{proof}
Put $v=v_{n,\epsilon}$, $\ell=n\epsilon$, and
$V_\ell^{\mathrm{harm}}
=\operatorname{span}_{\mathbb C}\{\tau(k)v:k\in O(n)\}$.
By Theorem~\ref{thm:structure}, the harmonic type
$\mathcal H_n^\ell$ occurs in $\tau$ with multiplicity one,
and its $O(n-1)$-fixed subspace is one-dimensional. Thus
\[
 V_\ell^{\mathrm{harm}}\simeq\mathcal H_n^\ell,
 \qquad
 (V_\ell^{\mathrm{harm}})^{O(n-1)}=\mathbb Cv.
\]
Choose an orthonormal basis $u_1,\ldots,u_{h_\ell}$ with
$u_1=v/\|v\|$. An equivariant isomorphism identifies
$\mathbb Cu_1$ with $\mathbb C H_\ell^{(n)}$; rescaling it,
we may arrange that $u_1$ maps to $H_\ell^{(n)}$.
Equip $\mathcal H_n^\ell$ with the invariant Hermitian form
$\langle\cdot,\cdot\rangle$ making this identification unitary,
and write $(R(k)P)(x)=P(xk)$.

Normalize the Haar measure by $\operatorname{vol}(O(n-1),dh)=1$, note that it is independent of the normalization $\operatorname{vol}(O(n),dk)=1$ used later for Schur orthogonality,
since $dh$ is normalized on the subgroup itself and is not
obtained by restricting $dk$. Since $H_\ell^{(n)}$ is a unit vector spanning the
$O(n-1)$-fixed subspace, we therefore have
\[
 \int_{O(n-1)}R(h)P\,dh
 =\langle P,H_\ell^{(n)}\rangle H_\ell^{(n)}
 \qquad(P\in\mathcal H_n^\ell).
\]
Using $e_nh=e_n$ and $H_\ell^{(n)}(e_n)=1$, evaluation at
$e_n$ yields
$P(e_n)=\langle P,H_\ell^{(n)}\rangle$.
Taking $P=R(k)H_\ell^{(n)}$, we obtain
\[
 \varphi(k):=\langle\tau(k)u_1,u_1\rangle
 =\langle R(k)H_\ell^{(n)},H_\ell^{(n)}\rangle
 =H_\ell^{(n)}(e_nk).
\]
Thus, under $O(n-1)\backslash O(n)\simeq S^{n-1}$, the matrix
coefficient is the restriction of the homogeneous harmonic
polynomial $H_\ell^{(n)}$ of degree $\ell$.
In particular, $\varphi(k)$ is real.
By Schur orthogonality, for $1\le r\le h_\ell$, we have
\[\int_{O(n)}\langle\tau(k)u_1,u_r\rangle\overline{\langle\tau(k)u_1,u_1\rangle}\,dk
 =\frac{\delta_{r1}}{h_\ell}.
\]

On the other hand, expand
$\tau(k)v=\|v\|\sum_{r=1}^{h_\ell}\langle\tau(k)u_1,u_r\rangle u_r$.
Multiply by $h_\ell H_\ell^{(n)}(e_nk)$ and integrate, we get
\begin{equation}\label{read:vector-projection}
h_\ell\int_{O(n)}\tau(k)v_{n,\epsilon}H_\ell^{(n)}(e_nk)\,dk=v_{n,\epsilon}.
\end{equation}
Apply $v'\mapsto W_{f_{v'}}(a)$ and use
\[
 \Phi_{A,c}(e_nak)=A h_\ell c^{\ell/2}a_n^\ell
 H_\ell^{(n)}(e_nk)e^{-\pi c a_n^2},
\]
which follows from homogeneity and $\|e_nk\|=1$.
This proves \eqref{read:compact-result}.
\end{proof}

\subsubsection{The Mellin transform}By our choice of measures as in \eqref{eq:Iwasawa-measure},
the quotient measure on $N_n(\mathbb R)\backslash G_n(\mathbb R)$ is
\[
 dg=\delta_{B_n(\mathbb R)}(a)^{-1}\prod_i\frac{da_i}{a_i}\,dk,
 \qquad
 \delta_{B_n(\mathbb R)}(a)=\prod_i a_i^{n+1-2i}.
\]
Since $\psi$ is trivial on $N_n(\mathbb R)$, the Flicker integral is
\begin{align}
 Z(s,W_{f},\Phi_{A,c})
 &=\int_{A_n(\mathbb R)^+}\int_{O(n)}
 W_{f}(ak)\Phi_{A,c}(e_nak)\,dk\,
             (\det a)^s\delta_{B_n(\mathbb R)}(a)^{-1}
                                      \prod_i\frac{da_i}{a_i}.
 \label{read:Iwasawa}
\end{align}

Let $f=f_{v_{n,\epsilon}}$ be the section associated to $v_{n,\epsilon}$, define 
\begin{equation}\label{read:reduced}
 F_f(a)=\delta_{B_n(\mathbb R)}(a)^{-1}(\det a)^{-m}W_f(a).
\end{equation}  Let $f_\nu$ have the same restriction to $K$ as $f$ but with
inducing exponents $\nu_1,\ldots,\nu_n$. Initially choose
$\nu$ in the convergence chamber of the Jacquet integral.
Since $\dot w_nua=(\dot w_na\dot w_n^{-1})\dot w_n(a^{-1}ua)$, 
the induction law \eqref{read:induction} gives
\[
 f_\nu(\dot w_nua)
 =\delta_{B_n(\mathbb C)}(a)^{-1/2}
   \left(\prod_i a_{n+1-i}^{2\nu_i}\right)
   f_\nu(\dot w_n(a^{-1}ua)).
\]
Substituting $u=ava^{-1}$ in the Jacquet integral, we have
$du=\delta_{B_n(\mathbb C)}(a)\,dv$. Hence
\[
 W_{f_\nu}(a)
 =\delta_{B_n(\mathbb R)}(a)
   \left(\prod_i a_{n+1-i}^{2\nu_i}\right)
   \int_{N_n(\mathbb C)}
      f_\nu(\dot w_nv)\psi^{-1}(ava^{-1})\,dv,
\]
where we used
$\delta_{B_n(\mathbb C)}(a)^{1/2}
=\delta_{B_n(\mathbb R)}(a)$.
By holomorphic continuation in $\nu$, this identity remains
valid at $\nu_i=m/2$, where the character factor becomes
$(\det a)^m$. Dividing by
$\delta_{B_n(\mathbb R)}(a)(\det a)^m$ as in
\eqref{read:reduced}, and renaming $v$ as $u$, gives
\begin{equation}\label{read:variable-character}
 F_f(a)=\int_{N_n(\mathbb C)}f(\dot w_nu)
 \exp\left(-2\pi i\sum_{i=1}^{n-1}
       \frac{a_i}{a_{i+1}}\operatorname{Im}u_{i,i+1}\right)\,du.
\end{equation}
The integral is understood by the same continuation as \eqref{read:Jacquet}. In particular $F_f(ta)=F_f(a)$ for $t>0$.

Put $x_i=a_i/a_{i+1}$ and $t=a_n$.  Then
\[
 a=t\operatorname{diag}(x_1\cdots x_{n-1},\ldots,x_{n-1},1),
 \quad \det a=t^n\prod_{i=1}^{n-1}x_i^i,
 \quad \prod_i\frac{da_i}{a_i}=\frac{dt}{t}\prod_{i<n}\frac{dx_i}{x_i}.
\]
The last equality follows from the triangular logarithmic change
of variables, whose determinant is one. Define, for $s\in\mathbb C$,
\begin{equation}\label{read:M}
 M_{n,\epsilon}(s)=\int_{(0,\infty)^{n-1}}
 F_f(\operatorname{diag}(x_1\cdots x_{n-1},\ldots,x_{n-1},1))\prod_{i<n}x_i^{is}\frac{dx_i}{x_i}.
\end{equation}
Insert Lemma~\ref{read:Schur} and \eqref{read:reduced} in
\eqref{read:Iwasawa}, with the Flicker variable replaced by $s-m$.
The integration in $t$ gives
\[
 c^{n\epsilon/2}\int_0^\infty
       t^{n(s+\epsilon)}e^{-\pi ct^2}\frac{dt}{t}
             =\frac12c^{-ns/2}\pi^{-\frac{n(s+\epsilon)}{2}}\Gamma(\frac{n(s+\epsilon)}{2}).
\]
We have reduced the Flicker integral to the following Mellin transform
\begin{equation}\label{read:period-M}
 Z(s-m,W_f,\Phi_{A,c})
      =\frac A2c^{-ns/2}\pi^{-\frac{n(s+\epsilon)}{2}}\Gamma(\frac{n(s+\epsilon)}{2})M_{n,\epsilon}(s).
\end{equation}

\subsubsection{The identity to be proved}
In the rest of the proof we will choose
\begin{equation}\label{read:Schwartz}
 c=2^{n-1},\qquad
 A=A_{n,\epsilon}=(-1)^{N\epsilon}2^{n(n+1)/2}
                 \prod_{i<j}\Gamma_{\mathbb C}(1+b_{ij}),\qquad
 \Phi_{n,\epsilon}=\Phi_{A,c}.
\end{equation}
Set
\begin{equation}\label{read:T}
 T_{n,\epsilon}(s)=
 \frac{(-1)^{N\epsilon}}{2^{n-1}}\pi^{N(1-s)}
 \frac{\Gamma(\frac{s+\epsilon}{2})^n}{\Gamma(\frac{n(s+\epsilon)}{2})}
\prod_{i<j}\frac{\Gamma(s+b_{ij})}{\Gamma(1+b_{ij})}.
\end{equation}
By the definitions of $\Gamma_{\mathbb R}$ and $\Gamma_{\mathbb C}$,
direct multiplication shows that
\begin{equation}\label{read:target-normalization}
 \frac{A_{n,\epsilon}}2 2^{-Ns}\pi^{-\frac{n(s+\epsilon)}{2}}\Gamma(\frac{n(s+\epsilon)}{2})
       T_{n,\epsilon}(s)
 =\Gamma_{\mathbb R}(s+\epsilon)^n
 \prod_{i<j}\Gamma_{\mathbb C}(s+b_{ij})
 =L(s-m,\pi,\As^+).
\end{equation}
Thus \begin{equation}\label{Flicker-Asai-equivalence}
    Z(s,W_{f_{v_{n,\epsilon}}},\Phi_{n,\epsilon})=L(s,\pi,\As^+)\quad\Longleftrightarrow\quad M_{n,\epsilon}=T_{n,\epsilon}
\end{equation}

The next subsection constructs the section $f=f_{v_{n,\epsilon}}$ and derives its
recurrence relations via Godement sections. Subsection~\ref{read:evaluation} uses that recurrence,
the functional equation of Archimedean Asai L-functions established by Beuzart--Plessis in \cite{beuzart2021archimedean}, and two analytic estimates to prove
the equality between $M_{n,\epsilon}$ and $T_{n,\epsilon}$. Section~\ref{read:analytic} proves those estimates.

\subsection{Polynomial model for test vectors and recurrences via Godement section}
\label{read:polynomial}In this section, we construct a polynomial realization of the cohomological test vector by realizing the minimal $K$-type $\tau$ through Cartan products
of exterior powers, then express its vectors using polynomials in row flag minors.
This gives an explicit vector $v_{n,\epsilon}$ on the lowest line of
Theorem~\ref{thm:structure}. We then use Godement section to prove its recurrence relations.

\subsubsection{The Cartan product and its flag polynomial realization}
\label{read:Schur-functor}
Let $V=\mathbb C^n$ have its standard basis and Hermitian form,
linear in the first variable. Put
$\eta=(2R_1,\ldots,2R_{n-1},0)$, so that
$\tau=\det^{d_n}\otimes\mathbb S_\eta(V)$.
We first construct $\mathbb S_\eta(V)$ and its $O(n-1)$-invariant vector
by Cartan products, and then provide their polynomial realizations.

The tensor product of highest-weight vectors in two irreducible
rational $\mathrm{GL}_n(\mathbb C)$-modules generates their
\emph{Cartan component}. We call the orthogonal projection onto
this component the \emph{Cartan product}. It is
$\mathrm{GL}_n(\mathbb C)$-equivariant: its $U(n)$-equivariance
extends to the complexified Lie algebra and hence to the connected group.

We use the following tensor realization of Schur functors. For
$E=\mathbb C^p$ and a partition $\lambda$ with $h\le p$ nonzero parts,
put $c_j=\lambda_j-\lambda_{j+1}$, with $\lambda_{h+1}=0$, and
\[
 \mathcal T_\lambda(E)=\bigotimes_{j=1}^h
 (\textstyle\bigwedge^jE)^{\otimes c_j},\qquad
 v^+_{\lambda,E}=\bigotimes_{j=1}^h
 (e_1\wedge\cdots\wedge e_j)^{\otimes c_j}.
\]
The Cartan component generated by this unit highest vector is
$\mathbb S_\lambda(E)$. For any linear map $A:E\to F$,
including singular maps, the tensor map
$\mathcal T_\lambda(A)=\bigotimes_j(\bigwedge^jA)^{\otimes c_j}$
restricts to $\mathbb S_\lambda(A):\mathbb S_\lambda(E)\to
\mathbb S_\lambda(F)$. Here $\mathbb S_\lambda(F)=0$ if $\dim F<h$,
and the empty partition gives the identity on $\mathbb C$.
With the induced Hermitian forms,
\[
 \mathbb S_\lambda(AB)=\mathbb S_\lambda(A)\mathbb S_\lambda(B),
 \quad \mathbb S_\lambda(cA)=c^{|\lambda|}\mathbb S_\lambda(A),
 \quad \mathbb S_\lambda(A^*)=\mathbb S_\lambda(A)^*.
\]

Write $E_j=\bigwedge^jV$ and
$e_I=e_{i_1}\wedge\cdots\wedge e_{i_j}$ for
$I=\{i_1<\cdots<i_j\}$. Set
\[
 \mathcal T_\eta(V)=\bigotimes_{j<n}E_j^{\otimes 2r_j},
 \qquad
 v^+=\bigotimes_{j<n}(e_1\wedge\cdots\wedge e_j)^{\otimes 2r_j}.
\]
Let
$\omega_j=(1,\ldots,1,0,\ldots,0)$ be the $j$-th fundamental weight,
then the highest weight of $E_j$ is $\omega_j$ and $2\sum_{j<n}r_j\omega_j=\eta$. The vector $v^+$ has norm one and spans the weight-$\eta$
space of $\mathcal T_\eta(V)$. Thus the Cartan component is
\[\mathcal C_\eta=\operatorname{span}_{\mathbb C}\{\mathcal T_\eta(g)v^+:g\in\mathrm{GL}_n(\mathbb C)\}\simeq\mathbb S_\eta(V).\] Let
$\mathrm{pr}^{\mathrm{Car}}_\eta:\mathcal T_\eta(V)\to\mathcal C_\eta$
be the Cartan product.

For even $d_n$, the lowest harmonic type is the trivial
representation. To construct its vector, put
\begin{equation}\label{eq:Qi}
 Q_j=\sum_{|I|=j}e_I\otimes e_I\in E_j^{\otimes2}
 \qquad(1\le j<n).
\end{equation}
Each $Q_j$ is $O(n)$-fixed because $\bigwedge^jk$ is real orthogonal.
Thus
\begin{equation}\label{eq:qeta}
 \xi_\eta=\bigotimes_{j<n}Q_j^{\otimes r_j},\qquad
 q_\eta=\mathrm{pr}^{\mathrm{Car}}_\eta(\xi_\eta)
\end{equation}
is $O(n)$-fixed. It is nonzero with normalization
$\langle q_\eta,v^+\rangle=
 \langle\xi_\eta,v^+\rangle
 =\prod_{j<n}
 \left(\sum_{|I|=j}
 \langle e_I,e_1\wedge\cdots\wedge e_j\rangle^2\right)^{r_j}
 =1$.

For $v\in\mathcal T_\eta(V)$, define the following polynomial by the matrix coefficient
\begin{equation}\label{cartan:coefficient}
 P_v(X):=\langle\mathcal T_\eta(X)v,v^+\rangle,\qquad X\in M_n(\mathbb C).
\end{equation}
Then by definition,
\begin{equation}\label{cartan:translation}
P_{\mathcal T_\eta(g)v}(X)=P_v(Xg).
\end{equation}
For $X\in M_n(\mathbb C)$ with rows $x_1,\ldots,x_n\in V^*$,
put $\Lambda_j(X)=x_1\wedge\cdots\wedge x_j$.
When $X$ is invertible, these wedges determine its row flag.
Their coordinates in the dual wedge basis are
\begin{equation}\label{eq:flagMinor}
 \Lambda_j(X)=\sum_{|I|=j}p_{j,I}(X)e_I^*,
 \qquad
 p_{j,I}(X)=\Lambda_j(X)(e_I)=\det X_{\{1,\ldots,j\},I},
\end{equation} where $X_{I, J}$ is the minor of $X$ formed by rows indexed by $I$ and columns indexed by $J$. 
We call the $p_{j, I}$'s the \emph{row flag minors}. Note that $\Lambda_j(X)$ and $p_{j, I}(X)$ are defined even if $X$ is singular.

Let $
\mathcal P_\eta
=\operatorname{span}_{\mathbb C}
\left\{\prod_{j<n}\prod_{a=1}^{2r_j}p_{j,I_{j,a}}(X)\right\}$, with the right action of $\mathrm{GL}_n(\mathbb{C})$ by $
(\sigma(g)P)(X)=P(Xg).$

\begin{prop}\label{read:polynomial-model}
The map $v\mapsto P_v$ identifies the Cartan component
$\mathcal C_\eta\simeq\mathbb S_\eta(V)$ with $\mathcal P_\eta$.
The polynomial associated to the highest weight vector $v^+$ is
\[
 P^+(X):=P_{v^+}(X)
       =\prod_{j<n}p_{j,\{1,\ldots,j\}}(X)^{2r_j}.
\]
For every $P\in\mathcal P_\eta$ and \(t=\operatorname{diag}(t_1,\ldots,t_n)\),
$P(tX)=\left(\prod_{i=1}^n t_i^{\eta_i}\right)P(X).$ Moreover, the map
\begin{equation}\label{read:compact-section}
 P\longmapsto f_P|_{K},\qquad f_P(k)=\det(k)^{d_n}P(k),
\end{equation} gives a $K$-equivariant isomorphism between $\mathcal{P}_\eta\otimes \det(\cdot)^{d_n}$ and the minimal $K$-type $\tau\simeq \mathbb{S}_\eta(V)\otimes \det(\cdot)^{d_n}$ of $\pi$.
\end{prop}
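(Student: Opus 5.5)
The plan is to prove the three assertions of Proposition~\ref{read:polynomial-model} in the order listed: the polynomial identification, the explicit form of $P^+$ with its torus homogeneity, and then the $K$-equivariant isomorphism with the minimal $K$-type.

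\textbf{Step 1: the map $v\mapsto P_v$.} By \eqref{cartan:translation}, $v\mapsto P_v$ intertwines $\mathcal T_\eta(g)$ with the right action $\sigma(g)$. Restricted to the irreducible module $\mathcal C_\eta$, the map is nonzero, since $P_{v^+}(1)=\langle v^+,v^+\rangle=1$. By irreducibility it is therefore injective on $\mathcal C_\eta$.

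\textbf{Step 2: the image lies in $\mathcal P_\eta$ and equals it.} Expanding $\mathcal T_\eta(X)v$ for a pure tensor $v$ of basis wedges, pairing against $v^+$ picks out the coefficient of $e_1\wedge\cdots\wedge e_j$ in each factor $\bigwedge^jX\,e_I$. That coefficient is the minor $\det X_{\{1,\ldots,j\},I}=p_{j,I}(X)$, read via the transpose convention matching \eqref{eq:flagMinor}. Hence $P_v$ for a basis tensor is exactly one of the spanning monomials of $\mathcal P_\eta$. This shows that $v\mapsto P_v$ maps all of $\mathcal T_\eta(V)$ onto $\mathcal P_\eta$.

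To get equality on $\mathcal C_\eta$, I would show that $P_v=0$ whenever $v\perp\mathcal C_\eta$. Indeed, $\mathcal T_\eta(X)^*v^+=\mathcal T_\eta(X^*)v^+$ lies in $\mathcal C_\eta$, because $\mathcal C_\eta$ is stable under $\mathcal T_\eta(g)$ for invertible $g$ and, by Zariski density, under all $X$. Therefore
\[
P_v(X)=\langle v,\mathcal T_\eta(X^*)v^+\rangle=0 .
\]
So $\mathcal P_\eta$ is the image of $\mathcal C_\eta$, and Step~1 makes the map an isomorphism. I expect this reduction from the whole tensor space to the Cartan component to be the main point needing care; it rests on the adjoint identity $\mathbb S_\lambda(A^*)=\mathbb S_\lambda(A)^*$ and the density argument.

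\textbf{Step 3: $P^+$ and homogeneity.} Taking $v=v^+$ in the computation of Step~2 gives $P^+=\prod_j p_{j,\{1..j\}}^{2r_j}$. For homogeneity, note that $p_{j,I}(tX)=t_1\cdots t_j\,p_{j,I}(X)$. Each monomial in $\mathcal P_\eta$ therefore scales by $\prod_j(t_1\cdots t_j)^{2r_j}=\prod_i t_i^{\eta_i}$, since $\eta_i=2\sum_{j\ge i}r_j=2R_i$.

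\textbf{Step 4: the isomorphism with $\tau$.} Define $f_P(bk)=\delta^{1/2}\mu(b)\det(k)^{d_n}P(k)$. Well-definedness requires compatibility on $B\cap K=A_n\cap K$. For $t=\operatorname{diag}(e^{i\theta_i})$, we have
\[
\det(t)^{d_n}t^{\eta}=\prod_i e^{i\theta_i(d_n+2R_i)}=\chi_d(t),
\]
because $d_i=d_n+2R_i$. This is exactly the condition for lying in $I(d)$ of \eqref{torus-induction}. The map is $K$-equivariant because right translation matches $\sigma$ and $\det(kk')=\det k\det k'$. It is injective because a polynomial vanishing on $U(n)$ vanishes identically, $U(n)$ being Zariski dense in $\mathrm{GL}_n$.

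Its image is an irreducible $K$-module with highest weight $\eta+d_n(1^n)=d$. Since $\tau_d$ occurs in $I(d)$ with multiplicity one by Frobenius reciprocity, the image is the minimal $K$-type $\tau$.
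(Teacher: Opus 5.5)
Your proposal is correct and takes essentially the paper's route. The minor identity gives surjectivity of $\mathcal T_\eta(V)\to\mathcal P_\eta$, and the $\mathrm{GL}_n$-stability of $\mathcal C_\eta$ kills the complement; you use it through $\mathcal T_\eta(X^*)v^+\in\mathcal C_\eta$, the paper through $\mathcal C_\eta^\perp$. Irreducibility together with $P_{v^+}(1)=1$ gives injectivity. The torus compatibility $d_i=d_n+2R_i$, together with multiplicity one of $\tau_d$ in $I(d)$, identifies the image.

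The only difference is that the paper's proof also checks that $\tau_d$ really is the minimal $K$-type, via the Casimir comparison $c(\lambda)-c(d)>0$ for every other $K$-type of $I(d)$. You instead take this from the text preceding the proposition.
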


\begin{proof}
The identity
$\langle(\bigwedge^jX)e_I,e_1\wedge\cdots\wedge e_j\rangle=p_{j,I}(X)$
gives $P^+$ and maps elementary tensors to the monomials spanning $\mathcal P_\eta$.
Since $\mathcal C_\eta^\perp$ is $\mathrm{GL}_n(\mathbb C)$-stable and
orthogonal to $v^+$, we have
$P_v=P_{\operatorname{pr}^{\mathrm{Car}}_\eta v}$, first for invertible
$X$ and then for all $X$ by polynomiality. Hence
$\mathcal C_\eta\to\mathcal P_\eta$ is surjective,
equivariant by \eqref{cartan:translation}, and nonzero
since $P_{v^+}(1)=1$, so irreducibility makes it an isomorphism.

Each monomial has row-$i$ degree $\sum_{j\ge i}2r_j=\eta_i$.
Hence $P(tX)=t^\eta P(X)$ and, for $t\in U(1)^n$, $h\in K$,
\[
 f_P(tk)=\prod_i t_i^{d_i}f_P(k),\qquad
 R(h)f_P=f_{\det(h)^{d_n}\sigma(h)P}.
\]
These identities make \eqref{read:compact-section} a $K$-equivariant
map into $I(d)$, nonzero since $f_{P^+}(1)=1$. Its source is
irreducible of highest weight $\eta+d_n(1,\ldots,1)=d$, so its
image is precisely that irreducible $K$-type.

To verify minimality, use $C_K=\sum_{i,j}E_{ij}E_{ji}$, with $E_{ij}$
the standard matrix units. Its eigenvalue on the highest weight module $V_\lambda$ of weight $\lambda$ is
$c(\lambda)=\sum_i\lambda_i(\lambda_i+n+1-2i)$.
If $V_\lambda$ occurs in $I(d)$, Frobenius reciprocity makes $d$ a
weight, so $\lambda-d=\sum_{i<n}a_i\alpha_i$ with $a_i\ge0$ and
$\alpha_i$ the simple roots. For $\lambda\ne d$, dominance gives
\[
 c(\lambda)-c(d)=\sum_{i<n}a_i
 (\lambda_i-\lambda_{i+1}+d_i-d_{i+1}+2)>0.
\]
Thus $c(d)$ is the smallest Casimir eigenvalue among the
$K$-types occurring in $I(d)$, and only the type of highest
weight $d$ has this eigenvalue. By Frobenius reciprocity, $V_d$ occurs with multiplicity
$\dim V_d[d]=1$, where $V_d[d]$ is its subspace of highest-weight $d$. Consequently, the
$c(d)$-eigenspace in the $K$-finite part of $I(d)$ is precisely
$\tau$. This confirms the minimality of the constructed
$K$-type.
\end{proof}

\subsubsection{Polynomial realization of test vectors and their Iwasawa coordinates}
\label{read:lowest-model}
Use the complex bilinear form $(x,y)=\sum_i x_i y_i$ and its
induced form on exterior powers. Thus
$(x_1\wedge\cdots\wedge x_j,y_1\wedge\cdots\wedge y_j)
=\det((x_a,y_b))$. Retain the notations for $\Lambda_j$ in the last subsection, put
\begin{equation}\label{read:Gram}
 \mathcal G_j=(\Lambda_j,\Lambda_j)=\sum_{|I|=j}p_{j,I}^2.
\end{equation}
\begin{lem}\label{Cartan-product-qeta}
The polynomial associated to $q_\eta$ in \eqref{eq:qeta} is $P_{q_\eta}=\prod_{j<n}\mathcal G_j^{r_j}$.
\end{lem}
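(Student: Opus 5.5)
The plan is a direct computation from the definition \eqref{cartan:coefficient}. Since $\mathcal C_\eta^\perp$ is $\mathrm{GL}_n(\mathbb C)$-stable and orthogonal to $v^+$, the proof of Proposition~\ref{read:polynomial-model} gives $P_{q_\eta}=P_{\mathrm{pr}^{\mathrm{Car}}_\eta\xi_\eta}=P_{\xi_\eta}$. This reduces the lemma to computing
\[
 P_{\xi_\eta}(X)=\bigl\langle\mathcal T_\eta(X)\xi_\eta,v^+\bigr\rangle .
\]
No information about the Cartan projection itself is needed.

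Both $\xi_\eta=\bigotimes_{j<n}Q_j^{\otimes r_j}$ and $v^+=\bigotimes_{j<n}(e_1\wedge\cdots\wedge e_j)^{\otimes 2r_j}$ are elementary tensors adapted to the decomposition $\mathcal T_\eta(V)=\bigotimes_{j<n}(E_j^{\otimes 2})^{\otimes r_j}$. The operator $\mathcal T_\eta(X)$ acts factorwise by $\bigwedge^jX\otimes\bigwedge^jX$. The Hermitian form on $\mathcal T_\eta(V)$ is the tensor product form. Hence the matrix coefficient factors as
\[
 P_{\xi_\eta}(X)=\prod_{j<n}\Bigl\langle\bigl(\textstyle\bigwedge^jX\otimes\bigwedge^jX\bigr)Q_j,\;(e_1\wedge\cdots\wedge e_j)^{\otimes2}\Bigr\rangle^{r_j}.
\]

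Each factor is evaluated by expanding $Q_j=\sum_{|I|=j}e_I\otimes e_I$. The identity $\langle(\bigwedge^jX)e_I,e_1\wedge\cdots\wedge e_j\rangle=p_{j,I}(X)$ from the proof of Proposition~\ref{read:polynomial-model} then gives
\[
 \sum_{|I|=j}p_{j,I}(X)^2=\mathcal G_j(X)
\]
by \eqref{read:Gram}. Raising to the $r_j$-th power and taking the product over $j<n$ yields $P_{q_\eta}=\prod_{j<n}\mathcal G_j^{r_j}$.

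The only point needing care is that the Hermitian form is linear in the first variable. The factor $\langle(\bigwedge^jX)e_I,e_{[1,j]}\rangle$ is therefore the genuine minor $p_{j,I}(X)$ and not its conjugate, so the result is $p_{j,I}^2$ rather than $|p_{j,I}|^2$. This matches the normalization check $\langle q_\eta,v^+\rangle=1$ already made, which is the case $X=1$. I expect no real obstacle beyond this bookkeeping.
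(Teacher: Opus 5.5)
Your proposal is correct and follows essentially the same route as the paper. You reduce to $P_{\xi_\eta}$ via the $\mathrm{GL}_n(\mathbb C)$-stability of $\mathcal C_\eta^\perp$, then evaluate the matrix coefficient using $Q_j=\sum_{|I|=j}e_I\otimes e_I$ and $\langle(\bigwedge^jX)e_I,e_1\wedge\cdots\wedge e_j\rangle=p_{j,I}(X)$; the only difference is cosmetic, since you factor over the tensor factors before expanding each $Q_j$, whereas the paper distributes first and then uses linearity of $v\mapsto P_v$.
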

\begin{proof}
Since $\xi_\eta-q_\eta\in\mathcal C_\eta^\perp$,
by Proposition~\ref{read:polynomial-model}, we have
$P_{q_\eta}=P_{\xi_\eta}$. Substituting
$Q_j=\sum_{|I|=j}e_I\otimes e_I$ into
$\xi_\eta=\bigotimes_{j<n}Q_j^{\otimes r_j}$ and distributing the tensor products,
we obtain a sum of elementary tensors. By \eqref{cartan:coefficient},
each pair $e_I\otimes e_I$ contributes $p_{j,I}(X)^2$.
Since $v\mapsto P_v$ is linear, we obtain
\[
 P_{\xi_\eta}(X)
 =\prod_{j<n}\left(\sum_{|I|=j}p_{j,I}(X)^2\right)^{r_j}
 =\prod_{j<n}\mathcal G_j(X)^{r_j}.
\]
\end{proof}
When $\epsilon=1$, the harmonic type of lowest degree in $\tau$ is $\mathcal H_n^n$.
We construct $B_n(\Lambda_\bullet;e)$ to be harmonic of degree
$n$ in $e$, of degree two in each $\Lambda_j$, and to satisfy
$B_n(\Lambda_\bullet h;e)=\det(h)B_n(\Lambda_\bullet;he)$
for $h\in O(n)$. 
Multiplication by $\prod_{j<n}\mathcal G_j^{r_j-1}$
gives the required degree $2r_j$ in each $\Lambda_j$.
Since the $\mathcal G_j$ are $O(n)$-invariant and
$he_n^t=e_n^t$ for $h\in O(n-1)$, evaluation at $e_n^t$
then gives an $O(n-1)$-fixed section of harmonic degree $n$,
provided it is nonzero.

Fix the standard ordered basis $e_1,\ldots,e_n$ of
$\mathbb C^n$ and put $\mathrm{vol}=e_1\wedge\cdots\wedge e_n$.
Define the complex-linear
Hodge map $*: \bigwedge^j\mathbb C^n\rightarrow \bigwedge^{n-j}\mathbb C^n$ characterized by
$\alpha\wedge *\beta=(\alpha,\beta)\mathrm{vol}$ for
$\alpha,\beta\in\bigwedge^j\mathbb C^n$. One should distinguish it from
the notation $A^*$ for a matrix $A$, which still means its Hermitian adjoint.
Identify $(\mathbb C^n)^*$ with $\mathbb C^n$ by the standard complex bilinear
form, and use the induced identification on exterior powers.
Set $\Lambda_0=1$ and, for a column vector $e\in\mathbb C^n$, define
\begin{equation}\label{read:B-definition}
 b_j(e)=(\Lambda_j,\Lambda_{j-1}\wedge e)\ (j<n),\qquad
 b_n(e)=(*\Lambda_{n-1},e),\qquad B_n(\Lambda_\bullet;e)=\prod_{j=1}^n b_j(e).
\end{equation}
These expressions are polynomial in the entries of $X$ and
the coordinates of $e$, and therefore remain defined for every
$X\in M_n(\mathbb C)$, including when some $\Lambda_j(X)$ vanish.

\begin{lem}\label{read:B-harmonic}
$B_n(\Lambda_\bullet;e)$ is harmonic of degree $n$ in $e$ and
has degree two in each $\Lambda_j$, and we have \begin{equation}\label{B_n-O(n)-equivariance}
    B_n(\Lambda_\bullet k;e)=\det(k)B_n(\Lambda_\bullet;ke),\quad \text{for all}\quad k\in O(n).
\end{equation} Suppose $k\in O(n)$ has rows $k_1,\ldots,k_n$, and
$\Lambda_j=k_1\wedge\cdots\wedge k_j$ for $1\le j<n$, then
\begin{equation}\label{read:B-orthogonal}
 \mathcal G_j=1,\qquad B_n(\Lambda_\bullet;e)=\det(k)\prod_j(k_j,e).
\end{equation}
\end{lem}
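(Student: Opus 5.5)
The plan is to read each factor $b_j(e)$ as a linear form $e\mapsto(\ell_j,e)$ in $e$, with a vector $\ell_j=\ell_j(X)$ depending polynomially on the rows $x_1,\ldots,x_n$ of $X$. All four assertions then reduce to properties of these vectors.

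\emph{Identifying $\ell_j$ and the degree count.} For $j<n$, the map $y\mapsto(\Lambda_j,\Lambda_{j-1}\wedge y)$ is linear in $y$, so it is $y\mapsto(\ell_j,y)$ for a unique vector $\ell_j$. This $\ell_j$ is linear in $\Lambda_j$ and linear in $\Lambda_{j-1}$. Also $\ell_n=*\Lambda_{n-1}$ is linear in $\Lambda_{n-1}$. Hence $B_n$ is a product of $n$ linear forms in $e$, so it is homogeneous of degree $n$ in $e$. For $1\le j\le n-1$, $\Lambda_j$ enters exactly through $b_j$ and $b_{j+1}$, each linearly; recall $b_n$ contains $\Lambda_{n-1}$. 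So $B_n$ has degree two in each $\Lambda_j$.

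\emph{Harmonicity.} For linear forms one has
\[
\Delta_n\prod_a(\ell_a,e)=2\sum_{a<b}(\ell_a,\ell_b)\prod_{c\ne a,b}(\ell_c,e).
\]
So it suffices to show $(\ell_a,\ell_b)=0$ for $a<b$, as a polynomial identity in $X$.
\begin{itemize}
\item Since $(\ell_j,y)=(\Lambda_j,\Lambda_{j-1}\wedge y)$ vanishes when $y\in\operatorname{span}(x_1,\ldots,x_{j-1})$, $\ell_j$ is orthogonal (for the bilinear form) to $x_1,\ldots,x_{j-1}$.
\item By the same defining identity, $\ell_j$ is a combination of $x_1,\ldots,x_j$. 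Expanding $\Lambda_j$ shows $\ell_j$ is an interior contraction of $\Lambda_j$ by $\Lambda_{j-1}$.
\item Likewise $*\Lambda_{n-1}$ is orthogonal to $x_1,\ldots,x_{n-1}$, because $x_i\wedge\Lambda_{n-1}=0$ for $i\le n-1$.
\end{itemize}
For $a<b$ this gives $\ell_a\in\operatorname{span}(x_1,\ldots,x_a)\subset\operatorname{span}(x_1,\ldots,x_{b-1})\perp\ell_b$. Hence $\Delta_nB_n=0$. This is the step I expect to require the most care. The orthogonality must be checked as a polynomial identity, which is harmless since both sides are polynomial; and it uses the convention that $\Delta_n$ is dual to the bilinear form $(\cdot,\cdot)$.

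\emph{Equivariance.} For $k\in O(n)$, $\Lambda_j(Xk)=\Lambda_j(X)\bigwedge^jk$. The bilinear form on exterior powers satisfies $(\alpha k,\beta)=(\alpha,\beta k^{-1})$, and $(\Lambda_{j-1}k)\wedge e=(\Lambda_{j-1}\wedge ek^{-1})k$ with $e$ viewed as a row. Together these give $b_j(\Lambda_\bullet k;e)=b_j(\Lambda_\bullet;ke)$ for $j<n$. For the Hodge map, $*(\alpha k)=\det(k)(*\alpha)k$ holds for orthogonal $k$, since $\mathrm{vol}\,k=\det(k)\mathrm{vol}$ and the form is invariant. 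This yields the single factor $\det(k)$ in \eqref{B_n-O(n)-equivariance}.

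\emph{Evaluation at orthonormal rows.} Now let $\Lambda_j=k_1\wedge\cdots\wedge k_j$ with orthonormal rows $k_i$.
\begin{itemize}
\item $\mathcal G_j=\det((k_a,k_b))_{a,b\le j}=1$.
\item $b_j(e)$ is the determinant of the Gram matrix whose first $j-1$ columns are those of $I_{j-1}$ padded by a zero row, and whose last column is $((k_a,e))_{a\le j}$. This determinant equals $(k_j,e)$.
\item From $k_1\wedge\cdots\wedge k_n=\det(k)\mathrm{vol}$ and $(\Lambda_{n-1},\Lambda_{n-1})=1$ one checks that $*\Lambda_{n-1}=\det(k)k_n$. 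Indeed both sides satisfy the defining identity against every $\alpha\in\bigwedge^{n-1}$, by testing on $\alpha=k_I$.
\end{itemize}
Therefore $b_n=\det(k)(k_n,e)$, which gives \eqref{read:B-orthogonal}.
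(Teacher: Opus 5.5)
Your proposal is correct and follows essentially the same route as the paper. You write each $b_j(e)$ as $(u_j,e)$ with $u_j\in\operatorname{span}\{x_1,\ldots,x_j\}$ orthogonal to $x_1,\ldots,x_{j-1}$ (and $u_n=*\Lambda_{n-1}$ orthogonal to $x_1,\ldots,x_{n-1}$), which kills the cross terms in $\Delta_eB_n$. You use $O(n)$-invariance of the bilinear form together with the $\det(k)$ twist of the Hodge map for the equivariance, and you evaluate the Gram determinants directly at orthonormal rows. The only cosmetic difference is how you identify $*\Lambda_{n-1}=\det(k)k_n$: you check the defining Hodge identity on the basis $k_I$, whereas the paper notes that $*\Lambda_{n-1}$ is a multiple of $k_n$ and fixes the scalar from $\Lambda_{n-1}\wedge k_n=\det(k)\mathrm{vol}$.
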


\begin{proof}
Write $\Lambda_j=x_1\wedge\cdots\wedge x_j$. For $j<n$,
\begin{equation}\label{determinant-formula-bj(e)}
 b_j(e)=\det\bigl((x_a,y_b)\bigr)_{1\le a,b\le j},
 \qquad (y_1,\ldots,y_j)=(x_1,\ldots,x_{j-1},e).
\end{equation}
Expansion along the last column gives $b_j(e)=(u_j,e)$ with
$u_j\in\operatorname{span}\{x_1,\ldots,x_j\}$.
Substitution of $e=x_i$ for $i<j$ gives two equal columns, so
$(u_j,x_i)=0$. Also $u_n=*\Lambda_{n-1}$ is perpendicular to
$x_1,\ldots,x_{n-1}$. Hence $(u_i,u_j)=0$ for $i\ne j$ and
\[
 \Delta_e B_n=2\sum_{i<j}(u_i,u_j)
                  \prod_{a\ne i,j}(u_a,e)=0.
\]
The degree in $e$ is $n$. Each $\Lambda_j$ occurs in exactly two
successive factors, giving degree two in that wedge.

For $k\in O(n)$, replacing $x_a$ by $x_ak$ in
\eqref{determinant-formula-bj(e)} replaces $b_j(e)$ by $b_j(ke)$.
For the last factor, the identity
\[
 (\Lambda_{n-1}k)\wedge e^t
 =\det(k)\Lambda_{n-1}\wedge(ke)^t
\]
gives the additional factor $\det(k)$. Multiplication proves
\eqref{B_n-O(n)-equivariance}. Now suppose that $k\in O(n)$ has rows $k_1,\ldots,k_n$ and
$\Lambda_j=k_1\wedge\cdots\wedge k_j$.
Since $(k_a,k_b)=\delta_{ab}$,
$
 \mathcal G_j=(\Lambda_j,\Lambda_j)
 =\det\bigl((k_a,k_b)\bigr)_{1\le a,b\le j}=1.$
The formula \eqref{determinant-formula-bj(e)} for $b_j$ gives
$b_j(e)=(k_j,e)$ for $j<n$.
Moreover, $*\Lambda_{n-1}$ is a scalar multiple of $k_n$,
and the Hodge identity gives
\[
 (*\Lambda_{n-1},k_n)\mathrm{vol}
 =\Lambda_{n-1}\wedge k_n=\det(k)\mathrm{vol}.
\]
Since $(k_n,k_n)=1$, we have
$*\Lambda_{n-1}=\det(k)k_n$. Therefore
$B_n(\Lambda_\bullet;e)=\det(k)\prod_{j=1}^n(k_j,e)$,
which proves \eqref{read:B-orthogonal}.
\end{proof}

Define
\begin{equation}\label{read:lowest-polynomials}
 P_{n,0}=\prod_{j<n}\mathcal G_j^{r_j},\qquad
P_{n,1}=B_n(\Lambda_\bullet;e_n^t)\prod_{j<n}\mathcal G_j^{r_j-1}.
\end{equation}
Both belong to $\mathcal P_\eta$, since their degree in each $\Lambda_j$
is $2r_j$ and $r_j\ge1$. We henceforth identify $v_{n,\epsilon}$
with $P_{n,\epsilon}$ by Proposition~\ref{read:polynomial-model}. This gives a polynomial realization of the cohomological test vector. The following theorem gives a precise connection between this realization and the terms in the Jacquet integral.

\begin{thm}\label{read:lowest-type}
For $\epsilon=0,1$, the section
$f_{n,\epsilon}(k)=\det(k)^{d_n}P_{n,\epsilon}(k)$
spans the $O(n-1)$-fixed line in the unique $O(n)$-subrepresentation
of the minimal $K$-type isomorphic to $\mathcal H_n^{n\epsilon}$.
For $u\in N_n(\mathbb C)$, let $\dot w_nu=b(u)k(u)$ be its Iwasawa decomposition. Denote the rows of $\dot w_nu$ and $k(u)$ by $m_i$ and $k_i$ respectively. Let $b_i$ be the $i$-th positive diagonal entry of $b(u)$.
Then they satisfy the following recursive relations
\begin{equation}\label{read:reverse-GS}
 q_i=m_i-\sum_{j>i}\langle m_i,k_j\rangle k_j,\qquad
 b_i=\|q_i\|,\qquad k_i=q_i/b_i\quad(i=n,n-1,\ldots,1).
\end{equation}
Set $\Lambda_j(k(u))=k_1\wedge\cdots\wedge k_j$.
Then we have
\begin{equation}\label{read:cell-formula}
 f_{n,\epsilon}(\dot w_nu)
 =(-1)^{\frac{n(n-1)}{2}\epsilon}\prod_i b_i^{n+1-2i}
 \begin{cases}
  \displaystyle\prod_{j<n}\mathcal G_j(\Lambda_j(k(u)))^{r_j},&\epsilon=0,\\
  \displaystyle B_n(\Lambda_\bullet(k(u));e_n^t)
             \prod_{j<n}\mathcal G_j(\Lambda_j(k(u)))^{r_j-1},&\epsilon=1.
 \end{cases}
\end{equation}
\end{thm}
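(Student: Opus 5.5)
The plan has three steps: identify the $O(n)$-type spanned by $f_{n,\epsilon}$, show that the translates form a copy of $\mathcal H_n^{n\epsilon}$ and that the vector is fixed by $O(n-1)$, and then read off \eqref{read:reverse-GS} and \eqref{read:cell-formula} from the Iwasawa decomposition and the induction law \eqref{read:induction}. Throughout, Proposition~\ref{read:polynomial-model} lets me work with the polynomials $P_{n,\epsilon}\in\mathcal P_\eta$; the $K$-action is $R(h)f_P=f_{\det(h)^{d_n}\sigma(h)P}$.

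\emph{The case $\epsilon=0$.} By Lemma~\ref{Cartan-product-qeta}, $P_{n,0}=P_{q_\eta}$. The vector $q_\eta$ is $O(n)$-fixed and nonzero, since $\langle q_\eta,v^+\rangle=1$. Because $d_n$ is even, $\det(h)^{d_n}=1$ for $h\in O(n)$. Hence $f_{n,0}$ is a nonzero $O(n)$-fixed vector of $\tau$. By Theorem~\ref{thm:structure} the trivial type $\mathcal H_n^0$ occurs with multiplicity one, so $f_{n,0}$ spans its $O(n-1)$-fixed line.

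\emph{The case $\epsilon=1$.} Consider the map
\[
 \Theta:\ \mathcal H_n^n\ni H_e\longmapsto B_n(\Lambda_\bullet(X);e)\prod_{j<n}\mathcal G_j^{r_j-1},
\]
extended linearly from the polynomial $e\mapsto B_n(\Lambda_\bullet;e)$, which Lemma~\ref{read:B-harmonic} shows is harmonic of degree $n$ in $e$. Equivalently, $\Theta$ contracts the $e$-slot against elements of $\mathcal H_n^n$. Take $h\in O(n)$. The $\mathcal G_j$ are $O(n)$-invariant, and \eqref{B_n-O(n)-equivariance} holds. Since $d_n$ is odd, $\det(h)^{d_n}\det(h)=1$. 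Together these give
\[
 R(h)f_{n,1}(k)=\det(k)^{d_n}\,B_n(\Lambda_\bullet(k);he_n^t)\prod_{j<n}\mathcal G_j^{r_j-1}.
\]
So the $O(n)$-span of $f_{n,1}$ is the image of $\mathcal H_n^n$ under an $O(n)$-equivariant map. That image is either $0$ or isomorphic to $\mathcal H_n^n$. Moreover $f_{n,1}$ is $O(n-1)$-fixed because $he_n^t=e_n^t$ for $h\in O(n-1)$.

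The key point is nonvanishing. I evaluate at a real orthogonal $k$. By \eqref{read:B-orthogonal}, $\mathcal G_j=1$ there, so
\[
 f_{n,1}(k)=\det(k)^{d_n+1}\prod_j(k_j,e_n),
\]
which is nonzero for any $k\in O(n)$ whose last column has no zero entry. Note that $k=1$ would not work, since $b_j(e_n)=0$ for $j<n$. Multiplicity one in Theorem~\ref{thm:structure} then identifies this copy with the unique $\mathcal H_n^n$, and $f_{n,1}$ spans its $O(n-1)$-fixed line.

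\emph{Iwasawa coordinates.} Write $\dot w_nu=b(u)k(u)$ with $b(u)$ upper triangular with positive diagonal. Then the rows satisfy $m_i=b_ik_i+\sum_{j>i}b_{ij}k_j$, and the $k_j$ are Hermitian-orthonormal. Taking inner products with $k_j$ for $j>i$ gives $b_{ij}=\langle m_i,k_j\rangle$. Therefore $q_i=b_ik_i$, which is Gram–Schmidt run from the bottom row upward; this is \eqref{read:reverse-GS}.

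\emph{The cell formula.} By \eqref{read:induction} with $\nu_i=m/2$,
\[
 f(bk)=\prod_i b_i^{n+1-2i}\cdot\prod_i b_i^{m}\cdot f(k),
\]
because the unitary characters $\chi_{d_i}$ are trivial on positive reals. Since $|\det(\dot w_nu)|=1$, we have $\prod_i b_i=1$, so the factor $\prod_i b_i^m$ disappears. Also $\det k(u)=\det(\dot w_n)/\det b(u)=(-1)^{N}$, and hence $\det(k(u))^{d_n}=(-1)^{N\epsilon}$. Finally, $P_{n,\epsilon}(k(u))$ is literally the displayed expression in the wedges $\Lambda_j(k(u))$. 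Here $\mathcal G_j$ is formed with the complex bilinear form, so it need not equal $1$ for unitary, non-real $k(u)$.

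The main obstacles are the nonvanishing in the case $\epsilon=1$, handled by the real orthogonal evaluation above, and keeping the sign and the factor $\prod_i b_i^m$ straight in the cell formula.
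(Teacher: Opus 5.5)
Your proposal is correct and follows the paper's proof essentially step by step. For $\epsilon=0$ you use Lemma~\ref{Cartan-product-qeta}. For $\epsilon=1$ you use the equivariance $F(kh,e)=F(k,he)$, with nonvanishing checked at a real orthogonal matrix whose last column has no zero entries (the paper picks last column $n^{-1/2}(1,\ldots,1)^t$). Multiplicity one then comes from Theorem~\ref{thm:structure}, and the final step is the same Iwasawa/induction-law computation with $\prod_i b_i=1$ and $\det k(u)=(-1)^N$. The only difference is cosmetic: you contract against $\mathcal H_n^n$, whereas the paper maps from $(\mathcal H_n^n)^*$ via evaluation functionals, and these are equivalent under the invariant form.
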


\begin{proof}For $\epsilon=0,1$, the
polynomial $P_{n,\epsilon}$ has degree $2r_j$ in each
$\Lambda_j$. By Proposition~\ref{read:polynomial-model}
$f_{n,\epsilon}$ lies in the minimal $K$-type.
When $\epsilon=0$, Lemma~\ref{Cartan-product-qeta} shows that
$f_{n,0}$ is nonzero and $O(n)$-fixed. For $\epsilon=1$ and $k\in U(n)$, put
\[
 F(k,e)=\det(k)^{d_n}B_n(\Lambda_\bullet(k);e)
             \prod_{j<n}\mathcal G_j(\Lambda_j(k))^{r_j-1}.
\]
Lemma~\ref{read:B-harmonic} gives $F(k,\cdot)\in\mathcal H_n^n$
. Moreover, since $d_n$ is odd, for $k\in U(n)$ and $h\in O(n)$ we have
\[
\det(kh)^{d_n}B_n(\Lambda_\bullet(kh);e)
 =\det(k)^{d_n}\det(h)^{d_n+1}
       B_n(\Lambda_\bullet(k);he)
=\det(k)^{d_n}B_n(\Lambda_\bullet(k);he).
\]
Thus $F(kh,e)=F(k,he)$ for $k\in U(n)$ and $h\in O(n)$. The polynomial
$B_n(\Lambda_\bullet;e)\prod_{j<n}\mathcal G_j^{r_j-1}$
has degree $2r_j$ in the coordinates of each $\Lambda_j$.
Its coefficients in $e$ therefore belong to $\mathcal P_\eta$.
By Proposition~\ref{read:polynomial-model}, as a polynomial in $e$ ,each coefficient
of $F(k,e)$ is a function of $k$
valued in $\tau$.
Consequently, the map
\[
 T:(\mathcal H_n^n)^*\longrightarrow\tau,\qquad
 (T\ell)(k)=\ell(F(k,\cdot))
\]
is well-defined. The $O(n)$-actions on $\mathcal H_n^n$ and its dual are $(h\cdot H)(e)=H(h^{-1}e)$, $(h\cdot\ell)(H)=\ell(h^{-1}\cdot H)$.
The identity $F(kh,e)=F(k,he)$ implies that
$T(h\cdot\ell)=R(h)T(\ell)$, so $T$ is $O(n)$-equivariant.

The map $\operatorname{ev}_{e_n^t}\in (\mathcal H_n^n)^*$ defined by evaluation at the column vector $e_n^t$ is $O(n-1)$-fixed,
and $T(\operatorname{ev}_{e_n^t})=f_{n,1}$.
Choose $k_0\in O(n)$ with last column
$n^{-1/2}(1,\ldots,1)^t$. Equation~\eqref{read:B-orthogonal}
gives $f_{n,1}(k_0)=n^{-n/2}$, so $T\ne0$.
Since $(\mathcal H_n^n)^*\simeq\mathcal H_n^n$ is irreducible,
$T$ is injective. Thus $f_{n,1}$ is $O(n-1)$-fixed and
generates a copy of $\mathcal H_n^n$.
Together with the case $\epsilon=0$,
Theorem~\ref{thm:structure} identifies $f_{n,\epsilon}$ with the unique $O(n-1)$-fixed lines corresponding to $\mathbf j=\mathbf 0$ in the minimal $K$-type in both parities of $\epsilon$.

Write $\dot w_nu=b(u)k(u)$, with $b(u)\in B_n(\mathbb{C})$
and $b_i=b(u)_{ii}>0$. Since the rows $k_i$ are orthonormal,
\[
 m_i=\sum_{j\ge i}b(u)_{ij}k_j,\qquad
 \langle m_i,k_j\rangle=b(u)_{ij}\quad(j\ge i).
\]
Consequently $q_i=b_i k_i$, which proves
\eqref{read:reverse-GS}. Moreover, as $\det b(u)\det k(u)=\det \dot{w}_n u=\det\dot{w}_n=(-1)^{\frac{n(n-1)}{2}}$, $\det k(u)\in\{\pm 1\}$, and $\det b(u)=\prod_i b_i>0$, we have
$\det b(u)=1$, and $\det k(u)=(-1)^{\frac{n(n-1)}{2}}$.
Since each diagonal entry $b_i$ is positive,
$\chi_{d_i}(b_i)=1$ and $\mu_i(b_i)=b_i^m$. Hence \eqref{read:induction} gives
\[
 \begin{aligned}
 f_{n,\epsilon}(\dot w_nu)
 &=\prod_i b_i^{n+1-2i}(\det b(u))^m
       \det(k(u))^{d_n}P_{n,\epsilon}(k(u))\\
 &=(-1)^{\frac{n(n-1)}{2}\epsilon}\prod_i b_i^{n+1-2i}
       P_{n,\epsilon}(k(u)),
 \end{aligned}
\]
because $d_n\equiv\epsilon\pmod2$.
Substituting the definition of $P_{n,\epsilon}$ proves
\eqref{read:cell-formula}.
\end{proof}

\subsubsection{Recurrence of test vectors by Godement section}
\label{pub:Godement}
Put $J=\binom{I_{n-1}}{0}$ and $J_0=\binom{0}{I_{n-1}}$. For a rank-$(n-1)$ section $f$
with inducing characters $\mu_1,\ldots,\mu_{n-1}$ and a Schwartz function $\phi\in\mathcal S(M_{n,n-1}(\mathbb C))$, define
\begin{equation}\label{read:Jacquet-column}
 \mathcal G(f,\phi)(g)=\mu_n(\det g)|\det g|_{\mathbb C}^{-(n-1)/2}
 \int_{G_{n-1}(\mathbb C)}\phi(g^{-1}Jh)f(h)
       \mu_n(\det h)^{-1}|\det h|_{\mathbb C}^{n/2}\,dh.
\end{equation}
This is Jacquet's construction of the Godement section in terms of upper triangular Borel subgroups. Indeed, in his formula
\cite[(7.1)]{jacquet2009} using lower triangular Borels, take the characters $\mu_i^{-1}$,
the section $f\circ\iota$, and the kernel $A\mapsto\phi(A^t)$,
where $\iota(g)=g^{-t}$. Evaluation at $\iota(g)$ followed by
$h\mapsto h^t$ gives \eqref{read:Jacquet-column}, since
$((h,0)\iota(g))^t=g^{-1}Jh^t$.

Set $E=\mathbb S_\eta(\mathbb C^{n-1})$ and let $\rho_E$ denote
its $\mathrm{GL}_{n-1}(\mathbb C)$-action. We realize $E$ as
\[
 E=V_{n-1}\otimes
 \mathbb C\omega_{n-1}^{\otimes(d_{n-1}-d_n)},\qquad
 \omega_j=e_1\wedge\cdots\wedge e_j.
\]
Define $D_{n-1}:E\to V_{n-1}$ by
$D_{n-1}(v\otimes\omega_{n-1}^{\otimes(d_{n-1}-d_n)})=v$.
Since $\|\omega_{n-1}\|=1$ and
$(\bigwedge^{n-1}h)\omega_{n-1}=\det(h)\omega_{n-1}$, it is unitary and
\begin{equation}\label{pub:determinant-line}
 D_{n-1}\rho_E(h)=\det(h)^{d_{n-1}-d_n}
 \mathbb S_{\eta^{[n-1]}}(h)D_{n-1}.
\end{equation}
Our unit highest vectors satisfy
$v_E^+=v_{n-1}^+\otimes\omega_{n-1}^{\otimes(d_{n-1}-d_n)}$,
$D_{n-1}v_E^+=v_{n-1}^+$, and
$\mathbb S_\eta(J)v_E^+=v_n^+$.

For $1\le j\le n$ and $v\in V_j$, let $f_{j,v}$ be the section
in the normalized principal series induced from
$\mu_1,\ldots,\mu_j$, where
$\mu_i=\chi_{d_i}|\cdot|_{\mathbb C}^{m/2}$, determined by
\begin{equation}\label{eq:compact-section}
     f_{j,v}(k)=\det(k)^{d_j}
 \left\langle\mathbb S_{\eta^{[j]}}(k)v,v_j^+\right\rangle,
 \qquad k\in U(j).
\end{equation}
Its extension to $G_j(\mathbb C)$ is given by
$f_{j,v}(bk)=\delta_{B_j(\mathbb C)}(b)^{1/2}
\bigl(\prod_{i=1}^j\mu_i(b_{ii})\bigr)f_{j,v}(k)$.
In particular, $f_{n,v}=f_{P_v}$ in the notation of
\eqref{read:compact-section}, and
$f_{n,v_{n,\epsilon}}=f_{n,\epsilon}$. For $u\in E\setminus\{0\}$ and $v\in V_n$, put
\[
 \phi_{u,v}(X)=e^{-2\pi\operatorname{tr}(X^*X)}
                  \langle v,\mathbb S_\eta(X)u\rangle,
 \qquad X\in M_{n,n-1}(\mathbb C).
\]
By definition of the rank-$(n-1)$ section and the unitary identity
\eqref{pub:determinant-line}, for $k_0\in U(n-1)$, we have
\begin{equation}\label{eq:U(n-1)-section}
 f_{n-1,D_{n-1}u}(k_0)=\det(k_0)^{d_n}
 \langle\rho_E(k_0)u,v_E^+\rangle.
\end{equation}

Let $h=z t k_0$ be its Iwasawa decomposition, and fix the normalization of the Haar measure as follows:
\begin{equation}\label{pub:Godement-measure}
 dh=2^{(n-1)(n+2)/2}\delta_{B_{n-1}(\mathbb C)}(t)^{-1}
          dz\,d^\times t\,dk_0,
 \qquad\operatorname{vol}(U(n-1), dk_0)=1.
\end{equation}
Here $dz$ is the product of the measures $d^2z_{ij}$, $d^\times t=\prod_{i=1}^{n-1}\frac{dt_i}{t_i}$. The factor $2^{(n-1)(n+2)/2}$ is chosen to absorb the powers of $2$ arising
from the Gaussian integrations below, so that the scalar
in \eqref{pub:Godement-scalar} is
$C_n=\prod_{i<n}\Gamma_{\mathbb C}(1+b_{in})$. For rank one, put $V_1=\mathbb C$ and $v_{1,\epsilon}=1$.

\begin{prop}\label{pub:Godement-identity}
For $f_{j,v}$ defined as in \eqref{eq:compact-section}, $j=n-1,n$, the integral
\eqref{read:Jacquet-column} is absolutely convergent and
\begin{equation}\label{pub:Godement-scalar}
 \mathcal G(f_{n-1,D_{n-1}u},\phi_{u,v})
   =C_n\frac{\|u\|^2}{\dim E}f_{n,v},
 \qquad C_n=\prod_{i<n}\Gamma_{\mathbb C}(1+b_{in}).
\end{equation}
For $u=D_{n-1}^{-1}v_{n-1,\epsilon}$ and $v=v_{n,\epsilon}$,
the Godement section is therefore the nonzero multiple
$C_n\|u\|^2(\dim E)^{-1}f_{n,\epsilon}$.
\end{prop}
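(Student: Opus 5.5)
We will prove \eqref{pub:Godement-scalar} in three steps. First, show that $\mathcal G(f_{n-1,D_{n-1}u},\phi_{u,v})$ is a section of the rank-$n$ principal series. Second, show that it lies in the minimal $K$-type and depends on $v$ exactly as $f_{n,v}$ does, so that it equals $c\,f_{n,v}$ for some scalar $c$. Third, compute $c$ by evaluating both sides at $g=1$.

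\emph{Equivariance and convergence.} Substitute $h\mapsto b'h$ with $b'\in B_{n-1}(\mathbb C)$ and $g\mapsto bg$ with $b\in B_n(\mathbb C)$, in the standard way following \cite[Section~7]{jacquet2009}. This shows that $\mathcal G(f,\phi)$ satisfies \eqref{read:induction} for the characters $\mu_1,\ldots,\mu_n$. Absolute convergence follows from the Iwasawa measure \eqref{pub:Godement-measure}. The Gaussian $e^{-2\pi\operatorname{tr}(X^*X)}$ in $\phi_{u,v}$ dominates the polynomial growth of $f$ in $t$. The factor $|\det h|_{\mathbb C}^{n/2}$ together with the polynomial factor $\langle v,\mathbb S_\eta(X)u\rangle$ gives integrable powers of $t_i$ near $0$, because the exponents $\mu_i\mu_n^{-1}$ are unitary up to the shift.

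\emph{$K$-type and dependence on $v$.} Restrict to $g=k\in U(n)$. The map $v\mapsto\mathcal G(f,\phi_{u,v})|_K$ is conjugate-linear, and it is $U(n)$-equivariant by the substitution $X\mapsto k^{-1}X$. This uses the unitarity identity $\langle v,\mathbb S_\eta(k^{-1}X)u\rangle=\langle\mathbb S_\eta(k)v,\mathbb S_\eta(X)u\rangle$ and the invariance of $\operatorname{tr}(X^*X)$. Hence the image is the $\tau$-isotypic piece, which occurs with multiplicity one in $I(d)$. By Schur's lemma, $\mathcal G(f,\phi_{u,v})=c(u)f_{n,v}$, with the same conjugate-linear dependence on $v$ as \eqref{eq:compact-section}.

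\emph{Computing the scalar.} To find $c(u)$, evaluate at $g=1$ and pair against the highest weight vector, i.e.\ take $v=v_n^+$, so that $f_{n,v}(1)=1$. Write $h=ztk_0$ and $Jh=J z t k_0$. Integrating over $k_0$ pairs $\langle v_n^+,\mathbb S_\eta(Jzt)\rho_E(k_0)u\rangle$ with $\det(k_0)^{d_n}\langle\rho_E(k_0)u,v_E^+\rangle$ from \eqref{eq:U(n-1)-section}; the $\det(k_0)^{d_n}$ cancels against $\mu_n(\det h)^{-1}$. Schur orthogonality on $U(n-1)$ then yields $\|u\|^2/\dim E$ times $\langle v_n^+,\mathbb S_\eta(Jzt)v_E^+\rangle$. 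Since $\mathbb S_\eta(J)v_E^+=v_n^+$ and $z$ is unipotent upper triangular, this matrix coefficient reduces to the torus weight $\prod_i t_i^{\eta_i}$.

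What remains is a Gaussian integral over $z\in N_{n-1}(\mathbb C)$ and a product of one-dimensional Gamma integrals over $t_i$. The exponent for $t_i$ combines $\eta_i=d_i-d_n$, the modular characters and $\mu_i\mu_n^{-1}(t_i)=t_i^{d_i-d_n}$ in absolute value. Each such integral has the form $\int_0^\infty t^{2(1+b_{in})}e^{-2\pi t^2}\,d^\times t$, which is $\Gamma_{\mathbb C}(1+b_{in})$ up to a power of $2$. The $z$-integral contributes further powers of $2$, and the constant $2^{(n-1)(n+2)/2}$ in \eqref{pub:Godement-measure} is designed to absorb all of them.

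\emph{Main obstacle.} The delicate step is this exponent bookkeeping. One must check that $\operatorname{tr}((Jzt)^*(Jzt))$ splits as $\sum_i t_i^2(1+\sum_{j>i}|z_{ij}|^2)$-type terms, which needs care, and then track the powers of $t_i$ and of $2$ exactly. I would first do $n=2$ to fix the constants, then induct on the columns of $z$.

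The last statement of the proposition follows immediately. Take $u=D_{n-1}^{-1}v_{n-1,\epsilon}$, which is nonzero. Then $C_n\|u\|^2/\dim E\neq0$, since $\Gamma_{\mathbb C}$ has no zeros.
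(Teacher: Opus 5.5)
Your outline follows the paper's proof. The induction law comes from $(b')^{-1}J=Jb_0^{-1}$ and the substitution $h=b_0h'$. $U(n)$-equivariance comes from $\phi_{u,v}(k^{-1}X)=\phi_{u,\mathbb S_\eta(k)v}(X)$. Then you evaluate at $g=1$ in coordinates $h=ztk_0$, using Schur orthogonality on $U(n-1)$. Your appeal to multiplicity one of $\tau$ in $I(d)$ to reduce to $v=v_n^+$ is valid but unnecessary. The same computation at $g=1$ for arbitrary $v$ yields $\langle v,v_n^+\rangle=f_{n,v}(1)$, and $K$- and $B_n(\mathbb C)$-equivariance then finish, as in the paper. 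Note also that both sides are linear, not conjugate-linear, in $v$; this slip is harmless.

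The real problem is the step that determines $C_n$. Two of the ingredients you name are wrong, and used literally they do not produce $t_i^{2+2b_{in}}$.

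\emph{The characters.} With the common exponent $\nu_i=m/2$, one has $\mu_i(t_i)=t_i^m$ for $t_i>0$. So $\mu_i\mu_n^{-1}$ is trivial on the positive torus, and your claim $|\mu_i\mu_n^{-1}(t_i)|=t_i^{d_i-d_n}$ is false. The factor $\prod_i t_i^{2b_{in}}$ comes only from $\rho_E(t)v_E^+$. Counting it a second time through $\mu_i\mu_n^{-1}$ would give $\Gamma_{\mathbb C}(1+2b_{in})$.

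\emph{The unipotent integral.} Since $(zt)_{ij}=z_{ij}t_j$, right multiplication by $t$ scales columns. Hence $\|zt\|^2=\sum_{j<n}t_j^2\bigl(1+\sum_{i<j}|z_{ij}|^2\bigr)$, and the $z$-integral equals $2^{-(n-1)(n-2)/2}\prod_{j<n}t_j^{-2(j-1)}e^{-2\pi\sum_j t_j^2}$. Your row splitting would instead produce $t_i^{-2(n-1-i)}$ and the wrong exponent.

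\emph{The correct count.} The factors $\delta_{B_{n-1}(\mathbb C)}(t)^{1/2}$ from $f$, $\delta_{B_{n-1}(\mathbb C)}(t)^{-1}$ from the measure, and $|\det t|_{\mathbb C}^{n/2}$ combine to $\prod_i t_i^{2i}$. Multiplying by $t_i^{2b_{in}}$ and $t_i^{-2(i-1)}$ gives $t_i^{2+2b_{in}}$. For the constants, $2^{(n-1)(n+2)/2}\cdot 2^{-(n-1)(n-2)/2}=4^{n-1}$, and $4\int_0^\infty t^{2+2b}e^{-2\pi t^2}\,dt/t=\Gamma_{\mathbb C}(1+b)$.

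\emph{Convergence.} Your sketch has a related blind spot. The $z$-Gaussians degenerate as $t_j\to0$, and for general $g$ you need either $\|g^{-1}Jh\|\ge c_g\|h\|$ or an explicit reduction to $g=1$ by equivariance. The paper handles both with the substitution $y_{ij}=z_{ij}t_j$. This turns $\prod_i t_i^{2i}\,d^\times t\,dz$ into $\prod_i t_i\,dt_i\prod_{i<j<n}d^2y_{ij}$, integrated against a Gaussian times a polynomial in $(t,y)$.
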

\begin{proof}
For $b'=\left(\begin{smallmatrix}b_0&*\\0&b_n'\end{smallmatrix}\right)$,
one has $(b')^{-1}J=Jb_0^{-1}$. Substitution of $h=b_0h'$ in
\eqref{read:Jacquet-column} gives the character $\prod_i\mu_i(b'_{ii})$
and the modulus
\[
 \delta_{B_{n-1}(\mathbb C)}(b_0)^{1/2}
 |\det b_0|_{\mathbb C}^{1/2}|b_n'|_{\mathbb C}^{-(n-1)/2}
 =\delta_{B_n(\mathbb C)}(b')^{1/2}.
\]
Thus the Godement section has the required induction law.
The identity $\phi_{u,v}(k^{-1}X)=\phi_{u,\mathbb S_\eta(k)v}(X)$
shows that both sides of \eqref{pub:Godement-scalar} are
$U(n)$-equivariant in $\det^{d_n}\otimes V_n$.
It suffices to compare their values at $1$ for every $v$.

Write $h=ztk_0$ as in \eqref{pub:Godement-measure}. By
\eqref{eq:U(n-1)-section},
\begin{equation}\label{eq:rank(n-1)-cancellation}
 f_{n-1,D_{n-1}u}(h)\mu_n(\det h)^{-1}
 =\delta_{B_{n-1}(\mathbb C)}(t)^{1/2}
       \langle\rho_E(k_0)u,v_E^+\rangle,
\end{equation}
and
\begin{equation}\label{eq:rank-(n-1)-modulus-character}
 \delta_{B_{n-1}(\mathbb C)}(t)^{-1/2}
 |\det t|_{\mathbb C}^{n/2}=\prod_{i<n}t_i^{2i}.
\end{equation}
Put $A=\mathbb S_\eta(J)\rho_E(zt)$. Since $z$ fixes $v_E^+$,
$Av_E^+=(\prod_{i<n}t_i^{d_i-d_n})v_n^+$.
Schur orthogonality gives
\[
 \int_{U(n-1)}\langle v,A\rho_E(k_0)u\rangle
 \langle\rho_E(k_0)u,v_E^+\rangle\,dk_0
 =\frac{\|u\|^2}{\dim E}\langle v,Av_E^+\rangle.
\]
The remaining unipotent Gaussian integral is
\[
 \int_{N_{n-1}(\mathbb C)}e^{-2\pi\|zt\|^2}\,dz
 =2^{-(n-1)(n-2)/2}\prod_{i<n}t_i^{-2(i-1)}
       e^{-2\pi\sum_{i<n}t_i^2}.
\]
Combining this with \eqref{pub:Godement-measure}, we have
\[
 \mathcal G(f_{n-1,D_{n-1}u},\phi_{u,v})(1)
 =\frac{\|u\|^2}{\dim E}\langle v,v_n^+\rangle
\prod_{i<n}\left(4\int_0^\infty
         t_i^{2+2b_{in}}e^{-2\pi t_i^2}\frac{dt_i}{t_i}\right)=C_n\frac{\|u\|^2}{\dim E}f_{n,v}(1).
\]
The last equality holds since $f_{n,v}(1)=\langle v,v_n^+\rangle$, and each factor in the parenthesis equals $\Gamma_{\mathbb C}(1+b_{in})$.
This holds for every $v$. Using $K$-equivariance, we may replace $v$ by
$\det(k)^{d_n}\mathbb S_\eta(k)v$. So the same equality holds for all $k\in K$.
Finally, the $B_n(\mathbb C)$-equivariance of the sections on both hand sides of \eqref{pub:Godement-scalar}
extends the equality to all $g\in G_n(\mathbb C)$ by Iwasawa decomposition.

To justify absolute convergence before integration over $U(n-1)$,
set $y_{ij}=z_{ij}t_j$. Then
\[\left(\prod_{i<n}t_i^{2i}\right)d^\times t\,dz
=\prod_{i<n}t_i\,dt_i\prod_{i<j<n}d^2y_{ij}.\]
Since $g^{-1}$ is invertible and $\|Jh\|=\|h\|$, there exists a constant $c_g>0$, depending only on $g$, such that $\|g^{-1}Jh\|\ge c_g\|h\|$ for every $h$. Thus, uniformly in $k_0\in U(n-1)$, the remaining integrand
is bounded by a polynomial in $t_i,\Re y_{ij},\Im y_{ij}$
times
$\exp\left[-2\pi c_g^2\left(\sum_{i<n}t_i^2+\sum_{i<j<n}|y_{ij}|^2\right)\right]$.
This bound is integrable against the chosen measure,
both as any $t_i\to0$ and at infinity.
\end{proof}

\subsubsection{Unfolding of the Jacquet integral}
\label{upper:unfolding}
We first work with general inducing characters
$\mu_i=\chi_{d_i}|\cdot|_{\mathbb C}^{\nu_i}$.
For $1\le j\le n$, write $W_{j,v;\nu}$ for the Jacquet
integral \eqref{read:Jacquet} of the section $f_{j,v;\nu}$
induced from $\mu_1,\ldots,\mu_j$ and determined by
\begin{equation}
 f_{j,v;\nu}(k)
 =\det(k)^{d_j}
 \left\langle\mathbb S_{\eta^{[j]}}(k)v,v_j^+\right\rangle,
 \qquad k\in U(j).
\end{equation}
Here $v_j^+$ is our chosen unit highest vector. Put
$\nu'=(\nu_1,\ldots,\nu_{n-1})$ and
\begin{equation}\label{audit:deformed-scalar}
 C_n(\nu)=\prod_{i<n}
 \Gamma_{\mathbb C}(1+b_{in}+\nu_i-\nu_n).
\end{equation}
The Jacquet integrals converge absolutely in the chamber
$\Re\nu_1>\cdots>\Re\nu_n$ and extend holomorphically in
$\nu$ by \cite[Section~7.2]{jacquet2009}, after applying
the map $g\mapsto g^{-t}$ described above. The following proposition proves a recurrence relation of Whittaker functions for general principal series.

\begin{prop}\label{upper:unfolding-proposition}
Assume $\Re\nu_1>\cdots>\Re\nu_n$. Let $v\in V_n$ and
$a=\operatorname{diag}(a_1,\ldots,a_n)$ with $a_i>0$.
For $U\in N_n(\mathbb C)$ and
$t=\operatorname{diag}(t_1,\ldots,t_{n-1})$ with $t_i>0$, put
\begin{equation}\label{matrix U, A}
 A=a^{-1}UJ_0t,\qquad
 J_0=\binom{0}{I_{n-1}}
     =\dot w_nJ\dot w_{n-1}^{-1}.
\end{equation}
Then
\begin{equation}\label{audit:before-Gaussian}
\begin{split}
 W_{n,v;\nu}(a)
 ={}&\frac{(-1)^{(n-1)\epsilon}2^{(n-1)(n+2)/2}}{C_n(\nu)}
 (\det a)^{2\nu_n-n+1}
 \int_{(0,\infty)^{n-1}}(\det t)^{n-2\nu_n}
 \delta_{B_{n-1}(\mathbb C)}(t)^{-1}\\
 &\quad\cdot\left[\int_{N_n(\mathbb C)}e^{-2\pi\|A\|^2}
 W_{n-1,D_{n-1}\mathbb S_\eta(A^*)v;\nu'}(t)
 \psi(U)\,dU\right]d^\times t.
\end{split}
\end{equation}
The integral on the right is absolutely convergent.
\end{prop}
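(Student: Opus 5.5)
The plan is to insert the Godement section of Proposition~\ref{pub:Godement-identity}, in its $\nu$-deformed form, into the Jacquet integral defining $W_{n,v;\nu}$, and then to unfold. First I check that the Gaussian computation in the proof of Proposition~\ref{pub:Godement-identity} goes through for general $\nu$. It does, with $C_n$ replaced by $C_n(\nu)$: the $t_i$-integrals become $4\int_0^\infty t_i^{2+2b_{in}+2\nu_i-2\nu_n}e^{-2\pi t_i^2}\,dt_i/t_i=\Gamma_{\mathbb C}(1+b_{in}+\nu_i-\nu_n)$. Choosing $u$ as a Schur-orthogonality-normalized vector, for instance with $\|u\|^2=\dim E$ after averaging, we can write $f_{n,v;\nu}=C_n(\nu)^{-1}\mathcal G(f_{n-1,D_{n-1}u;\nu'},\phi_{u,v})$. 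Better still, we average over an orthonormal basis of $E$, which replaces $\langle v,\mathbb S_\eta(X)u\rangle\,\overline{\langle\cdot,u\rangle}$ by the resolution of the identity. Then
\[
 \sum_u\langle v,\mathbb S_\eta(X)u\rangle f_{n-1,D_{n-1}u}
 =f_{n-1,D_{n-1}\mathbb S_\eta(X)^*v},
\]
and this is exactly the shape $D_{n-1}\mathbb S_\eta(A^*)v$ appearing in \eqref{audit:before-Gaussian}.

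Next I substitute this into $W_{n,v;\nu}(a)=\int_{N_n(\mathbb C)} f_{n,v;\nu}(\dot w_nUa)\psi^{-1}(U)\,dU$, where $g=\dot w_nUa$, so that $g^{-1}Jh=a^{-1}U^{-1}\dot w_n^{-1}Jh$. I write $\dot w_n^{-1}J=J_0\dot w_{n-1}^{-1}$ (up to the sign bookkeeping) and substitute $h\mapsto\dot w_{n-1}h$. I then decompose $h=u't$ with $u'\in N_{n-1}(\mathbb C)$ and $t$ taken in the torus with the $U(n-1)$-part absorbed; alternatively I use the Bruhat-type integration $G_{n-1}=\dot w_{n-1}N_{n-1}A_{n-1}N_{n-1}$. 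The inner $N_{n-1}$ integration against $\psi^{-1}$ produces the rank-$(n-1)$ Jacquet integral $W_{n-1,\cdot;\nu'}(t)$. To see this I embed $N_{n-1}$ into $N_n$ via $J_0$: the matrix $UJ_0u'$ is absorbed by the change of variables $U\mapsto U\,\mathrm{diag}(1,u')^{-1}$ (reindexed through $J_0$), and the characters $\psi$ split compatibly. The determinant factors $\mu_n(\det g)|\det g|^{-(n-1)/2}$ and $\mu_n(\det h)^{-1}|\det h|^{n/2}$ give $(\det a)^{2\nu_n-n+1}$ and $(\det t)^{n-2\nu_n}$, where $|\det a|_{\mathbb C}=(\det a)^2$. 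The sign $(-1)^{(n-1)\epsilon}$ comes from $\det(\dot w_n)$ and $\det(\dot w_{n-1})$ raised to $d_n$, together with the parity of $d_n$. The constant $2^{(n-1)(n+2)/2}$ is the measure normalization \eqref{pub:Godement-measure}, and $\delta_{B_{n-1}(\mathbb C)}(t)^{-1}$ is the Haar density.

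Absolute convergence, which justifies Fubini, is handled as in the last paragraph of the proof of Proposition~\ref{pub:Godement-identity}. The Gaussian $e^{-2\pi\|A\|^2}$ controls the $U$ and $t$ variables polynomially, and in the chamber $\Re\nu_1>\cdots>\Re\nu_n$ the rank-$(n-1)$ Jacquet integrals converge absolutely and have at most polynomial growth in $t$ uniformly on compacta of $\nu$. Since the exponent $n-2\Re\nu_n$ is large near $t\to0$ after we combine it with the Gaussian decay, integrability follows.

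I expect the main obstacle to be the unfolding step: reorganizing $N_n(\mathbb C)\times G_{n-1}(\mathbb C)$ so that the character $\psi^{-1}(U)$ on $N_n$ becomes $\psi(U)$ times the rank-$(n-1)$ character. The sign change $\psi^{-1}\to\psi$ suggests an inversion $U\mapsto U^{-1}$ when passing to $a^{-1}UJ_0$. We must also track exactly how the super-diagonal entries linking rows $n-1$ and $n$ are split between the outer and inner integrals. I would first verify this at $n=2$ by direct computation, then perform the general change of variables explicitly via the factorization $\dot w_n=\dot w_{n-1}$-block times a cyclic permutation.
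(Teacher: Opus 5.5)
Your unfolding algebra is correct and is essentially the paper's. You sum the deformed Godement identity over an orthonormal basis of $E$, so that the section carries the vector $D_{n-1}\mathbb S_\eta((g^{-1}Jh)^*)v$. Then you put $h=\dot w_{n-1}u'tk_0$, $U=\operatorname{diag}(1,u')U'$, and $U'\mapsto U'^{-1}$. This makes the vector independent of $u'$, and its $k_0$-dependence cancels against $\mu_n(\det k_0)^{-1}$ by \eqref{pub:determinant-line}. The paper instead parametrizes by $q=\left(\begin{smallmatrix}1&z\\0&\xi\end{smallmatrix}\right)$, $h=\dot w_{n-1}\xi h'$, $U=\left(\begin{smallmatrix}1&-zx\\0&x\end{smallmatrix}\right)$, and ends with Schur orthogonality over $U(n-1)$. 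The difference is only bookkeeping.

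\textbf{The gap is the absolute convergence.} The statement asserts it, and every interchange above needs it. The $u'$-integral can only be performed after the change of variables in $U$, so you need joint integrability over $N_n(\mathbb C)\times G_{n-1}(\mathbb C)$ with $|f_{n-1}|$ inside, not a bound on $|W_{n-1}|$. Your justification fails on three counts.
\begin{enumerate}[(i)]
\item The last paragraph of the proof of Proposition~\ref{pub:Godement-identity} is for fixed $g$. Its constant $c_g$ tends to $0$ as $g=\dot w_nUa$ runs over $N_n(\mathbb C)$, so it says nothing about the joint integral.
\item In absolute value there is no oscillation from $\psi(U)$, and $e^{-2\pi\|A\|^2}$ does not control small $t$. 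Since $\|A\|^2=\sum_r(t_r/a_{r+1})^2+\sum_{i<j}(t_{j-1}/a_i)^2|U_{ij}|^2$, integrating over $U$ produces $\prod_{i<j}(a_i/t_{j-1})^2\asymp\prod_r t_r^{-2r}$, which blows up as $t\to0$.
\item $n-2\Re\nu_n$ need not be large in the chamber: shifting every $\nu_i$ by a large real constant preserves the chamber. No polynomial-growth bound on $W_{n-1}$, such as $\prod(t_i+t_i^{-1})^M$, can compensate.
\end{enumerate}

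What is needed is the exact small-$t$ power of the absolute inner integral. Compare $|f_{n-1,w;\nu'}|$ with the spherical section of exponent $\Re\nu'$ and substitute $u'\mapsto tu't^{-1}$. This gives
\[
 \int_{N_{n-1}(\mathbb C)}|f_{n-1,w;\nu'}(\dot w_{n-1}u'tk_0)|\,du'
 \le c(\Re\nu')\,\|w\|\,\delta_{B_{n-1}(\mathbb R)}(t)\prod_{r<n}t_r^{2\Re\nu_{n-r}},
\]
which is finite for $\Re\nu_1>\cdots>\Re\nu_{n-1}$. Bound $\|\mathbb S_\eta(A^*)v\|$ polynomially in $t_r/a_{r+1}$ and in $y_{ij}=(t_{j-1}/a_i)U_{ij}$. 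Combine this with $(\det t)^{n-2\Re\nu_n}\delta_{B_{n-1}(\mathbb C)}(t)^{-1}$ and the Jacobian $\prod_r t_r^{-2r}$. The net power of $t_r$ near $0$ is then $t_r^{2\Re(\nu_{n-r}-\nu_n)}$, which is integrable against $dt_r/t_r$ precisely because $\Re\nu_{n-r}>\Re\nu_n$. At infinity, $e^{-2\pi\sum_r(t_r/a_{r+1})^2}$ dominates.

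This is where the last chamber inequality $\Re\nu_{n-1}>\Re\nu_n$ enters, and your sketch never uses it. Alternatively, cite the absolute convergence of the Jacquet integral of the Godement section from \cite[Section~7.2]{jacquet2009}, as the paper does.
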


\begin{proof}
Fix $u\in E\setminus\{0\}$ and let
$\mathcal G=\mathcal G(f_{n-1,D_{n-1}u;\nu'},\phi_{u,v})$.
The calculation in Proposition~\ref{pub:Godement-identity},
with the general inducing exponents, gives
\begin{equation}\label{eq:Godement-identity}
    \mathcal G=C_n(\nu)\frac{\|u\|^2}{\dim E}f_{n,v;\nu}.
\end{equation}
The double integral obtained by substituting the Godement section
into the Jacquet integral is absolutely convergent in this chamber
by the argument in \cite[Section~7.2]{jacquet2009}, after the
change of conventions described above. Hence Fubini's theorem
justifies the following interchanges.

Write $q\in N_n(\mathbb C)$ as
\[
 q=\begin{pmatrix}1&z\\0&\xi\end{pmatrix},\qquad
 z\in\mathbb C^{1\times(n-1)},\quad
 \xi\in N_{n-1}(\mathbb C).
\]
Then $dq=dz\,d\xi$ and $\psi(q)=\psi(z_1)\psi(\xi)$.
Insert \eqref{read:Jacquet-column} into the Jacquet integral
for $W_{\mathcal G}(a)$ and substitute
$h=\dot w_{n-1}\xi h'$. We obtain
\[
 q^{-1}\dot w_nJh=\binom{-z}{I_{n-1}}h',\qquad
 \int_{N_{n-1}(\mathbb C)}
 f_{n-1,D_{n-1}u;\nu'}(\dot w_{n-1}\xi h')
 \psi^{-1}(\xi)\,d\xi
 =W_{n-1,D_{n-1}u;\nu'}(h').
\]
Since $\det\dot w_j=(-1)^{j(j-1)/2}$ and
$\mu_n(-1)=(-1)^\epsilon$, the determinant factors contribute
\[
 \mu_n(\det\dot w_n)\mu_n(\det\dot w_{n-1})^{-1}
 =(-1)^{(n-1)\epsilon}.
\]
Renaming $h'$ as $h$ yields the intermediate unfolding identity
\begin{equation}\label{read:unfolded-row}
\begin{split}
 W_{\mathcal G}(a)
 ={}&(-1)^{(n-1)\epsilon}(\det a)^{2\nu_n-n+1}
 \int_{G_{n-1}(\mathbb C)}\int_{\mathbb C^{n-1}}
 \phi_{u,v}\left(a^{-1}\binom{-z}{I_{n-1}}h\right)
 \psi^{-1}(z_1)\,dz\\
 &\hspace{12mm}\cdot W_{n-1,D_{n-1}u;\nu'}(h)
 \mu_n(\det h)^{-1}|\det h|_{\mathbb C}^{n/2}\,dh.
\end{split}
\end{equation}

To obtain \eqref{audit:before-Gaussian}, write $h=xtk_0$,
where $x\in N_{n-1}(\mathbb C)$ and $k_0\in U(n-1)$, and set
\[
 U=\begin{pmatrix}1&-zx\\0&x\end{pmatrix}.
\]
The map $(z,x)\mapsto U$ preserves Lebesgue measure and satisfies
$\psi(x)\psi^{-1}(z_1)=\psi(U)$. Moreover,
$a^{-1}\binom{-z}{I_{n-1}}h=Ak_0$, with $A$ as in
\eqref{matrix U, A}. Thus the Schwartz kernel becomes
\[
 \phi_{u,v}(Ak_0)
 =e^{-2\pi\|A\|^2}
 \left\langle\mathbb S_\eta(A^*)v,\rho_E(k_0)u\right\rangle.
\]
The relation \eqref{pub:determinant-line} and Whittaker
equivariance give
\[
 W_{n-1,D_{n-1}u;\nu'}(xtk_0)\mu_n(\det(xtk_0))^{-1}
 =\psi(x)(\det t)^{-2\nu_n}
 W_{n-1,D_{n-1}\rho_E(k_0)u;\nu'}(t).
\]
For $w\in E$, Schur orthogonality gives
\[
 \int_{U(n-1)}\langle w,\rho_E(k_0)u\rangle
 \rho_E(k_0)u\,dk_0
 =\frac{\|u\|^2}{\dim E}w.
\]
Applying the linear map $w\mapsto W_{n-1,D_{n-1}w;\nu'}(t)$
to this identity yields
\[
 \int_{U(n-1)}\langle w,\rho_E(k_0)u\rangle
 W_{n-1,D_{n-1}\rho_E(k_0)u;\nu'}(t)\,dk_0
 =\frac{\|u\|^2}{\dim E}W_{n-1,D_{n-1}w;\nu'}(t).
\]
Substitute these identities into \eqref{read:unfolded-row},
using $w=\mathbb S_\eta(A^*)v$ and the Haar measure
\eqref{pub:Godement-measure}. The measure contributes
$2^{(n-1)(n+2)/2}\delta_{B_{n-1}(\mathbb C)}(t)^{-1}$,
while $|\det h|_{\mathbb C}^{n/2}=(\det t)^n$.
Finally, divide by $C_n(\nu)\|u\|^2/\dim E$ using \eqref{eq:Godement-identity}. This proves \eqref{audit:before-Gaussian}.
\end{proof}

\subsubsection{A recurrence relation}Retain the notation $V_{j}=\mathbb S_{\eta^{[j]}}(\mathbb C^{j})$. We prove a recurrence relation for the function \eqref{read:variable-character} associated to the section $f_v$ with $v\in V_n$. We begin with the following lemma.
\begin{lem}\label{read:Gaussian-lemma}
For $p\in\mathbb C[z]$, $c>0$, and $\zeta\in\mathbb R$,
\begin{equation}\label{read:Gaussian-Fourier}
 \int_{\mathbb C}p(\bar z)
 e^{-2\pi c^2|z|^2+2\pi i\zeta\Im z}\,d^2z
 =
 \frac{e^{-\pi\zeta^2/(2c^2)}}{2c^2}
 p\left(\frac{\zeta}{2c^2}\right).
\end{equation}
\end{lem}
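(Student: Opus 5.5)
The plan is to reduce to the basic Gaussian Fourier transform by linearity, after completing the square. Write $z=x+\mathrm iy$ with $d^2z=dx\,dy$. Since both sides are linear in $p$, it suffices to treat monomials, or better, to handle all $p$ at once with a generating function. The phase $2\pi\mathrm i\zeta\Im z=2\pi\mathrm i\zeta y$ involves only $y$. The point $z_0=\mathrm i\zeta/(2c^2)$ is the critical point of the exponent
\[
 -2\pi c^2|z|^2+2\pi\mathrm i\zeta y .
\]
Indeed, completing the square in $y$ gives
\[
 -2\pi c^2\bigl(x^2+(y-\mathrm i\zeta/(2c^2))^2\bigr)-\frac{\pi\zeta^2}{2c^2}.
\]
This produces the prefactor $e^{-\pi\zeta^2/(2c^2)}$ at once.

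Next shift the $y$-contour by $\mathrm i\zeta/(2c^2)$, writing $y=y'+\mathrm i\zeta/(2c^2)$. The integrand is entire in $y$ and Gaussian-decaying on horizontal strips, so Cauchy's theorem justifies the shift. Under this substitution $\bar z=x-\mathrm iy$ becomes $x-\mathrm iy'+\zeta/(2c^2)$, i.e. $\bar w+\zeta/(2c^2)$ with $w=x+\mathrm iy'$ real-parametrized. The integral becomes
\[
 e^{-\pi\zeta^2/(2c^2)}\int_{\mathbb C}p\bigl(\bar w+\tfrac{\zeta}{2c^2}\bigr)e^{-2\pi c^2|w|^2}\,d^2w .
\]

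Expand $p(\bar w+\zeta/(2c^2))$ in powers of $\bar w$. Every term $\bar w^k$ with $k\ge1$ integrates to zero against the rotation-invariant Gaussian, by the substitution $w\mapsto e^{\mathrm i\theta}w$ or polar coordinates. Only the constant term $p(\zeta/(2c^2))$ survives. It is multiplied by
\[
 \int_{\mathbb C}e^{-2\pi c^2|w|^2}d^2w=\frac{\pi}{2\pi c^2}=\frac1{2c^2},
\]
which is exactly the right-hand side.

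The only point needing care is the contour shift with a polynomial factor. On horizontal lines $\Im y$ is bounded, so the integrand is a polynomial times $e^{-2\pi c^2(\Re y)^2}$ up to bounded factors. The vertical edges of the rectangle therefore vanish as $|\Re y|\to\infty$, and Fubini applies. Alternatively, I would avoid contours entirely: compute the case $p(z)=e^{\alpha z}$ by the standard real Gaussian transform, which gives $e^{\alpha\zeta/(2c^2)}/(2c^2)$ times the exponential prefactor, and then differentiate in $\alpha$ at $0$ to obtain all monomials.
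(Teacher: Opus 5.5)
Your proof is correct. Your main route differs from the paper's; the alternative you sketch in your last sentence is essentially what the paper does.

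The paper first evaluates the exponential generating function
$\int_{\mathbb C}e^{b\bar z-2\pi c^2|z|^2+2\pi\mathrm i\zeta\Im z}\,d^2z=\frac{1}{2c^2}\exp\bigl(-\frac{\pi\zeta^2}{2c^2}+\frac{b\zeta}{2c^2}\bigr)$
by separate real Gaussian integrals in $x$ and $y$. It then differentiates $r$ times in $b$ at $b=0$, justified by the dominating function $|z|^re^{-2\pi c^2|z|^2+B|z|}$ for $|b|\le B$. Linearity in $p$ finishes the proof.

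Your main route completes the square and shifts the $y$-line by $\mathrm i\zeta/(2c^2)$, which turns $\bar z$ into $\bar w+\zeta/(2c^2)$. Rotation invariance of the Gaussian then kills every $\bar w^k$ with $k\ge1$.

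What your route buys:
\begin{itemize}
\item It avoids differentiation under the integral sign entirely.
\item It explains why $p$ is evaluated at $\zeta/(2c^2)$. Antiholomorphic polynomials satisfy a mean-value property for radial weights, and the shift moves the center of averaging to that point.
\end{itemize}

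The cost is the contour-shift justification. You supply it correctly: on the strip, $|e^{-2\pi c^2y^2+2\pi\mathrm i\zeta y}|$ is Gaussian in $\Re y$ up to bounded factors, so the vertical sides of the rectangle vanish, and Fubini separates the $x$ and $y$ integrations.

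The paper's route hides the same analytic input inside the closed-form Gaussian integral with a complex linear term. In exchange it needs the domination estimate for the $b$-derivatives.
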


\begin{proof}
Writing $z=x+iy$, one computes the real Gaussian integrals and obtains
\[
\int_{\mathbb C}
 e^{b\bar z-2\pi c^2|z|^2+2\pi i\zeta\Im z}\,d^2z=
 \frac1{2c^2}
 \exp\left(-\frac{\pi\zeta^2}{2c^2}
                  +\frac{b\zeta}{2c^2}\right).
\]
For $|b|\le B$, \[|\left(\frac{\partial}{\partial b}\right)^r(e^{b\bar z-2\pi c^2|z|^2+2\pi i\zeta\Im z})|\le
 |z|^r e^{-2\pi c^2|z|^2+B|z|}.\] The upper bound is integrable on $\mathbb C$.
This validates differentiation under the integral.
Taking the $r$th derivative at $b=0$ proves the formula
for monomials $p(z)=z^r$. It extends linearly to every polynomial.
\end{proof}
For the section $f_{d,v}$ associated to $v$, write
$F_{d,v}:=F_{f_{d,v}}$ as in \eqref{read:reduced}.
In rank one, $F_{(d_1),v}(a_1)=v$ for $a_1>0$.

\begin{prop}\label{read:Jacquet-recursion}
Let $v\in V_n$ and
$a=\operatorname{diag}(a_1,\ldots,a_n)$ with $a_i>0$.
For $t=\operatorname{diag}(t_1,\ldots,t_{n-1})$, define the
$(n-1)\times n$ matrix $\mathcal B(t,a)$ by
\begin{equation}\label{read:bidiagonal}
 \mathcal B(t,a)_{ii}=\frac{a_i}{t_i},\qquad
 \mathcal B(t,a)_{i,i+1}=\frac{t_i}{a_{i+1}},
\end{equation}
with all other entries zero, and put
$v_a(t)=D_{n-1}\mathbb S_{\eta^{[n]}}(\mathcal B(t,a))v$.
Then
\begin{equation}\label{read:recursion}
 F_{d,v}(a)
 =(-1)^{(n-1)\epsilon}\kappa_n
 \int_{(0,\infty)^{n-1}}
 F_{(d_1,\ldots,d_{n-1}),v_a(t)}(t)
 e^{-\pi\|\mathcal B(t,a)\|^2}\,d^\times t,\quad \text{where}\quad
 \kappa_n
 =\frac{2^{n-1}\pi^{n-1+\sum_{i<n}b_{in}}}
 {\prod_{i<n}\Gamma(1+b_{in})}.
\end{equation}
The integral is absolutely convergent for fixed $a$.
\end{prop}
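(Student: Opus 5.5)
The plan is to start from Proposition~\ref{upper:unfolding-proposition} in the convergence chamber $\Re\nu_1>\cdots>\Re\nu_n$. In \eqref{audit:before-Gaussian} we carry out the $N_n(\mathbb C)$-integral over $U$ by iterated use of Lemma~\ref{read:Gaussian-lemma}, and only afterwards specialize $\nu_i=m/2$ by holomorphic continuation.

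First we rewrite the rank-$(n-1)$ Whittaker function. By \eqref{read:reduced} and its rank-$(n-1)$ analogue, $W_{n-1,w;\nu'}(t)=\delta_{B_{n-1}(\mathbb R)}(t)\prod_i t_{n-i}^{2\nu_i}F_{n-1,w;\nu'}(t)$. Here $F$ is given by \eqref{read:variable-character} with $w$ entering linearly, so $F_{n-1,D_{n-1}\mathbb S_\eta(A^*)v}(t)$ is a polynomial in the entries of $A^*=t J_0^t U^* a^{-1}$. This polynomial is antiholomorphic in the entries of $U$, because $\mathbb S_\eta$ is polynomial and $A^*$ involves $\bar U$.

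Next we write $\|A\|^2=\sum_{i,j}|U_{i,j+1}|^2t_j^2/a_i^2$, where the row index $i\le j+1$ and we set $U_{ii}=1$. Because of the diagonal entries, the column of $A$ indexed by $j$ is $(a_i^{-1}U_{i,j+1}t_j)_{i\le j+1}$. The variables $U_{ij}$ with $i<j$ then enter through independent complex Gaussians $e^{-2\pi t_{j-1}^2|U_{ij}|^2/a_i^2}$. The only oscillation is the character $\psi(U)=e^{2\pi i\sum\Im U_{i,i+1}}$, which involves only the superdiagonal entries.

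Lemma~\ref{read:Gaussian-lemma} handles each variable. For an entry $U_{ij}$ with $j>i+1$ we take $\zeta=0$, which replaces $\bar U_{ij}$ by $0$ and produces the factor $a_i^2/(2t_{j-1}^2)$. For a superdiagonal entry $U_{i,i+1}$ we take $\zeta=1$ and $c=t_i/a_i$, which substitutes $\bar U_{i,i+1}=a_i^2/(2t_i^2)$ and produces the factor $\frac{a_i^2}{2t_i^2}e^{-\pi a_i^2/(2t_i^2)}$. After these substitutions $A^*$ becomes an explicit bidiagonal matrix. Rescaling $t_i\mapsto t_i/\sqrt2$ should turn it into a scalar multiple of $\mathcal B(t,a)$, with the scalar absorbed by homogeneity of $\mathbb S_{\eta}$ and the determinant twist $D_{n-1}$. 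The remaining Gaussian $e^{-2\pi\sum_i t_i^2/a_{i+1}^2}$ together with the Fourier factors $e^{-\pi a_i^2/(2t_i^2)}$ should combine to $e^{-\pi\|\mathcal B(t,a)\|^2}$. One must check that the entry $A_{j+1,j}=t_j/a_{j+1}$ coming from $U_{j+1,j+1}=1$ accounts for the terms $t_i/a_{i+1}$, and that the substituted superdiagonal entries account for $a_i/t_i$. If the orientation of $\mathcal B$ comes out reversed, we would pass to $\mathcal B^{*}$ using $\mathbb S(A^*)=\mathbb S(A)^*$ and reality of the entries.

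Finally we collect the scalars. These are the powers $a_i^{2}$ and $t^{-2}$ from the Gaussian lemmas, the factor $(\det a)^{2\nu_n-n+1}$, the factor $(\det t)^{n-2\nu_n}\delta_{B_{n-1}(\mathbb C)}(t)^{-1}$, the relation between $W$ and $F$ in both ranks, the factor $2^{(n-1)(n+2)/2}$, and the rescaling of $t$. We specialize to $\nu_i=m/2$, so that $C_n(\nu)=\prod\Gamma_{\mathbb C}(1+b_{in})$, and check that all powers of $a$ and $t$ cancel against the normalization of $F_{d,v}(a)$. The powers of $\pi$ and $2$ should then reduce to $\kappa_n$, using $\Gamma_{\mathbb C}(1+b)=2(2\pi)^{-1-b}\Gamma(1+b)$.

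Absolute convergence for fixed $a$ follows because the integrand is a polynomial in $a_i/t_i$ and $t_i/a_{i+1}$ times $e^{-\pi\|\mathcal B\|^2}$. The factor $F_{n-1}$ is bounded by a polynomial by the same recursion, applied inductively. The Gaussian decays both as $t_i\to0$ and as $t_i\to\infty$.

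The main obstacle is bookkeeping the exponents: verifying that every power of $t_i$, $a_i$, $2$ and $\pi$ cancels to exactly $\kappa_n$, and that the substituted matrix is exactly $\mathcal B(t,a)$. A second point requiring care is justifying the continuation to $\nu_i=m/2$ inside the $t$-integral. For this we would use locally uniform Gaussian domination in $\nu$, together with polynomial bounds on $F_{n-1}$ obtained by induction on $n$.
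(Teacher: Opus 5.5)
Your proposal is correct and follows essentially the same route as the paper. You start from \eqref{audit:before-Gaussian} in the chamber, evaluate the $U$-integral with Lemma~\ref{read:Gaussian-lemma} to obtain the bidiagonal matrix with entries $a_i/(2t_i)$ and $t_i/a_{i+1}$, rescale $t=q/\sqrt2$, and continue in $\nu$. The only variation is that you get the locally uniform polynomial bounds on the lower-rank Whittaker function by induction on rank from the recursion itself, whereas the paper uses holomorphy in the compact Fr\'echet model and the uniform boundedness principle. Your route works provided the inductive hypothesis is the recursion for arbitrary $\nu'$, as in \eqref{upper:deformed-recurrence}, and not only for equal exponents.
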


\begin{remark}
The Godement section gives the recurrence of Whittaker functions
\eqref{audit:before-Gaussian} of
Proposition~\ref{upper:unfolding-proposition} for arbitrary inducing exponents, under the weight assumptions above.
The recurrence relation \eqref{read:recursion} and its constant
$\kappa_n$ use the common exponent $\nu_i=m/2$ of the family
\eqref{CohPS}. This allows the determinant factors to cancel under
the normalization \eqref{read:reduced}, and specializes
$\kappa_n(\nu)$ to $\kappa_n$. Under the same weight and parity assumptions, we only need a common
inducing exponent $\nu_i=\nu_0\in\mathbb C$, rather than
cohomologicality of $\pi$, to obtain \eqref{read:recursion} after
normalizing by $F=\delta_{B(\mathbb R)}^{-1}(\det)^{-2\nu_0}W$
in each rank.

However, the recurrence relation \eqref{read:recursion}, together
with the specific choices of polynomial realization of test vectors, is crucial to the estimate in
Proposition~\ref{read:analytic-input}(2).
Its iteration yields a Gaussian integral with a homogeneous
polynomial of degree $2\Omega$, where $\Omega=\sum_{i<j}b_{ij}$.
Integration in the common scaling variable then produces
$\Gamma(Ns+\Omega)$, whose shift matches the total shift in the
gamma factors of $T_{n,\epsilon}(s)$. This cancellation gives
precisely the exponent $\rho=(n^2+n-4)/4$.
For arbitrary $\nu_i$,
\eqref{audit:before-Gaussian} and its consequence
\eqref{upper:deformed-recurrence} retain additional determinant
powers and do not directly yield this homogeneous comparison.
Thus the general identity alone does not establish the required
bound by our argument; the specific structure of
\eqref{read:recursion} is an essential analytic input.
\end{remark}

\begin{proof}
Put $\nu'=(\nu_1,\ldots,\nu_{n-1})$ and $N=\binom n2=\frac{n(n-1)}{2}$.
First assume $\Re\nu_1>\cdots>\Re\nu_n$, so that
\eqref{audit:before-Gaussian} is absolutely convergent. Recall that in \eqref{matrix U, A}, $A=a^{-1}UJ_0t$. Its entries give
\[  (A^*)_{ri}=\frac{t_r}{a_i}\bar U_{i,r+1},\qquad  \|A\|^2=  \sum_{i<n}\left(\frac{t_i}{a_{i+1}}\right)^2  +\sum_{i<j}\left(\frac{t_{j-1}}{a_i}\right)^2|U_{ij}|^2. \]
The Whittaker function in the integrand is a polynomial in the
entries $\bar U_{ij}$, by polynomiality of the Schur map and linearity
in its vector. Apply Lemma~\ref{read:Gaussian-lemma} successively with
$c=t_{j-1}/a_i$ and $\zeta=1$ for $j=i+1$, $\zeta=0$ otherwise.
Each integration contributes $a_i^2/(2t_{j-1}^2)$ and substitutes
\[
 \bar U_{ij}\longmapsto
 \begin{cases}a_i^2/(2t_i^2),&j=i+1,\\0,&j>i+1.\end{cases}
\]
Thus $A^*$ becomes the bidiagonal matrix $\mathcal C(t,a)$ with
$\mathcal C_{ii}=a_i/(2t_i)$ and
$\mathcal C_{i,i+1}=t_i/a_{i+1}$. The Fourier Gaussian factors combine
with the original Gaussian to give $e^{-2\pi\|\mathcal C(t,a)\|^2}$.
Consequently,
\begin{equation}\label{read:Gaussian-evaluation}
\begin{aligned}
 &\int_{N_n(\mathbb C)}e^{-2\pi\|A\|^2}
 W_{n-1,D_{n-1}\mathbb S_{\eta^{[n]}}(A^*)v;\nu'}(t)\psi(U)\,dU\\
 &=2^{-N}\prod_{i<j}\left(\frac{a_i}{t_{j-1}}\right)^2
 e^{-2\pi\|\mathcal C(t,a)\|^2}
 W_{n-1,D_{n-1}\mathbb S_{\eta^{[n]}}(\mathcal C(t,a))v;\nu'}(t).
\end{aligned}
\end{equation}
Since $\prod_{i<j}(a_i/t_{j-1})^2
=\prod_i a_i^{2(n-i)}\prod_{r<n}t_r^{-2r}$, the modulus factors satisfy
\begin{equation}\label{read:moduli}
 (\det a)^{-(n-1)}\prod_{i<j}\left(\frac{a_i}{t_{j-1}}\right)^2
 (\det t)^n\delta_{B_{n-1}(\mathbb C)}(t)^{-1}
 =\delta_{B_n(\mathbb R)}(a)\delta_{B_{n-1}(\mathbb R)}(t)^{-1}.
\end{equation}
Substitute \eqref{read:Gaussian-evaluation} into
\eqref{audit:before-Gaussian} and then put $t=q/\sqrt2$.
The multiplicative measure and real modulus are unchanged, while
\[
 \mathcal C(q/\sqrt2,a)=2^{-1/2}\mathcal B(q,a),\qquad
 e^{-2\pi\|\mathcal C(q/\sqrt2,a)\|^2}=e^{-\pi\|\mathcal B(q,a)\|^2}.
\]
Schur homogeneity, the lower-rank central character, and the determinant
power contribute, respectively,
$2^{-|\eta^{[n]}|/2}$, $2^{-\sum_{i<n}\nu_i}$, and
$2^{(n-1)\nu_n}$. Since $|\eta^{[n]}|/2=\sum_{i<n}b_{in}$,
renaming $q$ as $t$ gives
\begin{equation}\label{upper:deformed-recurrence}
\begin{aligned}
 W_{n,v;\nu}(a)
 ={}&(-1)^{(n-1)\epsilon}\kappa_n(\nu)
 \delta_{B_n(\mathbb R)}(a)(\det a)^{2\nu_n}\\
 &\quad\cdot\int_{(0,\infty)^{n-1}}
 \frac{W_{n-1,v_a(t);\nu'}(t)}
 {\delta_{B_{n-1}(\mathbb R)}(t)(\det t)^{2\nu_n}}
 e^{-\pi\|\mathcal B(t,a)\|^2}\,d^\times t,\\
 \kappa_n(\nu)
 ={}&\frac{2^{(n-1)(n+2)/2-N-\sum_{i<n}(b_{in}+\nu_i-\nu_n)}}{C_n(\nu)}
 =\frac{2^{n-1}\pi^{n-1+\sum_{i<n}(b_{in}+\nu_i-\nu_n)}}
 {\prod_{i<n}\Gamma(1+b_{in}+\nu_i-\nu_n)}.
\end{aligned}
\end{equation}

We now extend this identity to all $\nu$.
The Jacquet integrals depend holomorphically on the parameters
in the fixed Fr\'echet compact model
\cite[Section~7.2]{jacquet2009}. The uniform boundedness principle
therefore makes this family equicontinuous on compact parameter sets.
The Iwasawa decomposition expresses the right translate
by $t$ in terms of the section restricted to $U(n-1)$.
Each seminorm involving finitely many derivatives of this
translate is bounded by a polynomial in $t_i$ and $t_i^{-1}$
times a suitable seminorm of the original section.
These bounds are uniform on compact sets of inducing parameters.
Consequently, the following inequality holds uniformly on that parameter set:
\[
 |W_{n-1,w;\nu'}(t)|
 \le C\|w\|\prod_{i<n}(t_i+t_i^{-1})^M.
\]
For fixed $a$, the coordinates of $v_a(t)$ are Laurent
polynomials in $t$. After also including the determinant
and modulus characters, the absolute value of the integrand
in \eqref{upper:deformed-recurrence} is bounded by
\[
 C_a\|v\|\prod_{i<n}(t_i+t_i^{-1})^{M'}
 \exp\left[-\pi\sum_{i<n}
 \left(\frac{a_i^2}{t_i^2}+\frac{t_i^2}{a_{i+1}^2}\right)\right]
\]
for some constant $C_a$, depending on $a$ and the chosen compact
parameter set. This bound is integrable against $d^\times t$ and
independent of the parameters within that set.
Thus the integral on the right hand side of \eqref{upper:deformed-recurrence} is holomorphic in $\nu$. Since $\kappa_n(\nu)$ and  the Jacquet integral on the
left-hand side are entire in $\nu$, the identity
\eqref{upper:deformed-recurrence} extends holomorphically for every $\nu\in\mathbb C^n$.

Finally, set $\nu_i=m/2$ and apply the normalization
$F=\delta_{B(\mathbb R)}^{-1}(\det)^{-m}W$
in ranks $n$ and $n-1$.
The determinant and modulus factors cancel, yielding
\eqref{read:recursion}. Its
absolute convergence is obtained by the same bounds.
\end{proof}

\subsection{Evaluation by the Asai functional equation}\label{read:evaluation}
Consider the quotient:
\begin{equation}\label{read:Q}
 Q(s)=\frac{M_{n,\epsilon}(s)}{T_{n,\epsilon}(s)}
 =\frac{Z(s-m,W_{f_{n,\epsilon}},\Phi_{n,\epsilon})}
        {L(s-m,\pi,\As^+)}.
\end{equation}
The second equality follows from
\eqref{read:period-M} and \eqref{read:target-normalization}. We prove $Q(s)=1$ by combining the bounds of
Proposition~\ref{read:analytic-input} (2) with the Asai functional
equation \cite{beuzart2021archimedean} to obtain polynomial growth
on $\mathbb C$. Cauchy's estimates then make $Q$ a polynomial,
and boundedness on the positive real axis makes it constant.
Proposition~\ref{read:analytic-input} (3) identifies this constant as $1$.

Remove the common determinant twist by putting
$\pi^0=\pi\otimes|\det|_{\mathbb C}^{-m/2}$ and
$W^0(g)=|\det g|_{\mathbb C}^{-m/2}W_{f_{n,\epsilon}}(g)$.
Thus the same quotient $Q(s)$ is obtained by evaluating the corresponding
factors at $s$ for $\pi^0$.

The weight of the minimal $K$-type for the contragredient $(\pi^0)^\vee$ is
$d^\sharp=(-d_n,\ldots,-d_1)$. Denote the Whittaker function of $(\pi^0)^\vee$ via the Jacquet integral for the sections using the dual data by
$W^\vee$ and define
\[
 Q^\vee(s)=\frac{Z(s,W^\vee,\Phi^\vee_{n,\epsilon})}
                    {L(s,(\pi^0)^\vee,\As^+)}.
\]
For the tuple \(d^\sharp=(-d_n,\ldots,-d_1)\), we have
$b^\sharp_{ij}
=\frac{d^\sharp_i-d^\sharp_j}{2}
=\frac{d_{n+1-j}-d_{n+1-i}}2
=b_{n+1-j,n+1-i}$, and the common parity remains $\epsilon$. Thus by definition of \eqref{read:T} and \eqref{read:Schwartz}, we have $T_{n,\epsilon}^\vee=T_{n,\epsilon}$, $\Phi^\vee_{n,\epsilon}=\Phi_{n,\epsilon}$.

\begin{prop}\label{read:analytic-input}
Fix $n\ge2$, $r_i\ge1$, and the common parity $\epsilon$.
Put $\rho=\frac{n^2+n-4}{4}$, then
\begin{enumerate}
\item The Mellin integral $M_{n,\epsilon}(s)$ in \eqref{read:M} and the
Flicker integral $Z(s-m,W_{f_{n,\epsilon}},\Phi_{n,\epsilon})$ in
\eqref{read:Iwasawa} converge absolutely for $\Re s>0$.
\item There exists a constant $C>0$, depending only on
$n$, $r_1,\ldots,r_{n-1}$, and $\epsilon$, such that
\begin{equation}\label{read:right-bound}
 |Q(s)|\le C\left(\frac{1+|s|}{\Re s}\right)^\rho
 \qquad(\Re s\ge\tfrac12).
\end{equation}
\item For $s\in \mathbb{R}$,
$\lim_{s\to+\infty}Q(s)=1.$
\end{enumerate}
The same assertions hold for $Q^\vee$.
\end{prop}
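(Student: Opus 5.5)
The plan is to iterate the recurrence \eqref{read:recursion} down to rank one. This writes $F_{f}(a)$, for $a=\operatorname{diag}(x_1\cdots x_{n-1},\ldots,x_{n-1},1)$, as an iterated integral over variables $t^{(n-1)},t^{(n-2)},\ldots,t^{(1)}$ (with $t^{(k)}\in(0,\infty)^k$). The integrand is a Gaussian $\exp(-\pi\sum_k\|\mathcal B(t^{(k-1)},t^{(k)})\|^2)$ times a polynomial $\mathcal A$ in the bidiagonal entries $t^{(k-1)}_i/t^{(k)}_i$ and $t^{(k)}_i/t^{(k-1)}_{i+1}$. Here $\mathcal A$ is obtained by composing the Schur maps $\mathbb S_{\eta^{[k]}}(\mathcal B)$ and pairing with $v_{n,\epsilon}$ and the rank-one vector. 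The constant is the product $\prod_k(-1)^{(k-1)\epsilon}\kappa_k$.

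For part (1), I will insert this representation into the Mellin integral \eqref{read:M}. All variables are positive and $\mathcal A$ is a polynomial, so Tonelli applies: bounding $|\mathcal A|$ by the same expression with absolute values of coefficients gives a majorant of the same shape. I then change to coordinates in which every Gaussian exponent is a sum of squares of ratios. Each one-dimensional integral of the form $\int y^{\alpha}e^{-\pi(y^2+z^2/y^2)}$ converges, and the Mellin exponents $x_i^{is}$ then require only $\Re s>0$ for convergence near $x_i\to0$ and $\infty$. Convergence of the Flicker integral follows from \eqref{read:period-M}.

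For part (2), I introduce a common scaling variable. Writing all $t$'s and $x$'s as $\lambda^{1/2}$ times points on a hypersurface makes the Gaussian exponent $\lambda\mathcal V$, with $\mathcal V$ homogeneous. The polynomial $\mathcal A$ is homogeneous of total degree $2\Omega$ with $\Omega=\sum_{i<j}b_{ij}$, and the Mellin character contributes $\lambda^{Ns}$. Integrating in $\lambda$ then produces $\Gamma(Ns+\Omega)\int\mathcal V^{-Ns-\Omega}|\mathcal A|\cdots$. I bound $|\mathcal A|\le C\mathcal V^{\Omega}$ on the hypersurface, which reduces the estimate to the scalar case $\mathcal A\equiv1$, with exponent shifted by $\Re s$. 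That scalar integral is the Mellin transform of the spherical-type iterated Gaussian, i.e.\ the class-one $GL_n(\mathbb R)$ Whittaker function. By the Ishii--Stade formula \cite{ishii-stade}, in the normalization of \cite[Proposition~6.1]{chen-cheng-theta}, it equals an explicit product of $\Gamma(\Re s\cdot\,\cdot)$ divided by $\Gamma(N\Re s)$. Dividing by $|T_{n,\epsilon}(s)|$ and applying Stirling, the $\Gamma(Ns+\Omega)/\Gamma(N\Re s+\Omega)$ factor balances the $\prod\Gamma(s+b_{ij})$ shifts. The remaining ratio of $|\Gamma|$ at $s$ versus $\Re s$ gives the power $((1+|s|)/\Re s)^\rho$. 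I expect the main obstacle to be the bookkeeping of the exponent $\rho$. In particular, the factors $\Gamma((s+\epsilon)/2)^n/\Gamma(n(s+\epsilon)/2)$ contribute, via $|\Gamma(\sigma+i\tau)|\asymp|\tau|^{\sigma-1/2}e^{-\pi|\tau|/2}$, the half-integer powers that sum to $(n^2+n-4)/4$. I must also check that the exponentials $e^{-\pi|\tau|/2}$ cancel exactly, which uses the homogeneity match $\Omega$ versus $\sum b_{ij}$.

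For part (3), I take $s$ real and large and rescale variables by $s^{1/2}$. Laplace's method then concentrates the integral at the unique critical point of $\mathcal F=\mathcal V-\log\mathcal A$, identified through the scalar case as the point where all bidiagonal matrices are suitable multiples of partial isometries. At that point the flag polynomials reduce to $\mathcal G_j=1$ and $B_n=\det k\prod(k_j,e_n)$, by Lemma~\ref{read:B-harmonic}. This lets me compare the leading asymptotic with Stirling applied to $T_{n,\epsilon}$. The constants $\kappa_k$, $c=2^{n-1}$ and $A_{n,\epsilon}$ were designed so that the ratio tends to $1$. Finally, the statements for $Q^\vee$ follow identically, since $T^\vee=T$, $\Phi^\vee=\Phi$ and the dual data have the same shape with $b_{ij}$ permuted.
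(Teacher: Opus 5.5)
Your plan for (2) and (3) follows the paper's route: iterate \eqref{read:recursion} to get $M_{n,\epsilon}=K_{n,\epsilon}I_{P_\epsilon,n}$, integrate out the common scaling variable to produce $\Gamma(Ns+\Omega)$, dominate by the scalar integral evaluated via Ishii--Stade, apply Stirling, and use Laplace concentration for the limit. Part (3), however, has a genuine gap. Its whole content is the exact value $P_\epsilon(x_*)=n^{-n\epsilon/2}$, sign included, which must cancel the factor in $G_{\mathbf r,\epsilon}(s)/G_0(s)=n^{-n\epsilon/2}s^{\Omega}(1+O(s^{-1}))$. Saying that $\kappa_k$, $c$, $A_{n,\epsilon}$ ``were designed'' for this is the claim itself.

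Your description of the minimiser is also false for $n\ge3$. Consecutive rows of a bidiagonal $\mathcal B_k$ with positive entries have inner product $E_i^{(k)}D_{i+1}^{(k)}>0$, so for $k\ge2$ no $\mathcal B_k$ is a multiple of a partial isometry. For $n=3$, at $x_*$ one finds $\mathcal B_2\mathcal B_2^t=\bigl(\begin{smallmatrix}2/3&1/3\\1/3&5/3\end{smallmatrix}\bigr)$.

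The missing argument, as in the paper, runs as follows:
\begin{enumerate}
\item The critical equations yield tridiagonal $L_{k+1}=S_{k+1}\mathcal B_k^tS_k\mathcal B_k$ with $\mathcal B_kL_{k+1}=L_k\mathcal B_k$. Hence $L_n$ is nilpotent and $L_n+L_n^t=I_n-ne_n^te_n$.
\item An orthogonal $U$ adapted to the row flag of $X_\bullet$ conjugates $L_n$ to a strictly lower triangular $T$. The diagonal of $T+T^t$ then gives $U_{in}^2=1/n$.
\item The identity $L_n^{n-j}=S_nX_j^tS_jX_j$, with Cauchy--Binet, gives $\det(X_jX_j^t)=1$, hence $\Lambda_j(X_j)=\pm\Lambda_j(U)$.
\item After normalising $U_{in}=n^{-1/2}$, the Krylov determinant satisfies $\det(e_n;e_nL_n;\ldots;e_nL_n^{n-1})=\mathcal A=n^{-n/2}\det U$. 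Since $\mathcal A>0$, this forces $\det U=1$.
\end{enumerate}
Only then does \eqref{read:B-orthogonal} give $P_\epsilon(x_*)=(\det U\prod_iU_{in})^\epsilon=n^{-n\epsilon/2}$. Without step 4 you cannot even exclude the limit $-1$ when $\epsilon=1$.

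\textbf{Smaller repairs.}
\begin{itemize}
\item In (3), the function to minimise is $\mathcal V$ minus the log of the Mellin weight $\prod_{i<n}a_i^{(n)}$, not of your polynomial (which you also denoted $\mathcal A$).
\item Also in (3), rescale by $\tfrac12\log(s/\pi)\mathbf v$ and take the ratio with $I_{1,n}(s)$ as in \eqref{pub:Laplace-ratio-integral}, using \eqref{pub:I1-value}. A one-sided Laplace asymptotic compared directly with $T_{n,\epsilon}$ would additionally require the Hessian of $\mathcal F$ at $x_*$.
\item In (2), the comparison factor is $|\Gamma(Ns+\Omega)|/\Gamma(N\sigma)$, as you first wrote, not $/\Gamma(N\sigma+\Omega)$. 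The latter would force $|Q(\sigma)|\lesssim\sigma^{-\Omega}$, contradicting (3).
\item Also in (2), the vertical asymptotic $|\Gamma(\sigma+i\tau)|\asymp|\tau|^{\sigma-1/2}e^{-\pi|\tau|/2}$ is not uniform as $\sigma\to\infty$. Use complex Stirling instead: $\Gamma(Ns+\Omega)/G_{\mathbf r,\epsilon}(s)=C_0a_0^ss^\rho(1+O(|s|^{-1}))$ with $a_0=N^Nn^{n/2}$. The factor $a_0^\sigma$ then cancels against $G_0(\sigma)/\Gamma(N\sigma)\le Ca_0^{-\sigma}\sigma^{-\rho}$.
\item In (1), \eqref{read:period-M} was derived using cancellation in the $O(n)$-integral, so absolute convergence of $M_{n,\epsilon}$ does not give that of the Flicker integral. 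Either bound $|W(ak)|$ through every vector in the $O(n)$-span of $v_{n,\epsilon}$ (the recurrence holds for all $v$), or, as the paper does, take the Flicker convergence from Beuzart-Plessis and deduce $M_{n,\epsilon}$ via the triangle inequality in Lemma~\ref{read:Schur}.
\item Also in (1), for $n\ge3$ the $2N$ ratios $D_i^{(k)},E_i^{(k)}$ outnumber the $D$ coordinates. The integral therefore does not split into one-dimensional Gaussians, and the domination needs the positive decomposition \eqref{pub:positive-decomposition} of the Mellin weight.
\end{itemize}
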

Assertion (1) follows from \cite[Theorem~1,(i)]{beuzart2021archimedean}.
Since $\pi^0$ is tempered,
\cite[Lemma~3.3.1 and the proof of Lemma~3.3.2]{beuzart2021archimedean}
give absolute convergence of $Z(s,W^0,\Phi_{n,\epsilon})$ for
$\Re s>0$. Removing the determinant twist gives
\[
 Z(s-m,W_{f_{n,\epsilon}},\Phi_{n,\epsilon})
 =Z(s,W^0,\Phi_{n,\epsilon}),
\]
with the same absolute convergence. To deduce convergence of
$M_{n,\epsilon}$, put $f=f_{n,\epsilon}$,
$\Phi=\Phi_{n,\epsilon}$, $\sigma=\Re s>0$, and
$a(x)=\operatorname{diag}(x_1\cdots x_{n-1},\ldots,x_{n-1},1)$.
Lemma~\ref{read:Schur}, the triangle inequality, and
\eqref{read:Iwasawa} give
\[
B_\sigma\int_{(0,\infty)^{n-1}}
|F_f(a(x))|\prod_{i<n}x_i^{i\sigma}\frac{dx_i}{x_i}\le
\int_{N_n(\mathbb R)\backslash G_n(\mathbb R)}
|W_f(g)\Phi(e_ng)|\,|\det g|^{\sigma-m}\,dg<\infty,
\]
where
\[
 B_\sigma=|A_{n,\epsilon}|c^{n\epsilon/2}
 \int_0^\infty t^{n(\sigma+\epsilon)}e^{-\pi ct^2}\frac{dt}{t}
 \in(0,\infty).
\]
Thus $M_{n,\epsilon}(s)$ converges absolutely for $\Re s>0$.
The same argument applies to the dual data.

Section~\ref{read:analytic} proves assertions (2) and (3) from the
recurrence relation \eqref{read:recursion}. The proof uses the evaluation of the integral $I_{1,n}(s)$, obtained by
setting $P=1$ in \eqref{pub:network-integral}, and the value of $P_\epsilon$ at the unique
minimum of $\mathcal F=\mathcal V-\log\mathcal A$. We point out that the main reason we use this analytic input
together with the Asai functional equation to prove
Theorem~\ref{read:main} is that the Flicker integral becomes nested
under induction via the Godement section, so the lower-rank
Asai $L$-functions cannot be separated from it directly.
\begin{thm}\label{read:main}
For the section \eqref{read:lowest-polynomials} and the
Schwartz function \eqref{read:Schwartz},
\begin{equation}\label{read:Asai-equality}
 Z(s,W_{f_{n,\epsilon}},\Phi_{n,\epsilon})=L(s,\pi,\As^+).
\end{equation}
The equality holds by absolute convergence for $\Re(s)>-m$
and by meromorphic continuation elsewhere.
\end{thm}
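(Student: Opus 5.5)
By \eqref{Flicker-Asai-equivalence} and \eqref{read:Q}, it suffices to prove that $Q(s)=M_{n,\epsilon}(s)/T_{n,\epsilon}(s)$ is identically $1$. The plan follows the strategy announced in Subsection~\ref{read:evaluation}: first show $Q$ is entire of polynomial growth, then constant, then identify the constant. Holomorphy of $Q$ on $\mathbb C$ is Theorem~\ref{read:BP-Asai}\eqref{read:BP-holomorphy}, applied to $W^0$ and $\Phi_{n,\epsilon}$ for the tempered twist $\pi^0$. The same argument gives holomorphy of $Q^\vee$.

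Next I would relate $Q$ and $Q^\vee$ by the functional equation \eqref{read:BP-FE} for $\pi^0$. This requires identifying two pieces of data. First, $\widetilde{W^0}$ must be a nonzero multiple of $W^\vee$. Both lie in the minimal $K$-type of $(\pi^0)^\vee$ and on the lowest $O(n-1)$-fixed harmonic line, which is canonical by Theorem~\ref{thm:structure}, so they are proportional. One must also track the change $\psi\to\psi^{-1}$, which I would handle by conjugating with $\mathrm{diag}(\pm1,\dots)$, harmless on an $O(n)$-type up to sign. Second, $\widehat{\Phi_{n,\epsilon}}$ must be a multiple of $\Phi_{n,\epsilon}$. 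The function is a zonal harmonic of degree $n\epsilon$ times a Gaussian, so by Hecke's identity its Fourier transform is $\mathrm i^{n\epsilon}c^{-n/2}H_{n\epsilon}^{(n)}(y/\sqrt c)e^{-\pi\|y\|^2/c}$. This is a different Gaussian scale, and I would absorb it by the substitution $g\mapsto c^{\pm1}g$ in the Flicker integral, which only multiplies $Z$ by an exponential factor $c^{\pm ns}$. Because the Artin $\varepsilon$-factor of an archimedean parameter is a constant times an exponential in $s$, the identity takes the form $Q(1-s)\,e^{\alpha s}=c_\pi Q^\vee(s)$, up to the swap of roles between $Q$ and $Q^\vee$. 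Here $\alpha$ and $c_\pi\neq0$ are explicit. I would check that $\alpha$ is real and that it cancels, because the factors $|z_0|^{N s}$, $c^{-ns/2}$ from \eqref{read:period-M}, and the normalization $c=2^{n-1}$ were designed to match; $c^{nN/\ldots}$ versus $4^{-Ns}$ should give $2^{n(n-1)s/2}\cdot2^{-2Ns/2}$-type cancellation. If the cancellation is not exact, the residual factor has modulus $e^{\alpha\Re s}$, which is bounded on vertical strips, and the argument still works.

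With this relation, Proposition~\ref{read:analytic-input}(2) bounds $|Q(s)|$ by $C(1+|s|)^\rho$ on $\Re s\ge 1$ (where $\Re s\ge1$ controls the denominator). Applied to $Q^\vee$, the same proposition bounds $Q$ by $C'(1+|s|)^\rho$ on $\Re s\le 0$. In the strip $0\le\Re s\le1$, I would use Phragmén–Lindelöf. This needs an a priori finite-order bound for $Q$ in the strip, and I would obtain it from the Flicker integral's standard polynomial-in-$|\Im s|$ bounds divided by Stirling for $L$. Alternatively, I would shift to $\Re s\ge\frac12$ and $\Re s\le\frac12$, which the proposition already covers once the bound for $Q^\vee$ is used on $\Re(1-s)\ge\frac12$. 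The latter makes Phragmén–Lindelöf unnecessary. Then $Q$ is entire with $|Q(s)|\le C''(1+|s|)^\rho$ everywhere, so Cauchy's estimates make $Q$ a polynomial of degree at most $\rho$.

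To finish, on the real axis $s\ge1$ the bound in \eqref{read:right-bound} gives $|Q(s)|\le C(1+s)^\rho/s^\rho\le 2^\rho C$, which is bounded. A polynomial bounded on $[1,\infty)$ is constant, and Proposition~\ref{read:analytic-input}(3) gives that the constant is $1$. Hence $M_{n,\epsilon}=T_{n,\epsilon}$, and \eqref{read:Asai-equality} follows by \eqref{Flicker-Asai-equivalence}. Absolute convergence for $\Re s>-m$ is Proposition~\ref{read:analytic-input}(1) after the shift $s\mapsto s+m$, and meromorphic continuation extends the identity to all $s$.

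The main obstacle is the bookkeeping in the functional-equation step. Proving that $\widetilde{W^0}$ is exactly proportional to $W^\vee$, and computing $\widehat{\Phi}$ together with its rescaling, is what produces a clean relation $Q(s)=c_\pi Q^\vee(1-s)$ with no growing exponential factor. I would first try to verify directly that the $s$-dependent constants cancel, using $|z_0|_{\mathbb C}=1/4$ and $c=2^{n-1}$. Failing that, I would argue with the modulus bound on vertical lines only.
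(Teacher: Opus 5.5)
Your outline follows the paper's proof step for step. Beuzart-Plessis gives holomorphy of $Q$ and $Q^\vee$. The functional equation for $\pi^0$, the harmonic Fourier transform, the rescaling $g\mapsto cg$, and a left translation by $D_-=\operatorname{diag}((-1)^{n-1},\ldots,-1,1)$ (to return from $\psi^{-1}$ to $\psi$) give $Q(s)=c_\pi Q^\vee(1-s)$. The proof then concludes with the bounds on $\Re s\ge\frac12$ and $\Re s\le\frac12$, Cauchy's estimates, boundedness on the positive real axis, and Proposition~\ref{read:analytic-input}(3).

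The one thing to correct is your fallback for the $s$-dependent factor, and with it the fact that you left the cancellation unverified. A residual factor $e^{\alpha s}$ is not harmless. The Cauchy-estimate step needs polynomial growth on all of $\mathbb C$. A relation $Q(s)=c\,e^{\alpha s}Q^\vee(1-s)$ with real $\alpha<0$ only gives $|Q(s)|\le C e^{\alpha\Re s}(1+|s|)^\rho$ on $\Re s\le\frac12$, and this grows exponentially as $\Re s\to-\infty$. Boundedness on vertical strips does not help. For example, $Q(s)=1+e^{\alpha s}$, $Q^\vee(s)=1+e^{\alpha(s-1)}$, $c=1$ satisfy every other input you use: both are entire, bounded on $\Re s\ge\frac12$, and tend to $1$ at $+\infty$. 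Yet $Q$ is not constant.

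So the exact cancellation is essential, and it does hold, as follows.
\begin{itemize}
\item For $\psi_{\mathbb R}(x)=e^{2\pi ix}$, the Artin factor is the constant $i^{n\epsilon+2\Omega+N}$, and $|z_0|_{\mathbb C}^{N(s-1/2)}=4^{-N(s-1/2)}=2^{N-2Ns}$.
\item On the other side, $\widehat\Phi_{A,c}=c^{-n/2}i^{n\epsilon}\Phi_{A,c}(\cdot/c)$. The substitution $g=cg'$ uses $\widetilde W^0(cg')=\widetilde W^0(g')$, which holds because $\pi^0$ has trivial central character on $\mathbb R_{>0}$. Together these produce $c^{n(1/2-s)}$.
\item Since $c=2^{n-1}$ and $n(n-1)=2N$, this factor equals $2^{N-2Ns}$, matching the first side exactly.
\end{itemize}
The factor $c^{-ns/2}$ from \eqref{read:period-M} that you mention plays no role in this step; it is already absorbed into the normalization of $Q$.
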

\begin{proof}By Proposition~\ref{read:analytic-input}(1), the Flicker
integral $Z(s,W_{f_{n,\epsilon}},\Phi_{n,\epsilon})$ is absolutely
convergent for $\Re s>-m$. It remains to prove that the integral
equals the Asai $L$-factor.

By \cite[Theorem 3.5.2]{beuzart2021archimedean}, the quotients $Q$ and $Q^\vee$, obtained by dividing the
Flicker integrals by their corresponding Asai $L$-factors,
extend holomorphically to all of $\mathbb C$. Put $\Phi=\Phi_{n,\epsilon}$ and
$\widetilde W^0(g)=W^0(\dot w_ng^{-t})$. Recall that
 $\psi(z)=\psi_{\mathbb R}
 \bigl(\operatorname{Tr}_{\mathbb C/\mathbb R}(z_0z)\bigr)$, where $z_0=-i/2$. Put $N=\binom n2=\frac{n(n-1)}{2}$. The factor contributed by $|z_0|_{\mathbb C}$ in the
functional equation of the Asai L-function is $
 |z_0|_{\mathbb C}^{N(s-1/2)}
 =(1/4)^{N(s-1/2)}=2^{N-2Ns}$.
Thus \cite[Theorem~3.5.1]{beuzart2021archimedean} gives
\begin{equation}\label{read:FE}
 \frac{Z(1-s,\widetilde W^0,\widehat\Phi)}
      {L(1-s,(\pi^0)^\vee,\As^+)}
 =\zeta_\pi\,2^{N-2Ns}Q(s),
\end{equation}
where
\[
 \zeta_\pi=
 \omega_{\pi^0}(-i/2)^{n-1}
 \lambda_{\mathbb C/\mathbb R}(\psi_{\mathbb R})^{-N}
 \varepsilon_{\mathrm{Art}}
 (1/2,\As^+(\phi_{\pi^0}),\psi_{\mathbb R}).
\]
Here $\omega_{\pi^0}$ is the central character of $\pi^0$, $\varepsilon_{\mathrm{Art}}
 (1/2,\As^+(\phi_{\pi^0}),\psi_{\mathbb R})$ is the Artin $\epsilon$-factor. 
$\lambda_{\mathbb C/\mathbb R}(\psi_{\mathbb R})
 =\varepsilon_{\mathrm{Art}}(1/2,\operatorname{sgn},\psi_{\mathbb R})=i$ is the Langlands $\lambda$-function.
The Langlands parameter of the Asai-transfer of $\pi^0$ decomposes as
\[
 \As^+(\phi_{\pi^0})\simeq
 (\operatorname{sgn}^{\epsilon})^{\oplus n}
 \oplus
 \bigoplus_{i<j}
 \operatorname{Ind}_{W_{\mathbb C}}^{W_{\mathbb R}}\chi_{d_i-d_j}.
\]
For $\psi_{\mathbb R}(x)=e^{2\pi ix}$, the epsilon factors
of $\operatorname{sgn}^{\epsilon}$ and $\operatorname{Ind}_{W_{\mathbb C}}^{W_{\mathbb R}}\chi_{d_i-d_j}$ are
$i^\epsilon$ and $i^{d_i-d_j+1}$, respectively
\cite[Section~2.4]{adrianTakeda2024local}.
They are independent of $s$. By
additivity of Artin $\varepsilon$-factors, with $\Omega=\sum_{i<j}b_{ij}$, we obtain
\[
 \varepsilon_{\mathrm{Art}}
 (1/2,\As^+(\phi_{\pi^0}),\psi_{\mathbb R})
 =i^{n\epsilon+\sum_{i<j}(d_i-d_j+1)}
 =i^{n\epsilon+2\Omega+N}.
\]
Moreover, $\omega_{\pi^0}(-i/2)=(-i)^{\sum_i d_i}$, $
 \lambda_{\mathbb C/\mathbb R}(\psi_{\mathbb R})=i$. 
Substituting these values, we obtain
\[
 \zeta_\pi=(-i)^{(n-1)\sum_i d_i}\,
   i^{-N}\,i^{n\epsilon+2\Omega+N}
 =i^{n\epsilon}(-1)^{\sum_i(i-1)d_i}
 =i^{n\epsilon}(-1)^{N\epsilon}
 =i^{n^2\epsilon}.
\]
The second equality uses
$2\Omega=\sum_i(n+1-2i)d_i$, and the third uses
$d_i\equiv\epsilon\pmod2$.

For a homogeneous harmonic polynomial $H$ of degree $\ell$,
the identities
\[
 \widehat{e^{-\pi c\|x\|^2}}(y)=c^{-n/2}e^{-\pi\|y\|^2/c},
 \qquad
 H(\partial_y)e^{-\pi\|y\|^2/c}
 =(-2\pi/c)^\ell H(y)e^{-\pi\|y\|^2/c}
\]
give, with our positive-sign Fourier kernel,
\begin{equation}\label{read:Fourier-harmonic}
 \widehat{H(\sqrt c\,x)e^{-\pi c\|x\|^2}}(y)
 =c^{-n/2}i^\ell H(y/\sqrt c)e^{-\pi\|y\|^2/c}.
\end{equation}

Taking $\ell=n\epsilon$, we obtain
\[
 \widehat\Phi_{A,c}
 =c^{-n/2}i^{n\epsilon}\Phi_{A,1/c},
 \qquad
 \Phi_{A,1/c}(x)=\Phi_{A,c}(x/c).
\]
Since $\pi^0$ has trivial central character on
$\mathbb R_{>0}$, one has
$\widetilde W^0(cg)=\widetilde W^0(g)$ for $c>0$.
The substitution $g=cg'$ preserves the quotient measure
and contributes $c^{n(1-s)}$ from the determinant.
Consequently,
\begin{equation}\label{read:FE-cancellation}
 Z(1-s,\widetilde W^0,\widehat\Phi_{A,c})
 =i^{n\epsilon}c^{n(1/2-s)}
   Z(1-s,\widetilde W^0,\Phi_{A,c})=i^{n\epsilon}2^{N-2Ns}
   Z(1-s,\widetilde W^0,\Phi_{A,c}),
\end{equation}
where the last equality uses $c=2^{n-1}$ and $2N=n(n-1)$.

We next compare $\widetilde W^0$ with $W^\vee$. The map $
 \mathcal T(W)(g)=W(\dot w_ng^{-t})
$
takes the $\psi$-Whittaker model of $\pi^0$ to the
$\psi^{-1}$-Whittaker model of $(\pi^0)^\vee$ and satisfies
$ \mathcal T R(k)=R(\bar k)\mathcal T,
 \ k\in U(n)$. It therefore sends the minimal $U(n)$-type to the dual
minimal $U(n)$-type and commutes with the action of $O(n)$. To restore the character $\psi$ from $\psi^{-1}$, put 
\[
 D_-=\operatorname{diag}((-1)^{n-1},\ldots,-1,1),
 \qquad W^+(g)=\widetilde W^0(D_-g).
\]
We have
$\psi(D_-uD_-^{-1})=\psi^{-1}(u)$ for $u\in N_n(\mathbb C)$.
Thus the generic character for $W^+$ is $\psi$.
Left translation by $D_-$ is invertible and commutes with the
right $K$-action. Thus $W^+$ is also a nonzero $O(n-1)$-fixed vector
of harmonic degree $n\epsilon$ in the dual minimal $K$-type.
By Theorem~\ref{thm:structure}, the $O(n-1)$-fixed line in this
harmonic component is one-dimensional. Both $W^+$ and $W^\vee$
belong to this line, so $W^+=a_\pi W^\vee$
for a nonzero scalar $a_\pi$, independent of $s$. As $D_-$ normalizes $N_n(\mathbb R)$,
and $d(D_- uD_-^{-1})=du$ on $N_n(\mathbb{C})$, we see that $g\mapsto D_-g$ preserves the quotient measure.
It also preserves $|\det g|$ and $e_ng$, since
$|\det D_-|=1$ and $e_nD_-=e_n$.
Using also $\Phi^\vee_{n,\epsilon}=\Phi_{n,\epsilon}
=\Phi_{A,c}$, we obtain
\[
 Z(1-s,\widetilde W^0,\Phi_{A,c})
 =Z(1-s,W^+,\Phi_{A,c})
 =a_\pi Z(1-s,W^\vee,\Phi^\vee_{n,\epsilon}).
\]
Substituting this equality into
\eqref{read:FE-cancellation}, dividing by
$L(1-s,(\pi^0)^\vee,\As^+)$, and comparing with
\eqref{read:FE}, we obtain
\[
 \zeta_\pi\,2^{N-2Ns}Q(s)
 =i^{n\epsilon}2^{N-2Ns}a_\pi Q^\vee(1-s).
\]
Cancelling the nonzero common factor yields
\begin{equation}\label{read:dual-Q}
 Q(s)=c_\pi Q^\vee(1-s),\qquad
 c_\pi=\frac{i^{n\epsilon}a_\pi}{\zeta_\pi}\ne0.
\end{equation}

For $\Re s\ge1/2$, Proposition~\ref{read:analytic-input}(2) gives
$|Q(s)|\le C(1+|s|)^\rho$. For $\Re s\le1/2$, apply the same
bound to $Q^\vee(1-s)$ and use \eqref{read:dual-Q}. Thus the bound
holds on $\mathbb C$. Since $Q$ is entire, Cauchy's estimates give
\[
 |Q^{(j)}(0)|\le j!C\frac{(1+R)^\rho}{R^j}\longrightarrow0
 \qquad(R\to\infty,\ j>\rho).
\]
Hence $Q$ is a polynomial. For real $s\ge1/2$, Proposition~\ref{read:analytic-input} (2)
gives the sharper bound
\[
 |Q(s)|\le C_1\left(\frac{1+s}{s}\right)^\rho
 =C_1(1+s^{-1})^\rho\le3^\rho C_1.
\]
Thus $Q(s)$ is bounded as $s\to+\infty$.
If it had degree $d\ge1$ and leading coefficient $a_d\ne0$,
then $|Q(s)|\sim|a_d|s^d$ as $s\to+\infty$, contradiction. Hence $Q$ is a constant.
Proposition~\ref{read:analytic-input}(3) identifies the constant as $1$.
Evaluating \eqref{read:Q} at $s+m$ proves
$Z(s,W_{f_{n,\epsilon}},\Phi_{n,\epsilon})=L(s,\pi,\As^+)$.
\end{proof}

\section{Analytic estimates and the normalization constant}\label{read:analytic}
We prove parts (2) and (3) of Proposition~\ref{read:analytic-input}
using the recurrence relation \eqref{read:recursion} and
our chosen polynomial realization of test vectors. Iteration expresses $M_{n,\epsilon}(s)$ as
$K_{n,\epsilon}$ times an integral with a homogeneous polynomial
factor. We use this formula to prove the bound and show that
$\lim_{s\to+\infty}Q(s)=1$.
Throughout this section, $n$, $r_i\ge1$, and $\epsilon\in\{0,1\}$ are fixed. Put
$N=\binom n2$, $D=N+n-1$, $ \Omega=\sum_{i<j}b_{ij}=\sum_{j<n}j(n-j)r_j$. Then $\rho=(D-1)/2$ in Proposition~\ref{read:analytic-input}. All constants in this section may depend on the fixed data, but are independent of $s$.

\subsection{Iteration of the recurrence}
\label{pub:analytic-convergence}
For $1\le i\le k\le n$, introduce the variables $a_i^{(k)}>0$, with
$a_n^{(n)}=1$, and write $x_i^{(k)}=\log a_i^{(k)}$.
For each $k$, there are $k$ coordinates $x_i^{(k)}$, $1\le i \le k$.
Since $x_n^{(n)}=0$ is fixed, the remaining
$\sum_{k=1}^n k-1=\binom n2+n-1=D$ coordinates form
$x\in\mathbb R^D$, with Lebesgue measure $dx$. For $1\le k<n$, let $\mathcal B_k$
be the $k\times(k+1)$ matrix whose only nonzero entries are
\begin{equation}\label{pub:matrix-entries}
 D_i^{(k)}=(\mathcal B_k)_{ii}=\frac{a_i^{(k+1)}}{a_i^{(k)}},
 \qquad
 E_i^{(k)}=(\mathcal B_k)_{i,i+1}
                    =\frac{a_i^{(k)}}{a_{i+1}^{(k+1)}}.
\end{equation}
Thus $\mathcal B_k$ is the matrix in \eqref{read:bidiagonal}
with $a_i$ replaced by $a_i^{(k+1)}$ and $t_i$ by $a_i^{(k)}$. Set
\begin{equation}\label{pub:network-definitions}
 X_j=\mathcal B_j\cdots\mathcal B_{n-1},\quad X_n=I_n,\qquad
 \mathcal V=\sum_{k<n}\|\mathcal B_k\|^2,\qquad
 \mathcal A=\prod_{i<n}a_i^{(n)}.
\end{equation}
Here $\|B\|^2=\sum_{i,j}B_{ij}^2=\operatorname{tr}(BB^t)$.
Let $P$ be a polynomial in the entries of the matrices
$\mathcal B_k$. We denote by $P(x)$ the function of the logarithmic
coordinates obtained by substituting
\[
 D_i^{(k)}=e^{x_i^{(k+1)}-x_i^{(k)}},\qquad
 E_i^{(k)}=e^{x_i^{(k)}-x_{i+1}^{(k+1)}},
\]
with $x_n^{(n)}=0$, as in \eqref{pub:matrix-entries}. Define
\begin{equation}\label{pub:network-integral}
 I_{P,n}(s)=\int_{\mathbb R^D}
             P(x)e^{-\pi\mathcal V(x)}\mathcal A(x)^s\,dx.
\end{equation}

Beuzart--Plessis's convergence result \cite{beuzart2021archimedean}
already gives convergence of the Flicker integral and hence of
$M_{n,\epsilon}$, as explained after
Proposition~\ref{read:analytic-input}. It does not directly imply
absolute convergence of \eqref{pub:network-integral}: taking the
absolute value before the internal integrations is a stronger
requirement. The following lemma supplies this control for arbitrary
$P$, together with the estimates needed for Fubini's theorem and
differentiation under the integral.

\begin{lem}\label{pub:convergence}
The integral $I_{P,n}(s)$ converges absolutely and is holomorphic for $\Re s>0$. More generally, the same assertions
hold with $\mathcal A^s$ replaced by
$\prod_{i<n}(a_i^{(n)}/a_{i+1}^{(n)})^{s_i}$ when
$\Re s_i>0$, locally uniformly after any number of derivatives
in each $s_i$.
\end{lem}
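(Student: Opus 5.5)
The plan is to majorize the integrand by a product of one-dimensional functions after a triangular change of variables adapted to the network structure. Since $P$ is a polynomial in the entries $D_i^{(k)},E_i^{(k)}$, and these are exponentials of linear forms in $x$, one has
\[
|P(x)|\le C_P\prod_{k<n}\prod_{i}\bigl(D_i^{(k)}+(D_i^{(k)})^{-1}\bigr)^{M}\bigl(E_i^{(k)}+(E_i^{(k)})^{-1}\bigr)^{M}
\]
for some $M$. Moreover, $|\mathcal A(x)^s|=\mathcal A^{\sigma}$ with $\sigma=\Re s$. It therefore suffices to prove
\[
\int_{\mathbb R^D}\prod_{k,i}\bigl(D_i^{(k)}\bigr)^{\alpha_{k,i}}\bigl(E_i^{(k)}\bigr)^{\beta_{k,i}}e^{-\pi\mathcal V}\mathcal A^{\sigma}\,dx<\infty
\]
for all real exponents $\alpha,\beta$ in a bounded set. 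The factor $\mathcal A^\sigma$ should also be allowed to be replaced by $\prod_{i<n}(a_i^{(n)}/a_{i+1}^{(n)})^{\sigma_i}$ with $\sigma_i>0$.

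The integration proceeds from the innermost level outward, as in the proof of Proposition~\ref{read:Jacquet-recursion}. Fix the top row $a^{(n)}$ and integrate first over the variables $x^{(1)}$, then $x^{(2)}$, and so on up to $x^{(n-1)}$. At level $k$ the variable $a_i^{(k)}$ enters $\mathcal V$ only through the terms
\[
(a_i^{(k+1)}/a_i^{(k)})^2+(a_i^{(k)}/a_{i+1}^{(k+1)})^2
\]
coming from $\mathcal B_k$, and through terms of $\mathcal B_{k-1}$ involving the lower level. The terms from $\mathcal B_{k-1}$ are dropped, using $e^{-\pi(\cdot)}\le1$, only after they have served the inner integration. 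The elementary estimate is
\[
\int_0^\infty y^{\gamma}e^{-\pi(u^2/y^2+y^2/w^2)}\frac{dy}{y}\le C_\gamma\,(uw)^{\gamma/2}e^{-c\,u/w}\bigl(1+(w/u)^{|\gamma|}\bigr).
\]
It is a Bessel $K$-type bound, valid for every real $\gamma$. It converts each integration at level $k$ into a factor $e^{-c\,a_i^{(k+1)}/a_{i+1}^{(k+1)}}$ times powers of $a^{(k+1)}$. Iterating up to level $n-1$ leaves an integral over $a^{(n)}$, with $a_n^{(n)}=1$. Its integrand is a product of powers of the ratios $y_i=a_i^{(n)}/a_{i+1}^{(n)}$ times $\exp(-c\sum_i y_i)$, together with the residual Gaussians. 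These residual Gaussians decay as each $y_i\to\infty$. As $y_i\to0$, the only surviving weights are $y_i^{\sigma_i}$ multiplied by bounded powers.

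The main obstacle is this final step at $y_i\to0$. A careless bound produces negative powers of $y_i$ that $y_i^{\sigma_i}$ cannot absorb. I would avoid this by not bounding the Bessel-type integrals crudely. For small $u/w$ I would keep the exact asymptotics: the integral is about $(uw)^{\gamma/2}$ times a function of $u/w$ that is bounded by $(u/w)^{-|\gamma|}$ only when $\gamma<0$. I would also track the exponents carefully; they are built from the factors of $P$ whose degrees are nonnegative in each $D,E$. If needed, I would first reduce to the case where $P$ is a monomial with nonnegative exponents in the $D_i^{(k)},E_i^{(k)}$, which is the actual situation because $P$ is a polynomial. Then all $\gamma$'s that appear are $\ge0$, so no negative powers arise, and integrability near $y_i=0$ needs only $\sigma_i>0$.

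Once the majorant is integrable and locally uniform in $\sigma$, the remaining claims are routine. Holomorphy and differentiation under the integral follow from dominated convergence, since $\partial_{s_i}$ only introduces factors $\log y_i$, and these are absorbed by slightly shrinking $\sigma_i$.
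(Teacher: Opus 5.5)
Your route (integrate the network level by level with one-variable Bessel-type bounds, and use the weight $\prod_i y_i^{\sigma_i}$ only at the top) is a legitimate alternative in principle. As written, however, it fails at exactly the step you flag as the main obstacle.

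\textbf{The one-variable estimate.} With $\lambda=u/w$, your integral equals $(uw)^{\gamma/2}F_\gamma(\lambda)$, where $F_\gamma(\lambda)=\int_0^\infty t^{\gamma}e^{-\pi\lambda(t^2+t^{-2})}\,dt/t$. This function is even in $\gamma$. As $\lambda\to0$ it grows like $\lambda^{-|\gamma|/2}$ for $\gamma\ne0$ and like $\log(1/\lambda)$ for $\gamma=0$. So your stated bound is false for $\gamma=0$, a case that already occurs for $P=1$. For $\gamma\ne0$ it overestimates by a factor $\lambda^{-|\gamma|/2}$, and this loss is fatal.

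\textbf{The rescue claim.} You argue that nonnegative exponents in the $D_i^{(k)},E_i^{(k)}$ force every $\gamma\ge0$. This is false. Since $D_i^{(k)}=a_i^{(k+1)}/a_i^{(k)}$, a factor $(D_i^{(k)})^{\alpha}(E_i^{(k)})^{\beta}$ gives $a_i^{(k)}$ the exponent $\gamma=\beta-\alpha$. Already for $n=2$, the test polynomial $P_{2,0}=(D^2+E^2)^{r_1}$ contains $D^{2r_1}=(y_1/t)^{2r_1}$, so $\gamma=-2r_1$. Your estimate times the prefactor $y_1^{2r_1}$ gives the majorant $Ce^{-cy_1}(y_1^{r_1}+y_1^{-r_1})$. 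This is not integrable against $y_1^{\sigma}\,dy_1/y_1$ when $\sigma\le r_1$, whereas the true inner integral tends to $\tfrac12\pi^{-r_1}\Gamma(r_1)$ as $y_1\to0$. For larger $n$, such spurious negative powers are passed up to the next level and accumulate. Your opening reduction to real exponents of both signs is false for the same reason: a factor $D^{-M}$ with $M\ge\sigma$ already makes the $n=2$ integral diverge.

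\textbf{What is missing.} You need to keep the prefactor in the bookkeeping. Integrating $a_i^{(k)}$ against $(D_i^{(k)})^{\alpha}(E_i^{(k)})^{\beta}$ gives exactly $\lambda^{(\alpha+\beta)/2}F_{\beta-\alpha}(\lambda)$, with $\lambda=a_i^{(k+1)}/a_{i+1}^{(k+1)}$. This is of size $\lambda^{\min(\alpha,\beta)}$ as $\lambda\to0$ (with a logarithm if $\alpha=\beta$) and exponentially small as $\lambda\to\infty$. The resulting factor $\lambda^{c}$ splits as $(E_i^{(k+1)})^{c}(D_{i+1}^{(k+1)})^{c}$. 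Hence, for $\alpha,\beta\ge0$, all exponents stay nonnegative up to arbitrarily small losses, and the sign of $\gamma$ is irrelevant. Alternatively, first reduce to $P=1$ using $|P|\le C_P(1+\mathcal V)^{\deg P/2}$ together with $e^{-(\pi-b)\mathcal V}$, so that only logarithmic losses occur in the iteration.

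\textbf{The paper's route.} The paper avoids iteration entirely. Via \eqref{pub:positive-decomposition} it rewrites $\sum_i\sigma_i(x_i^{(n)}-x_{i+1}^{(n)})$ as $\sum_e\beta_e\ell_e(x)$ with every $\beta_e>0$; the choice $z_i=\tfrac12\min(\sigma_i,\sigma_{i+1})$ is your $\min(\alpha,\beta)$ mechanism run from the top down. It then bounds each one-variable term $-be^{2u}+\beta_eu+\delta|u|$ above. Finally, since the $\ell_e$ have trivial common kernel, $\sum_e|\ell_e|\ge c\|x\|$, which gives an exponentially decaying majorant in one step.
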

\begin{proof}
We prove the multivariable assertion. Let $\ell_e$ range over
$\ell^D_{k,i}=x_i^{(k+1)}-x_i^{(k)}$ and
$\ell^E_{k,i}=x_i^{(k)}-x_{i+1}^{(k+1)}$, so
$\mathcal V=\sum_e e^{2\ell_e}$. Their common kernel is zero:
if all $\ell_e(x)$ vanish, all coordinates equal $x_n^{(n)}=0$.
Consequently, the continuous function $S(x):=\sum_e|\ell_e(x)|$ is positive whenever \(x\ne0\). Let $
c:=\min_{\|u\|=1}S(u)>0$. 
For any \(x\ne0\), by linearity we have
\begin{equation}\label{pub:linear-form-norm}\sum_e|\ell_e(x)|=
S(x)=\|x\|\,S\!\left(\frac{x}{\|x\|}\right)
\ge c\|x\|.
\end{equation} The inequality holds trivially when \(x=0\). 
Put $L(x)=\sum_{i<n}(\Re s_i)(x_i^{(n)}-x_{i+1}^{(n)})$.
The absolute value of the integrand \eqref{pub:network-integral} is then
$|P(x)|e^{-\pi\mathcal V(x)+L(x)}$.
For the special form $\mathcal A(x)^s$, take $s_i=is$.

For positive $\sigma_1,\ldots,\sigma_k$, set
$z_i=\frac12\min(\sigma_i,\sigma_{i+1})$ for $i<k$ and $z_0=z_k=0$.
Then
\begin{equation}\label{pub:positive-decomposition}
\sum_{i=1}^k\sigma_i\log(D_i^{(k)}E_i^{(k)})
 =\sum_{i=1}^k(\sigma_i-z_{i-1})\log D_i^{(k)}
     +\sum_{i=1}^k(\sigma_i-z_i)\log E_i^{(k)}+\sum_{i<k}z_i\log(E_i^{(k)}D_{i+1}^{(k)}).
\end{equation}
Notice that both $\sigma_i-z_{i-1}$ and $\sigma_i-z_{i}$ are strictly positive.
Set $\sigma_i=\Re s_i$, starting with rank $k=n-1$, repeat the same decomposition on the last sum, and continuing
to $k=1$, it expresses $L(x)$ the following form
\[
L(x)=\sum_e\beta_e\ell_e(x),\qquad
\beta_e=
\begin{cases}
\sigma_i-z_{i-1},&\ell_e=\ell^D_{k,i},\\
\sigma_i-z_i,&\ell_e=\ell^E_{k,i}.
\end{cases}
\] Thus $\beta_e>0$ for all $e$. Choose $0<b<\pi$ and $0<\delta<\min_e\beta_e$.
For each $e$, the function
$-be^{2u}+\beta_eu+\delta|u|$ tends to $-\infty$
as $u\to\pm\infty$, so it is bounded above.
Apply this bound with $u=\ell_e(x)$ and sum over $e$.
By \eqref{pub:linear-form-norm}, there exist positive constants $C$ and $c'$ such that
\begin{equation}\label{pub:Gaussian-domination}
 e^{-b\mathcal V(x)+L(x)}
 \le C e^{-\delta\sum_e|\ell_e(x)|}
 \le C e^{-c'\|x\|}.
\end{equation}
Each entry of $\mathcal B_k$ is at most $\sqrt{\mathcal V(x)}$.
Thus a polynomial $P$ of degree $d$ satisfies
$|P(x)|\le C_P(1+\mathcal V(x))^{d/2}$.
Since
$(1+\mathcal V)^{d/2}e^{-(\pi-b)\mathcal V}$ is bounded,
the absolute value of the integrand is bounded by $Ce^{-c'\|x\|}$,
which is integrable.
Derivatives with respect to \(s_1,\ldots,s_{n-1}\) introduce only polynomial factors
in $x$, so they are integrable as well.
Our choices make the positive coefficients $\beta_e$
continuous in $(\Re s_i)$; hence these bounds are uniform
on compact subsets of $\{\Re s_i>0\}$. The last assertion of the lemma is a direct consequence of differentiation under the integral.
\end{proof}

As before, let $\Lambda_j$ be the wedge product of the ordered rows of $X_j$.
Define $P_\epsilon$ as in
\eqref{read:lowest-polynomials} using these $\Lambda_j$'s via \eqref{read:Gram} and \eqref{read:B-definition}. That is,
$P_\epsilon(x):=\left.P_{n,\epsilon}\right|_{\Lambda_j=\Lambda_j(x),\,1\le j<n}$, where $P_{n,\epsilon}$ is given by \eqref{read:lowest-polynomials}.
Each entry of $X_j=\mathcal B_j\cdots\mathcal B_{n-1}$ is a sum of products containing one entry from each factor, thus it is
homogeneous of degree $n-j$. Using the standard dual basis $e_1^*,\cdots, e_n^*$ of $V^*$, write
\[
\Lambda_j
=\sum_{i_1<\cdots<i_j}
\det\!\left((X_j)_{a,i_b}\right)_{1\le a,b\le j}
\,e_{i_1}^*\wedge\cdots\wedge e_{i_j}^*,
\] $\det\!\left((X_j)_{a,i_b}\right)$ are $j\times j$ minors of $X_j$,
so they have degree $j(n-j)$.
The factor $\mathcal G_j^{r_j}$ has degree $2r_j$ in $\Lambda_j$.
For $\epsilon=1$, $B_n$ contributes degree $2$ and
$\mathcal G_j^{r_j-1}$ contributes $2(r_j-1)$.
Thus, for both parities,
\begin{equation}\label{pub:network-degree}
 \deg P_\epsilon=2\sum_{j<n}j(n-j)r_j=2\Omega.
\end{equation}
The last equality holds because $r_j$ occurs in $b_{ik}$
exactly when $i\le j<k$, in total there are $j(n-j)$ of them.

\begin{prop}\label{upper:iteration}
For $\Re s>0$, we have
\begin{equation}\label{upper:period-integral}
 M_{n,\epsilon}(s)=K_{n,\epsilon}I_{P_\epsilon,n}(s),
 \quad \text{where} \quad
 K_{n,\epsilon}=
 \frac{(-1)^{N\epsilon}2^N\pi^{N+\Omega}}
      {\prod_{i<j}\Gamma(1+b_{ij})}.
\end{equation}
\end{prop}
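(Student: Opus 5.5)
We iterate the recurrence \eqref{read:recursion} down the ranks and then substitute into the Mellin integral \eqref{read:M}. Set $a^{(n)}=\operatorname{diag}(x_1\cdots x_{n-1},\ldots,x_{n-1},1)$. Apply Proposition~\ref{read:Jacquet-recursion} with $a=a^{(n)}$ and $t=a^{(n-1)}$. This gives $F_{d,v_{n,\epsilon}}(a^{(n)})$ as $(-1)^{(n-1)\epsilon}\kappa_n$ times an integral of $F_{(d_1,\ldots,d_{n-1}),v^{(n-1)}}(a^{(n-1)})\,e^{-\pi\|\mathcal B_{n-1}\|^2}$, where $v^{(n-1)}=D_{n-1}\mathbb S_{\eta^{[n]}}(\mathcal B_{n-1})v_{n,\epsilon}\in V_{n-1}$.

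The lower-rank data have the same shape. The weights $d_1,\ldots,d_{n-1}$ keep the common parity $\epsilon$, and their $b_{ij}$ for $i<j\le n-1$ are unchanged. So we apply Proposition~\ref{read:Jacquet-recursion} again in rank $n-1$ with the vector $v^{(n-1)}$, and so on down to rank one, where $F_{(d_1),w}=w\in\mathbb C$. The intermediate vectors depend on the outer variables, but this is harmless because the recurrence is linear in $v$ and valid for every $v\in V_k$.

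After $n-1$ steps we should get
\[
F_{d,v}(a^{(n)})=\pm\prod_{k=2}^n\kappa_k\int \Bigl(D_1\mathbb S_{\eta^{[2]}}(\mathcal B_1)D_2\cdots D_{n-1}\mathbb S_{\eta^{[n]}}(\mathcal B_{n-1})v\Bigr)\,e^{-\pi\sum_k\|\mathcal B_k\|^2}\prod_{k<n}d^\times a^{(k)} .
\]
The sign is $(-1)^{\epsilon\sum_{k=2}^n(k-1)}=(-1)^{N\epsilon}$. The constants multiply to $\prod_k\kappa_k=2^{\sum(k-1)}\pi^{\sum(k-1)+\sum_{i<k}b_{ik}}/\prod_{i<k}\Gamma(1+b_{ik})=2^N\pi^{N+\Omega}/\prod_{i<j}\Gamma(1+b_{ij})$, which is exactly $K_{n,\epsilon}$.

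It remains to identify the scalar integrand with $P_\epsilon(x)$, and this is the main obstacle. The $D_k$ strip determinant lines, using the unitary identity \eqref{pub:determinant-line}. The composition is then the matrix coefficient $\langle\mathbb S_\eta(\mathcal B_1\cdots\mathcal B_{n-1})\,v,\ \text{highest vector}\rangle$ in the tensor model. I would realize each $D_k$ as pairing off the top wedge factor against $\omega_k$ inside $\mathcal T_\eta$. Then functoriality $\mathbb S_\lambda(AB)=\mathbb S_\lambda(A)\mathbb S_\lambda(B)$ on $\mathcal T_\lambda$ should collapse the whole chain into $\langle\mathcal T_\eta(X_1)v,v^+\rangle$-type expressions. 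Here the successive products $X_j=\mathcal B_j\cdots\mathcal B_{n-1}$ supply the rows whose wedges $\Lambda_j$ appear. By \eqref{cartan:coefficient} and Proposition~\ref{read:polynomial-model}, the result is $P_{v}$ evaluated with $\Lambda_j=\Lambda_j(X_j)$, that is, $P_\epsilon(x)$ for $v=v_{n,\epsilon}$.

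I must check carefully that the $j$-th wedge factor in the tensor realization gets hit precisely by $X_j$ and not by $X_1$. This amounts to the fact that $D_{k}$ kills all factors $E_j$ with $j\ge k$ except those along $\omega_k$, so factor $E_j$ passes through only $\mathcal B_j,\ldots,\mathcal B_{n-1}$ before being evaluated against $e_1\wedge\cdots\wedge e_j$.

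Finally we substitute into \eqref{read:M} and change variables. We have $x_i=a_i^{(n)}/a_{i+1}^{(n)}$, and $\prod_i x_i^{is}=\prod_{i<n}(a_i^{(n)})^s=\mathcal A^s$ since $a_n^{(n)}=1$. The measures $\prod dx_i/x_i$ and $\prod_k d^\times a^{(k)}$ together become $dx$ on $\mathbb R^D$ in logarithmic coordinates, with the triangular change having Jacobian one. This yields $K_{n,\epsilon}I_{P_\epsilon,n}(s)$. The interchange of the outer Mellin integral with the nested inner ones is justified by Fubini, using the absolute convergence of $I_{|P|,n}$ from Lemma~\ref{pub:convergence} for $\Re s>0$. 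The polynomial $P$ of Lemma~\ref{pub:convergence} here dominates all iterated integrands, since each $|v^{(k)}|$ is bounded by a polynomial in the $\mathcal B$ entries.
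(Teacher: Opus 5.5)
Your proposal is correct and follows the paper's proof essentially step for step: iterate \eqref{read:recursion} down to rank one, collect the signs $(-1)^{(k-1)\epsilon}$ and the $\kappa_k$ into $K_{n,\epsilon}$, identify $D_1\mathbb S_{\eta^{[2]}}(\mathcal B_1)\cdots D_{n-1}\mathbb S_{\eta^{[n]}}(\mathcal B_{n-1})v$ as the contraction of each $\bigwedge^j$ factor with $\Lambda_j(X_j)$, pass to logarithmic coordinates, and justify Fubini by Lemma~\ref{pub:convergence}. The one step the paper makes explicit and you leave implicit is that $X_j=\mathcal B_jX_{j+1}$ has rank $j$ and nested row spaces, so $(\Lambda_j(X_j))_{j<n}$ is the row flag of a single $X\in\mathrm{GL}_n(\mathbb C)$; this is what lets you replace the contraction applied to $v_{n,\epsilon}$ by $P_\epsilon(x)$, since $v_{n,\epsilon}$ is pinned down only through $P_v(X)$ on matrices (and you should drop the interim formula with $\mathbb S_\eta(\mathcal B_1\cdots\mathcal B_{n-1})$, which for $n\ge3$ lands in $\mathbb S_\eta(\mathbb C^1)=0$).
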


\begin{proof}
In the tensor realization of $V_n$, the composite of the maps
$D_j\mathbb S_{\eta^{[j+1]}}(\mathcal B_j)$ contracts each
$\bigwedge^j\mathbb C^n$ factor with the row wedge $\Lambda_j$ of
$X_j=\mathcal B_j\cdots\mathcal B_{n-1}$. Indeed,
$\bigwedge^jX_j$ sends $\xi$ to
$\Lambda_j(\xi)(e_1\wedge\cdots\wedge e_j)$, and $D_j$ removes
the resulting unit determinant factor
$(e_1\wedge\cdots\wedge e_j)^{\otimes2r_j}$. Hence, for $v\in V_n$,
\[
 (D_1\mathbb S_{\eta^{[2]}}(\mathcal B_1))\cdots
 (D_{n-1}\mathbb S_{\eta^{[n]}}(\mathcal B_{n-1}))v
 =\left(\bigotimes_{j<n}\Lambda_j^{\otimes2r_j}\right)(v)=P_v(x).
\]
Since $X_j=\mathcal B_jX_{j+1}$, 
$\operatorname{row}(X_j)\subseteq\operatorname{row}(X_{j+1})$. Hence there exists $X\in\mathrm{GL}_n(\mathbb C)$ such that the wedge of its first $j$ rows equals $\Lambda_j$ for every $1\le j<n$.
By \eqref{cartan:coefficient} and \eqref{read:lowest-polynomials},
$P_{v_{n,\epsilon}}(x)=P_\epsilon(x)$.

The polynomial $P_v$ is linear in $v$ and homogeneous of
degree $2\Omega$, so $
 |P_v(x)|\le C\|v\|\mathcal V(x)^\Omega$ for some $C>0$.
For $\sigma=\Re s>0$, Lemma~\ref{pub:convergence} gives
$I_{\mathcal V^\Omega,n}(\sigma)<\infty$. Its estimate \eqref{pub:Gaussian-domination}, after absorbing the
polynomial factor into the Gaussian, also gives integrability in
the variables $x_i^{(k)}=\log a_i^{(k)}$,
$1\le i\le k<n$, for every fixed
$(a_1^{(n)},\ldots,a_{n-1}^{(n)})$.
Thus iteration of \eqref{read:recursion} and Fubini's theorem
are justified.
The constants in the formula \eqref{read:recursion} multiply to
\[
 \prod_{k=1}^{n-1}(-1)^{k\epsilon}\kappa_{k+1}
 =\frac{(-1)^{N\epsilon}2^N\pi^{N+\Omega}}
        {\prod_{i<j}\Gamma(1+b_{ij})}
 =K_{n,\epsilon}.
\]
Setting $y_i=a_i^{(n)}/a_{i+1}^{(n)}$, we have
$\prod_{i<n}y_i^{is}=\mathcal A^s$, $
 \prod_{i<n}\frac{dy_i}{y_i}=\prod_{i<n}dx_i^{(n)}$. Hence
\[
 M_{n,\epsilon}(s)
 =K_{n,\epsilon}\int_{\mathbb R^D}
      P_\epsilon(x)e^{-\pi\mathcal V(x)}\mathcal A(x)^s\,dx
 =K_{n,\epsilon}I_{P_\epsilon,n}(s),
\]
which proves \eqref{upper:period-integral}.
\end{proof}

\subsection{A bound by the integral with \texorpdfstring{$P=1$}{P=1}}
Let $\mathbf v=(v_i^k)_{1\le i\le k\le n}$ with $v_i^{(k)}=n+k-2i$,
excluding $(k,i)=(n,n)$, and put
$W=\{y:y_{n-1}^{(n-1)}=0\}$. Then every $x$ has a unique expression
$x=y+t\mathbf v$, with $y\in W$; the Jacobian of the change of variable $x\mapsto (y,t)$ is 1 because
$v_{n-1}^{(n-1)}=1$. We also have $
\mathcal B_k(y+t\mathbf v)=e^t\mathcal B_k(y)$, for $1\le k<n $.
Consequently, if $P$ is homogeneous of degree $2\omega$, then
\begin{equation}\label{pub:common-scale}
 \mathcal V(y+t\mathbf v)=e^{2t}\mathcal V(y),\quad
 \mathcal A(y+t\mathbf v)=e^{2Nt}\mathcal A(y),\quad
 P(y+t\mathbf v)=e^{2\omega t}P(y).
\end{equation}
We will use $\omega=\Omega$ for $P=P_\epsilon$, and $\omega=0$
for $P=1$.

\begin{lem}\label{pub:scale-disintegration}
For a homogeneous polynomial $P$ of degree $2\omega\ge0$
and $\Re s>0$,
\begin{equation}\label{pub:scale-formula}
 I_{P,n}(s)=\frac{\Gamma(Ns+\omega)}{2\pi^{Ns+\omega}}
 \int_W\frac{P(y)\mathcal A(y)^s}
                 {\mathcal V(y)^{Ns+\omega}}\,dy.
\end{equation}
This integral is absolutely convergent. In particular, with
$\sigma=\Re s$,
\begin{equation}\label{pub:scale-bound}
 |I_{P,n}(s)|\le C_P\pi^{-\omega}
 \frac{|\Gamma(Ns+\omega)|}{\Gamma(N\sigma)}I_{1,n}(\sigma).
\end{equation}
\end{lem}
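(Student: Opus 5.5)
The plan is to disintegrate $I_{P,n}(s)$ along the common scaling direction $\mathbf v$ and integrate out the scale variable explicitly. By Lemma~\ref{pub:convergence} the integrand $P(x)e^{-\pi\mathcal V(x)}\mathcal A(x)^s$ is absolutely integrable on $\mathbb R^D$ for $\Re s>0$. The change of variables $x=y+t\mathbf v$ with $y\in W$, $t\in\mathbb R$ has Jacobian one, so Tonelli and Fubini allow us to write
\[
 I_{P,n}(s)=\int_W P(y)\mathcal A(y)^s\int_{\mathbb R}e^{2(Ns+\omega)t}e^{-\pi e^{2t}\mathcal V(y)}\,dt\,dy,
\]
using \eqref{pub:common-scale}. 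Note that $\mathcal V(y)>0$ everywhere, since it is a sum of exponentials. In the inner integral put $u=\pi e^{2t}\mathcal V(y)$, so that $dt=du/(2u)$. It becomes
\[
 \frac{1}{2}(\pi\mathcal V(y))^{-(Ns+\omega)}\Gamma(Ns+\omega),
\]
which is valid because $\Re(Ns+\omega)>0$. This gives \eqref{pub:scale-formula}.

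Absolute convergence of the $W$-integral follows by running the same computation with $s$ replaced by $\sigma$ and $P$ replaced by $|P|$. Since $|P|$ is still homogeneous of degree $2\omega$ under the scaling, Tonelli turns $\int_{\mathbb R^D}|P|e^{-\pi\mathcal V}\mathcal A^\sigma\,dx<\infty$ into the finiteness of
\[
 \frac{\Gamma(N\sigma+\omega)}{2\pi^{N\sigma+\omega}}\int_W |P(y)|\mathcal A(y)^\sigma\mathcal V(y)^{-N\sigma-\omega}\,dy.
\]
Here $|\mathcal A(y)^s|=\mathcal A(y)^\sigma$.

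For the bound \eqref{pub:scale-bound}, use the homogeneity estimate $|P(y)|\le C_P\mathcal V(y)^{\omega}$. This holds because each entry of every $\mathcal B_k$ is at most $\sqrt{\mathcal V}$ and $P$ is homogeneous of degree $2\omega$ in those entries, exactly as in the proof of Proposition~\ref{upper:iteration}. Then
\[
 |I_{P,n}(s)|\le C_P\frac{|\Gamma(Ns+\omega)|}{2\pi^{N\sigma+\omega}}\int_W\frac{\mathcal A(y)^\sigma}{\mathcal V(y)^{N\sigma}}\,dy.
\]
Applying \eqref{pub:scale-formula} with $P=1$, $\omega=0$ and real argument $\sigma$ identifies the last integral with $2\pi^{N\sigma}I_{1,n}(\sigma)/\Gamma(N\sigma)$. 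Substituting, the powers $\pi^{N\sigma}$ cancel and we obtain exactly \eqref{pub:scale-bound}.

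The only delicate point is keeping the $\pi$-powers straight. The bound $|\pi^{-(Ns+\omega)}|=\pi^{-N\sigma-\omega}$ is what makes the cancellation against the $P=1$ case exact. Everything else is a direct consequence of Lemma~\ref{pub:convergence} and the scaling identities \eqref{pub:common-scale}.
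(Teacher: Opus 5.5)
Your proof is correct and follows the paper's argument essentially step for step. You make the same disintegration $x=y+t\mathbf v$ and evaluate the Gamma integral in $t$. You obtain absolute convergence of the $W$-integral by repeating the computation with $|P|$ at $\sigma=\Re s$. Finally, you combine the homogeneity bound $|P|\le C_P\mathcal V^{\omega}$ with the $P=1$ case of \eqref{pub:scale-formula} at $\sigma$ to get \eqref{pub:scale-bound}.
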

\begin{proof}
Lemma~\ref{pub:convergence} permits the change $x=y+t\mathbf v$
and integration in $t$. With $u=\pi\mathcal V(y)e^{2t}$,
\[
 \int_{\mathbb R}e^{-\pi\mathcal V(y)e^{2t}}e^{2(Ns+\omega)t}\,dt
 =\frac{\Gamma(Ns+\omega)}{2(\pi\mathcal V(y))^{Ns+\omega}},
\]
which proves \eqref{pub:scale-formula}. Applying the same calculation
to the absolute value at $\sigma=\Re s$ gives
\[
 \int_{\mathbb R^D}|P(x)|e^{-\pi\mathcal V(x)}\mathcal A(x)^\sigma\,dx
 =\frac{\Gamma(N\sigma+\omega)}{2\pi^{N\sigma+\omega}}
 \int_W\frac{|P(y)|\mathcal A(y)^\sigma}
 {\mathcal V(y)^{N\sigma+\omega}}\,dy<\infty.
\]
This also proves absolute convergence of the integral over $W$.
Homogeneity gives $|P(y)|\le C_P\mathcal V(y)^\omega$;
comparison with \eqref{pub:scale-formula} for $P=1$ at $\sigma$
now yields \eqref{pub:scale-bound}.
\end{proof}

\subsection{Evaluation of the integral with \texorpdfstring{$P=1$}{P=1}}
\label{pub:spherical-calibration-section}
Let $U_1=1$ and, for $a=\mathrm{diag}(a_1,\cdots, a_{k+1})\in A_{k+1}(\mathbb{R})^+$, define
\begin{equation}\label{pub:spherical-Gaussian-rec}
 U_{k+1}(a)=\int_{(0,\infty)^k}U_k(t)
 e^{-\pi\sum_{i=1}^k((a_i/t_i)^2+(t_i/a_{i+1})^2)}\prod_{i=1}^k\frac{dt_i}{t_i}.
\end{equation}
These functions are positive and satisfy $U_k(ca)=U_k(a)$
for every $c>0$. Put $\mathcal M_1=1$ and
\[
 \mathcal M_k(\mathbf s)=
 \int_{(0,\infty)^{k-1}}U_k(a(y))
          \prod_{i<k}y_i^{s_i}\frac{dy_i}{y_i},\qquad
 a(y)=\operatorname{diag}(y_1\cdots y_{k-1},\ldots,y_{k-1},1).
\]
The integrals converge when $\Re s_i>0$ by
Lemma~\ref{pub:convergence}. 
All contours $(\mathbf z)$ below have the convention of
Section~\ref{read:notations}.

\begin{lem}\label{pub:spherical-recurrence}Set
$g(u)=\frac12\pi^{-u/2}\Gamma(u/2)$.
If $\Re s_i>0$ and
$0<c_i<\min(\Re s_i,\Re s_{i+1})$ for $1\le i<k$, then
\begin{equation}\label{pub:spherical-Mellin-rec}
 \mathcal M_{k+1}(\mathbf s)=
 \int_{(\mathbf z)}\mathcal M_k(\mathbf z)
 \prod_{i=1}^k g(s_i-z_{i-1})g(s_i-z_i)
                         \prod_{i<k}\frac{dz_i}{2\pi i},
 \qquad z_0=z_k=0,\quad\Re z_i=c_i.
\end{equation}
The contour integral converges absolutely.
\end{lem}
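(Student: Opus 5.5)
The plan is to substitute the defining recurrence \eqref{pub:spherical-Gaussian-rec} for $U_{k+1}$ into the definition of $\mathcal M_{k+1}(\mathbf s)$. The inner Gaussian factors will be separated by one-variable Mellin transforms, and the resulting torus integral will be recognized as $\mathcal M_k$ on shifted contours. Concretely, write $a=a(y)$ with $a_{k+1}=1$, so that $y_i=a_i/a_{i+1}$. Then
\[
\prod_{i\le k}y_i^{s_i}=\prod_{i\le k}\Bigl(\frac{a_i}{t_i}\Bigr)^{s_i}\Bigl(\frac{t_i}{a_{i+1}}\Bigr)^{s_i}\prod_{i\le k}\Bigl(\frac{t_{i+1}}{t_i}\Bigr)^{s_i}\cdot(\text{boundary terms}).
\]
It is cleaner, however, to change variables directly: set $p_i=a_i/t_i$ and $q_i=t_i/a_{i+1}$. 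Then $y_i=p_iq_i$, and the ratios $t_i/t_{i+1}=q_ip_{i+1}$ for $i<k$ are exactly the arguments $y'_i$ of $U_k(t)$, using $U_k(ct)=U_k(t)$.

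The key algebraic step is the decomposition already used in \eqref{pub:positive-decomposition}. For the integrand $U_k(t)\,e^{-\pi\sum(p_i^2+q_i^2)}\prod y_i^{s_i}$, the function $U_k$ depends only on the products $q_ip_{i+1}$, while the Gaussian factors split. I will first apply Fubini, which is legitimate for $\Re s_i>0$ by Lemma~\ref{pub:convergence}; this gives absolute convergence of the $2k$-dimensional integral in $(t,y)$, or equivalently in the logarithmic coordinates. Next I will insert the Mellin inversion $U_k(t)=\int_{(\mathbf z)}\mathcal M_k(\mathbf z)\prod_{i<k}(t_i/t_{i+1})^{-z_i}\prod dz_i/(2\pi i)$ with $\Re z_i=c_i>0$. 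This inversion requires $\mathcal M_k$ to be absolutely integrable on vertical lines. I plan to get that by induction on $k$: the formula \eqref{pub:spherical-Mellin-rec} at rank $k$ together with Stirling's exponential decay of the $g$-factors gives $\mathcal M_k(\mathbf c+i\mathbf t)$ decay of the form $e^{-\delta\|\mathbf t\|}$. The base case $\mathcal M_1=1$ is trivial, and for $k=1$ the statement reduces to $\mathcal M_2(s_1)=g(s_1)^2$, which is a direct computation.

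After the inversion, the exponent of each $p_i$ is $s_i-z_{i-1}$ and that of each $q_i$ is $s_i-z_i$. Each one-variable integral is then $\int_0^\infty x^{u}e^{-\pi x^2}dx/x=g(u)$. These integrals converge precisely because $c_i<\min(\Re s_i,\Re s_{i+1})$, which makes both $s_i-z_i$ and $s_{i+1}-z_i$ have positive real part. The Jacobian of $(t,y)\mapsto(p,q)$ in logarithmic coordinates is $1$, once the scaling redundancy $t\mapsto ct$ is accounted for by the invariance of $U_k$; I expect this bookkeeping to come out as the product of $2k$ factors $g$ with no extra constant.

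I expect the main obstacle to be justifying the interchange of the contour integral with the $p,q$ integrals, that is, absolute convergence of the combined integral. Because $\Re(s_i-z_{i-1})$ and $\Re(s_i-z_i)$ stay fixed and positive, the $p,q$ integrals are bounded uniformly in $\Im z$. Absolute convergence therefore reduces to integrability of $|\mathcal M_k|$ on the contour. I will establish this inductively from the Stirling decay of the $g$-factors, noting that $|g(\sigma+i\tau)|\ll e^{-\pi|\tau|/4}|\tau|^{(\sigma-1)/2}$. In the rank-$k$ formula, each $z_i$ appears in $g(s_i-z_i)g(s_{i+1}-z_i)$, which also yields the asserted absolute convergence of the contour integral in \eqref{pub:spherical-Mellin-rec}.
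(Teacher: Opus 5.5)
Your proposal is correct, and its skeleton matches the paper's proof. You substitute \eqref{pub:spherical-Gaussian-rec}, insert Mellin inversion for $U_k$, and change variables to $D_i=a_i/t_i$, $E_i=t_i/a_{i+1}$ (your $p_i,q_i$). Then $y_i=D_iE_i$ and $t_i/t_{i+1}=E_iD_{i+1}$, with unit logarithmic Jacobian. The interchange is justified by Tonelli, which reduces everything to integrability of $|\mathcal M_k|$ on the line $\Re\mathbf z=\mathbf c$. A small correction: no scaling redundancy enters the Jacobian. The map from $(\log t,\log y)$ to $(\log D,\log E)$ is already unimodular triangular. The invariance $U_k(ct)=U_k(t)$ is only used to write $U_k(t)$ as a function of the ratios $t_i/t_{i+1}$.

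The genuine difference is how you get integrability of $\mathcal M_k$ on vertical lines.

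\emph{The paper's route.} It stays at a fixed rank. It sets $f(q)=e^{\mathbf c\cdot q}U_k(a(e^q))$ and uses \eqref{pub:Gaussian-domination} to show that all derivatives of $f$ are integrable. Integration by parts then gives $|\mathcal M_k(\mathbf c+i\boldsymbol\tau)|\ll_m(1+\|\boldsymbol\tau\|^2)^{-m}$. This needs no induction and no information about $g$.

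\emph{Your route.} You induct on $k$. You feed in the lower-rank case of the identity, the bound $|\mathcal M_{k-1}(\mathbf w)|\le\mathcal M_{k-1}(\Re\mathbf w)$ (from positivity of $U_{k-1}$), and Stirling's bound for $g$. This works and even gives exponential decay, but you skipped one step that deserves a line. After bounding $|\mathcal M_{k-1}|$ by a constant, you must show that the integral over $\omega=\Im\mathbf w$ of $\prod_{i=1}^{k-1}e^{-\frac\pi4(|\tau_i-\omega_{i-1}|+|\tau_i-\omega_i|)}$, with $\omega_0=\omega_{k-1}=0$, decays in $\boldsymbol\tau$. Integrating out each $\omega_j$ leaves at most a polynomial times $e^{-\frac\pi4|\tau_j-\tau_{j+1}|}$. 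Since the chain is pinned to $0$ at both ends, $|\tau_1|+\sum_j|\tau_j-\tau_{j+1}|+|\tau_{k-1}|\ge2\max_i|\tau_i|$, which gives the exponential decay.

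So your argument is a self-contained induction using the lemma itself, with Stirling plus this chaining estimate replacing the paper's regularity argument. The paper's argument uses only the Gaussian domination \eqref{pub:Gaussian-domination} and does not depend on the explicit gamma factors.
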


\begin{proof}
For $k\ge2$, put $q_i=\log y_i$ and
$f(q)=e^{\mathbf c\cdot q}U_k(a(e^q))$.
Iteration of \eqref{pub:spherical-Gaussian-rec} expresses $f$
as a Gaussian integral. Differentiating the integrand defining $f$ in logarithmic
coordinates introduces only polynomials in the matrix entries, so \eqref{pub:Gaussian-domination} shows
that $f$ and all its derivatives are integrable. Put
$\Delta_q=\sum_{i<k}\partial_{q_i}^2$.
Integration by parts, justified first with a compactly supported
cutoff and then by $L^1$ convergence, gives
\begin{equation}\label{eq:rapid-decay-Mk}
    |\mathcal M_k(\mathbf c+i\boldsymbol\tau)|
 \le \|(1-\Delta_q)^m f\|_1(1+\|\boldsymbol\tau\|^2)^{-m}
 \qquad(m\ge0).
\end{equation}
Next, for \(t=\operatorname{diag}(t_1,\ldots,t_k)\), set \(y_i=t_i/t_{i+1}\), then \(a(y)=t_k^{-1}t\). Thus \(U_k(a(y))=U_k(t)\), together with
Mellin inversion formula, this gives
\begin{equation}\label{eq:Mellin-inversion}
 U_k(t)=\int_{(\mathbf z)}\mathcal M_k(\mathbf z)
 \prod_{i<k}(t_i/t_{i+1})^{-z_i}
 \prod_{i<k}\frac{dz_i}{2\pi i}.
\end{equation}
For $k=1$ the formula holds with no integration.

Insert \eqref{eq:Mellin-inversion} into the definition of
$\mathcal M_{k+1}$ and its Gaussian recurrence. With $a=a(y)$, set
$D_i=a_i/t_i$, $E_i=t_i/a_{i+1}$. Then
$y_i=D_iE_i$, $t_i/t_{i+1}=E_iD_{i+1}$, and the logarithmic
Jacobian is one. With $z_0=z_k=0$, the integral becomes
\[
 \int_{(\mathbf z)}\mathcal M_k(\mathbf z)
 \int_{(0,\infty)^{2k}}
 \prod_{i=1}^k D_i^{s_i-z_{i-1}}E_i^{s_i-z_i}
 e^{-\pi(D_i^2+E_i^2)}
 \frac{dD_i}{D_i}\frac{dE_i}{E_i}
 \prod_{i<k}\frac{dz_i}{2\pi i}.
\]
To justify the interchange, put $c_0=c_k=0$.
Tonelli's theorem and the Gaussian Mellin integral give the absolute
integral
\[
 \prod_{i=1}^k g(\Re s_i-c_{i-1})g(\Re s_i-c_i)
 \int_{\mathbb R^{k-1}}|\mathcal M_k(\mathbf c+i\boldsymbol\tau)|
             \frac{d\boldsymbol\tau}{(2\pi)^{k-1}}<\infty.
\]
Here the contour inequalities give positive real parts for the Mellin exponents,
and the last integral is finite by \eqref{eq:rapid-decay-Mk}.
Applying Fubini's theorem and integrating in $D_i,E_i$ proves
\eqref{pub:spherical-Mellin-rec} and its absolute convergence.
\end{proof}

\begin{lem}\label{pub:Gaussian-convolution}
Suppose $\Re\sigma>0$, $\Re w_i>0$, and $w_0=0$.
On contours $(\mathbf z)$ with
$0<c_i<\min(\Re w_i,\Re w_{i-1}+2\Re\sigma)$,
\begin{equation}\label{pub:Gaussian-convolution-formula}
\int_{(\mathbf z)}\mathcal M_k(\mathbf z)
 \prod_{i<k}g(w_i-z_i)g(w_{i-1}+2\sigma-z_i)
\prod_{i<k}\frac{dz_i}{2\pi i}=2^{-(k-1)}\pi^{w_{k-1}/2-(k-1)\sigma}
 \frac{\Gamma(\sigma)^k}{\Gamma(w_{k-1}/2+\sigma)}\mathcal M_k(\mathbf w).
\end{equation}
Both sides are trivial for $k=1$.
\end{lem}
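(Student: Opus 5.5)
We prove \eqref{pub:Gaussian-convolution-formula} by induction on $k$; the case $k=1$ is trivial. Most of the work should reduce to the two Barnes lemmas, once the rank-$k$ object $\mathcal M_k$ is unfolded by Lemma~\ref{pub:spherical-recurrence}. For the inductive step from $k$ to $k+1$, insert \eqref{pub:spherical-Mellin-rec} for $\mathcal M_{k+1}(\mathbf z)$ into the left-hand side:
\[
 \mathcal M_{k+1}(\mathbf z)=\int_{(\mathbf u)}\mathcal M_k(\mathbf u)
 \prod_{i=1}^{k} g(z_i-u_{i-1})\,g(z_i-u_i)\prod_{i<k}\frac{du_i}{2\pi i},
 \qquad u_0=u_k=0 .
\]
The contours must be chosen with $0<\Re u_i<\min(\Re z_i,\Re z_{i+1})$ and compatible with the constraints on $\Re z_i$.

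The first step is to justify interchanging the $\mathbf u$- and $\mathbf z$-integrals. Here I would use Stirling's formula: every $g$-factor decays exponentially in the imaginary direction. I would combine this with the rapid decay \eqref{eq:rapid-decay-Mk} of $\mathcal M_k$ on vertical lines, arguing with Tonelli as in the proof of Lemma~\ref{pub:spherical-recurrence}.

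After the interchange, each $z_i$ appears in exactly four factors,
\[
 g(z_i-u_{i-1})\,g(z_i-u_i)\,g(w_i-z_i)\,g(w_{i-1}+2\sigma-z_i),
\]
so the $z$-integral factorizes into $k$ one-variable integrals. Each of these is Barnes' first lemma after the substitution $z_i=2\zeta$. With $g(u)=\frac12\pi^{-u/2}\Gamma(u/2)$, the $\pi$-powers are independent of $\zeta$, and the $i$-th integral gives
\[
 \frac{g(w_i-u_{i-1})\,g(w_i-u_i)\,g(w_{i-1}+2\sigma-u_{i-1})\,g(w_{i-1}+2\sigma-u_i)}{\Gamma\bigl((w_i+w_{i-1}+2\sigma-u_{i-1}-u_i)/2\bigr)}
\]
times an explicit power of $\pi$ and of $2$.

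The factors $\prod_i g(w_i-u_{i-1})g(w_i-u_i)$, integrated against $\mathcal M_k(\mathbf u)$, are exactly the recurrence \eqref{pub:spherical-Mellin-rec} producing $\mathcal M_{k+1}(\mathbf w)$. The remaining task is therefore to show that the extra factors
\[
 \prod_i \frac{g(w_{i-1}+2\sigma-u_{i-1})\,g(w_{i-1}+2\sigma-u_i)}{\Gamma\bigl((w_i+w_{i-1}+2\sigma-u_{i-1}-u_i)/2\bigr)}
\]
integrate out to the constant $2^{-k}\pi^{w_k/2-k\sigma}\Gamma(\sigma)^{k+1}/\Gamma(w_k/2+\sigma)$. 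This is the main obstacle, because the denominators couple consecutive $u_{i-1},u_i$.

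My first attempt would be to reorganize the integrand so that the inductive hypothesis applies at rank $k$ with shifted parameters $w'_i$ (built from the $w_i$ and $\sigma$). Failing a clean match, I would integrate the $u_i$ one at a time, starting from $u_{k-1}$, where one neighbour is $u_k=0$. That integral has the shape of Barnes' second lemma: five gamma factors over one, with the balancing condition supplied by the $2\sigma$ shifts. Eliminating $u_{k-1}$ should leave a product of the same type in $u_1,\ldots,u_{k-2}$, plus one extra factor $\Gamma(\sigma)$, and the process telescopes.

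If this direct route becomes unwieldy, I would instead pass to the spatial side. By Mellin--Parseval with \eqref{eq:Mellin-inversion}, the left-hand side equals an integral of $U_k(t)$ against an explicit product of Gaussians in $t_i/t_{i+1}$. Expanding $U_k$ by \eqref{pub:spherical-Gaussian-rec} and performing the new Gaussian integrals in the top variables, I would then aim to recognize the $\Gamma(\sigma)^k/\Gamma(w_{k-1}/2+\sigma)$ factor as a Beta integral in a common scaling variable, using $U_k(ca)=U_k(a)$. This is the Ishii--Stade convolution in the normalization of \cite[Proposition~6.1]{chen-cheng-theta}, which I would invoke as a fallback to fix the constants.
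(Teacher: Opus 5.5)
Your interchange of the $\mathbf u$- and $\mathbf z$-integrals and the use of Barnes' first lemma in each $z_i$ are fine. The proposal nevertheless has a genuine gap at exactly the step you call the main obstacle, and the reduction that leads to it is not valid.

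\textbf{The reduction step.} After the $z$-integrations, the extra factor
$E(\mathbf u)=\prod_{i=1}^{k}g(w_{i-1}+2\sigma-u_{i-1})\,g(w_{i-1}+2\sigma-u_i)\big/\Gamma\bigl((w_i+w_{i-1}+2\sigma-u_{i-1}-u_i)/2\bigr)$
depends on $\mathbf u$. It sits inside the same $\mathbf u$-integral as $\mathcal M_k(\mathbf u)\prod_i g(w_i-u_{i-1})g(w_i-u_i)$. So you cannot let the latter produce $\mathcal M_{k+1}(\mathbf w)$ and then ``integrate out'' $E$ on its own. What must be shown is that inserting $E(\mathbf u)$ into the recurrence integral \eqref{pub:spherical-Mellin-rec} for $\mathcal M_{k+1}(\mathbf w)$ multiplies it by a constant. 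That is a statement about the weight $\mathcal M_k$, not about gamma functions alone.

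\textbf{The telescoping step.} Eliminating one variable at a time with Barnes' second lemma works only for the step $k=2$. There $\mathcal M_2(u_1)=g(u_1)^2$ is explicit, and the $i=1$ denominator $\Gamma((w_1+2\sigma-u_1)/2)$ cancels against $g(w_1+2\sigma-u_1)$ from $i=2$, leaving a balanced integrand. For $k\ge3$ it fails:
\begin{itemize}
\item The $u_{k-1}$-integrand contains $\mathcal M_k(\mathbf u)$, which is not a gamma product in $u_{k-1}$.
\item If you unfold $\mathcal M_k$ by \eqref{pub:spherical-Mellin-rec} with new variables $\xi_1,\ldots,\xi_{k-2}$, you get $g(u_{k-1})g(u_{k-1}-\xi_{k-2})$ against four factors $g(\,\cdot\,-u_{k-1})$ and two uncancelled denominators. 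That is not Barnes' second-lemma shape.
\item Starting from $u_1$ instead gives $g(u_1)g(u_1-\xi_1)g(w_1-u_1)g(2\sigma-u_1)g(w_2-u_1)/\Gamma\bigl((w_1+w_2+2\sigma-u_1-u_2)/2\bigr)$. This is balanced only when $u_2=\xi_1$.
\end{itemize}
So nothing telescopes, and the shifted parameters $w_i'$ are never produced. Your spatial-side variant is likewise only sketched. The lemma is genuinely the Ishii--Stade convolution, not an iterate of the two Barnes lemmas.

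\textbf{What the fallback still needs.} The only complete route in your proposal is the fallback, and it is the paper's entire proof. But you omit the part that carries the content, namely the normalization dictionary.
\begin{itemize}
\item Put $\mathfrak M_k(\mathbf s)=2^{4\binom k2-\sum_is_i}\mathcal M_k(\mathbf s)$. Using $\Gamma_{\mathbb C}(u/2)=2^{2-u/2}g(u)$, \eqref{pub:spherical-Mellin-rec} becomes the recurrence \cite[(5.3)]{chen-cheng-theta} with $\Gamma_{\mathbb C}$-factors.
\item Since $\mathfrak M_2=\Gamma_{\mathbb C}(s/2)^2=16\,Mf_{(0,0)}$, induction gives $\mathfrak M_k=16\,Mf_{\mathbf 0}$.
\item \cite[Proposition~6.1]{chen-cheng-theta} with $\underline\mu=\mathbf 0$ then gives the constant $2^{2(k-1)}\Gamma_{\mathbb C}(\sigma)^k/\Gamma_{\mathbb C}(w_{k-1}/2+\sigma)$.
\item Converting back, the powers $2^{\sum z_i}$ cancel, and $\sum_{i<k}(w_i+w_{i-1})=2\sum_{i<k}w_i-w_{k-1}$ because $w_0=0$. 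This yields exactly $2^{-(k-1)}\pi^{w_{k-1}/2-(k-1)\sigma}\Gamma(\sigma)^k/\Gamma(w_{k-1}/2+\sigma)$.
\item Absolute convergence follows from $|\mathcal M_k(\mathbf c+i\mathbf t)|\le\mathcal M_k(\mathbf c)$, by positivity of $U_k$, together with Stirling decay of the two gamma factors in each $t_i$.
\end{itemize}
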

\begin{proof}
This is the Ishii--Stade
convolution \cite[Proposition~6.1]{chen-cheng-theta} with its parameter tuple $\underline{\mu}=(0,\ldots,0)\in\mathbb C^k$,
after converting its Mellin transform and gamma factors
to our normalization. It is immediate for $k=1$. For $k\ge2$, set
\[
 \mathfrak M_k(\mathbf s)
 =2^{4\binom k2-\sum_i s_i}\mathcal M_k(\mathbf s).
\]
Using $\Gamma_{\mathbb C}(u/2)=2^{2-u/2}g(u)$ in
\eqref{pub:spherical-Mellin-rec}, we have
\[
 \mathfrak M_{k+1}(\mathbf s)
 =\int_{(\mathbf z)}\mathfrak M_k(\mathbf z)
 \prod_{i=1}^k
 \Gamma_{\mathbb C}\!\left(\frac{s_i-z_{i-1}}2\right)
 \Gamma_{\mathbb C}\!\left(\frac{s_i-z_i}2\right)
 \prod_{i<k}\frac{dz_i}{2\pi i},
 \qquad z_0=z_k=0.
\]
In the notation of \cite[Section~5.3]{chen-cheng-theta}, $\mathfrak M_2(s)=\Gamma_{\mathbb C}(s/2)^2=16(Mf_{(0,0)})(s)$.
Since $\mathfrak M_k$ and $Mf_{\mathbf0}$ satisfy the same
recurrence \cite[(5.3)]{chen-cheng-theta}, by induction,
$\mathfrak M_k=16Mf_{\mathbf0}$ for every $k\ge2$,
where $\mathbf0\in\mathbb C^k$ means that all inducing
parameters are zero. Applying \cite[Proposition~6.1]{chen-cheng-theta}
with this tuple, and cancelling the common factor $16$, yields
\[
 \int_{(\mathbf z)}\mathfrak M_k(\mathbf z)
 \prod_{i<k}
 \Gamma_{\mathbb C}\!\left(\frac{w_i-z_i}2\right)
 \Gamma_{\mathbb C}\!\left(\frac{w_{i-1}+2\sigma-z_i}2\right)
 \prod_{i<k}\frac{dz_i}{2\pi i}=
 2^{2(k-1)}
 \frac{\Gamma_{\mathbb C}(\sigma)^k}
      {\Gamma_{\mathbb C}(w_{k-1}/2+\sigma)}
 \mathfrak M_k(\mathbf w).
\]
Substitute the definitions of $\mathfrak M_k$ and
$\Gamma_{\mathbb C}(u/2)$.
The powers involving $\sum_i z_i$ cancel. Since $w_0=0$,
$\sum_{i<k}(w_i+w_{i-1})=2\sum_{i<k}w_i-w_{k-1}$.
Thus the factor before $\mathcal M_k(\mathbf w)$ is
\[
 2^{-2(k-1)-w_{k-1}/2+(k-1)\sigma}
 \frac{\Gamma_{\mathbb C}(\sigma)^k}
      {\Gamma_{\mathbb C}(w_{k-1}/2+\sigma)}=
 2^{-(k-1)}\pi^{w_{k-1}/2-(k-1)\sigma}
 \frac{\Gamma(\sigma)^k}{\Gamma(w_{k-1}/2+\sigma)}.
\]
The contour $(\mathbf z)$ provides
$\Re(w_i-z_i)>0$ and
$\Re(w_{i-1}+2\sigma-z_i)>0$.
Moreover,
$|\mathcal M_k(\mathbf c+i\mathbf t)|\le\mathcal M_k(\mathbf c)$.
Stirling's formula gives exponential decay of the two gamma
factors in each $t_i$, proving the absolute convergence.
\end{proof}

\begin{prop}\label{pub:scalar-calibration}Set $I_{1,1}=1$.
For $\Re s>0$, and $n\ge 2$, we have
\begin{equation}\label{pub:I1-value}
 I_{1,n}(s)=2^{-D}\pi^{-Ns}G_0(s),\qquad
 G_0(s)=\frac{\Gamma(s/2)^n\Gamma(s)^N}{\Gamma(ns/2)}.
\end{equation} 
\end{prop}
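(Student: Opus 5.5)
The plan is to identify $I_{1,n}(s)$ with a special value of the multivariable Mellin transform $\mathcal M_n$, and then run an induction on the rank $k$. At each step, Lemma~\ref{pub:spherical-recurrence} and Lemma~\ref{pub:Gaussian-convolution} together collapse the contour integral.

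\emph{Step 1 (identification).} With $P=1$, Fubini (justified by Lemma~\ref{pub:convergence}) lets us integrate out the coordinates $x^{(k)}$ for $k<n$ first. Iterating \eqref{pub:spherical-Gaussian-rec} shows that this inner integral is exactly $U_n(a^{(n)})$. Put $y_i=a^{(n)}_i/a^{(n)}_{i+1}$, so that $a^{(n)}=a(y)$. Then $\mathcal A=\prod_{i<n}y_i^{i}$ and $\prod dx^{(n)}_i=\prod dy_i/y_i$. Hence
\[
 I_{1,n}(s)=\mathcal M_n(s,2s,\ldots,(n-1)s),\qquad \Re s>0.
\]
So it suffices to prove, for $k\ge1$ and $\mathbf s^{[k]}=(s,2s,\ldots,(k-1)s)$,
\[
 \mathcal M_{k+1}(\mathbf s^{[k+1]})=2^{-(k+1)}\pi^{-ks}\,
 \frac{\Gamma(s/2)\Gamma(ks/2)\Gamma(s)^k}{\Gamma((k+1)s/2)}\,
 \mathcal M_k(\mathbf s^{[k]}).
\]
Once this holds, telescoping from $\mathcal M_1=1$ gives the claim. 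The ratios $\Gamma(ks/2)/\Gamma((k+1)s/2)$ multiply to $\Gamma(s/2)/\Gamma(ns/2)$, which together with the remaining $\Gamma(s/2)^{n-1}$ yields $\Gamma(s/2)^n$. The factors $\Gamma(s)^k$ multiply to $\Gamma(s)^N$, the powers of $2$ to $2^{-\sum_{k<n}(k+1)}=2^{-D}$, and the powers of $\pi$ to $\pi^{-Ns}$. As a sanity check, for $n=2$ the formula reads $g(s)^2=\tfrac14\pi^{-s}\Gamma(s/2)^2$, which is the direct Gaussian computation.

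\emph{Step 2 (one rank step).} Apply Lemma~\ref{pub:spherical-recurrence} with $s_i=is$ and $0<c_i<is$. In the product $\prod_{i=1}^k g(s_i-z_{i-1})g(s_i-z_i)$, split off the two boundary factors $g(s_1-z_0)=g(s)$ and $g(s_k-z_k)=g(ks)$. Regroup the rest as $\prod_{i<k}g(s_i-z_i)g(s_{i+1}-z_i)$. This matches the kernel of Lemma~\ref{pub:Gaussian-convolution} with $w_i=is$ and $w_{i-1}+2\sigma=(i+1)s$, which forces $\sigma=s$; note that then $w_0=0$, as the lemma requires. The contour conditions $0<c_i<\min(is,(i+1)s)$ are compatible with those of Lemma~\ref{pub:spherical-recurrence}. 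First do this for real $s>0$, then extend by holomorphy in $s$.

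The lemma returns $\mathcal M_k(\mathbf s^{[k]})$ times
\[
 2^{-(k-1)}\pi^{(k-1)s/2-(k-1)s}\,\frac{\Gamma(s)^k}{\Gamma((k+1)s/2)} .
\]
Multiplying by $g(s)g(ks)=\tfrac14\pi^{-(k+1)s/2}\Gamma(s/2)\Gamma(ks/2)$ gives exactly the displayed ratio: the power of $\pi$ collapses to $\pi^{-ks}$ and the power of $2$ to $2^{-(k+1)}$.

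\emph{Main obstacle.} The delicate point is the bookkeeping that the Ishii--Stade convolution, applied with $\sigma=s$, lands again on the same arithmetic progression $\mathbf s^{[k]}$, so that the induction closes. Verifying the contour admissibility in both lemmas simultaneously is part of this. If a mismatch appears, the first thing to try is to recheck which of $g(s_i-z_{i-1})$ and $g(s_i-z_i)$ pairs with $w_{i-1}+2\sigma$. The analytic justifications (absolute convergence, Fubini) are already supplied by Lemmas~\ref{pub:convergence}, \ref{pub:spherical-recurrence} and \ref{pub:Gaussian-convolution}.
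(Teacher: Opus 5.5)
Your proposal is correct and matches the paper's proof: you identify $I_{1,n}(s)=\mathcal M_n(s,2s,\ldots,(n-1)s)$, apply Lemma~\ref{pub:spherical-recurrence} with $s_i=is$ and split off the boundary factors $g(s)g(ks)$, then apply Lemma~\ref{pub:Gaussian-convolution} with $w_i=is$, $\sigma=s$ to get the one-step recurrence, which you telescope from $I_{1,1}=1$. Your Fubini justification of the identification and your detour through real $s$ are harmless but not needed, since both lemmas already allow complex parameters; the paper simply takes the contours $c_i=i\Re s/2$.
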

\begin{proof}
By definition,
$I_{1,n}(s)=\mathcal M_n(s,2s,\ldots,(n-1)s)$.
In \eqref{pub:spherical-Mellin-rec}, take $s_i=is$.
The factors independent of the contour variables are
$g(s)g((n-1)s)$. Apply
Lemma~\ref{pub:Gaussian-convolution} to the remaining integral
in rank $n-1$, with $w_i=is$ and $\sigma=s$. This gives
\begin{equation}\label{pub:I1-rec}
 I_{1,n}(s)=2^{-n}\pi^{-(n-1)s}
 \frac{\Gamma(s/2)\Gamma((n-1)s/2)}{\Gamma(ns/2)}
                    \Gamma(s)^{n-1}I_{1,n-1}(s).
\end{equation}
For $n=2$, the same identity is $I_{1,2}(s)=g(s)^2$.
Iterating \eqref{pub:I1-rec} from $I_{1,1}=1$, the gamma ratios
cancel:
\[
 \prod_{k=2}^n
 \frac{\Gamma((k-1)s/2)}{\Gamma(ks/2)}
 =\frac{\Gamma(s/2)}{\Gamma(ns/2)}.
\]
Since $\sum_{k=2}^n k=D$ and $\sum_{k=2}^n(k-1)=N$, we obtain
\[
 I_{1,n}(s)
 =2^{-D}\pi^{-Ns}
 \frac{\Gamma(s/2)^n\Gamma(s)^N}{\Gamma(ns/2)},
\]
which proves \eqref{pub:I1-value}. The contours $(\mathbf z)$ with
$c_i=i\Re s/2$ satisfy all the required inequalities.
\end{proof}

\subsection{Proof of
Proposition~\ref{read:analytic-input} (2)}\label{pub:right-bound-section} Denote $\mathbf r=(r_1,\cdots, r_{n-1})$, with $r_i=\frac{d_i-d_{i+1}}{2}$ as in \eqref{eq:ri}. $b_{ij}=\frac{d_i-d_j}{2}=\sum_{k=i}^{j-1}r_k$ as in \eqref{eq:bij-N-s-alpha-l}.
Put
\[
 G_{\mathbf r,\epsilon}(s)=
 \frac{\Gamma((s+\epsilon)/2)^n}{\Gamma(n(s+\epsilon)/2)}\prod_{i<j}\Gamma(s+b_{ij}).
\]
By equations \eqref{read:T} and \eqref{upper:period-integral},
\[
 \frac{T_{n,\epsilon}(s)}{K_{n,\epsilon}}=2^{-D}\pi^{-Ns-\Omega}G_{\mathbf r,\epsilon}(s).
\]
Therefore \eqref{pub:scale-bound}, applied with
$P=P_\epsilon$ and $\omega=\Omega$, and
\eqref{pub:I1-value} imply
\begin{equation}\label{pub:Q-bound}
 |Q(s)|\le C_P
 \frac{|\Gamma(Ns+\Omega)|}{|G_{\mathbf r,\epsilon}(s)|}
 \frac{G_0(\sigma)}{\Gamma(N\sigma)},\qquad \sigma=\Re s>0.
\end{equation}
Put $a_0=N^Nn^{n/2}$, $\sigma=\Re s$, and recall that $\rho=(D-1)/2$. 
Define all complex powers of $s$ using
$\mathrm{Log}\,s=\log|s|+i\arg s$, where $-\pi/2<\arg s<\pi/2$. By
Stirling's formula, we obtain the following uniform estimates
as $|s|\to\infty$ in $\Re s\ge\tfrac12$: 
\[
 \begin{aligned}
 G_{\mathbf r,\epsilon}(s)
 &=C_1n^{-ns/2}e^{-Ns}s^{Ns+\Omega-D/2}
   \bigl(1+O(|s|^{-1})\bigr),\\
 \Gamma(Ns+\Omega)
 &=C_2N^{Ns}e^{-Ns}s^{Ns+\Omega-1/2}
   \bigl(1+O(|s|^{-1})\bigr),
 \end{aligned}
\]
where $C_1,C_2>0$ are independent of $s$. Indeed, the quotient
$\Gamma((s+\epsilon)/2)^n/\Gamma(n(s+\epsilon)/2)$ in $G_{\mathbf r,\epsilon}(s)$
contributes $n^{-ns/2}s^{-(n-1)/2}$, while the remaining
$N$ gamma factors contribute $e^{-Ns}s^{Ns+\Omega-N/2}$.
Here we used $\sum_{i<j}b_{ij}=\Omega$ and $D=N+n-1$.
Dividing the above estimations of $\Gamma(Ns+\Omega)$ by that of $G_{\mathbf r, \epsilon}(s)$, we obtain
\[
 \frac{\Gamma(Ns+\Omega)}{G_{\mathbf r,\epsilon}(s)}
 =C_0\bigl(N^Nn^{n/2}\bigr)^s s^{(D-1)/2}
   \bigl(1+O(|s|^{-1})\bigr)
 =C_0a_0^s s^\rho\bigl(1+O(|s|^{-1})\bigr),
 \qquad C_0=C_2/C_1>0.
\]
Consequently,
\begin{equation}\label{pub:uniform-Stirling}
 \frac{|\Gamma(Ns+\Omega)|}{|G_{\mathbf r,\epsilon}(s)|}
 \le C a_0^\sigma(1+|s|)^\rho,\qquad
 \frac{G_0(\sigma)}{\Gamma(N\sigma)}
 \le C a_0^{-\sigma}\sigma^{-\rho}
 \quad(\sigma\ge\tfrac12),
\end{equation} where the second inequality is obtained by applying the same calculation with $\epsilon=b_{ij}=0$
and taking the reciprocal on the positive real axis.
These bounds initially hold for sufficiently large $|s|$ and $\sigma$,
respectively. All gamma factors are finite and nonzero in
the stated domains, by continuity, we may enlarge $C$ so that both bounds hold
for all $\Re s\ge\tfrac12$ and all $\sigma\ge\tfrac12$,
respectively.
Substituting \eqref{pub:uniform-Stirling} into \eqref{pub:Q-bound} proves
Proposition~\ref{read:analytic-input} (2).

\begin{remark}The estimate with exponent $\rho$ relies on the precise structure
of the recurrence \eqref{read:recursion}. Its iteration determines
both the factor
$K_{n,\epsilon}=\prod_{k=1}^{n-1}(-1)^{k\epsilon}\kappa_{k+1}$
and, for our chosen test vector, the homogeneous polynomial of degree $2\Omega$.
This product fixes the exact normalization of
$T_{n,\epsilon}/K_{n,\epsilon}$ in \eqref{pub:Q-bound}.
Since $K_{n,\epsilon}$ is independent of $s$, its value affects
only the overall constant in the bound; its exact value is
essential for the limit in part (3). The degree $2\Omega$
produces $\Gamma(Ns+\Omega)$ through integration in the common
scaling variable. Its shift $\Omega$ matches the total shift
$\sum_{i<j}b_{ij}$ in $G_{\mathbf r,\epsilon}$, so the
weight-dependent powers cancel and leave precisely
$\rho=(D-1)/2$. Thus this exponent follows from \eqref{read:recursion} and the
homogeneity of the polynomial associated with our chosen test vector.
\end{remark}

\subsection{The minimum and the polynomial value}\label{pub:saddle-section}
Set $\mathcal F=\mathcal V-\log\mathcal A$.

\begin{prop}\label{pub:saddle-main}
The function $\mathcal F$ has a unique minimum $x_*$ and
satisfies $\mathcal F(x)\ge c\|x\|-C$ for some constants
$c,C>0$.
At $x_*$,
\begin{equation}\label{pub:saddle-values}
 \mathcal V=N,\qquad \mathcal A=n^{-n/2},\qquad
 \det(X_jX_j^t)=1\ (1\le j<n),\qquad
 P_\epsilon=n^{-n\epsilon/2}.
\end{equation}
\end{prop}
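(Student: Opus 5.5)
The plan is to work throughout in the logarithmic coordinates $x\in\mathbb R^D$ of \S\ref{pub:analytic-convergence}. There
\[
 \mathcal V(x)=\sum_e e^{2\ell_e(x)},\qquad
 \log\mathcal A(x)=\sum_{i<n}x_i^{(n)}
 =\sum_{i<n} i\,\bigl(x_i^{(n)}-x_{i+1}^{(n)}\bigr),
\]
where the $\ell_e$ are the linear forms $\ell^D_{k,i},\ell^E_{k,i}$ introduced in the proof of Lemma~\ref{pub:convergence}.

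\textbf{Step 1: strict convexity and the linear lower bound.} $\mathcal V$ is a positive combination of exponentials of linear forms, and these forms have zero common kernel. Hence its Hessian $\sum_e 4e^{2\ell_e}\,\ell_e\otimes\ell_e$ is positive definite. Since $\log\mathcal A$ is linear, $\mathcal F$ is strictly convex.

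For the lower bound, apply the decomposition \eqref{pub:positive-decomposition} with $\sigma_i=i$. It gives $\log\mathcal A=\sum_e\beta_e\ell_e$ with all $\beta_e>0$. Choose $0<\delta<\min_e\beta_e$. Each one-variable function $e^{2u}-\beta_e u-\delta|u|$ is bounded below. Summing over $e$ and using \eqref{pub:linear-form-norm} yields $\mathcal F(x)\ge\delta c\|x\|-C$. Coercivity together with strict convexity gives existence and uniqueness of the minimum $x_*$.

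\textbf{Step 2: the value $\mathcal V=N$.} This comes from the common scaling \eqref{pub:common-scale}. The function $t\mapsto\mathcal F(x_*+t\mathbf v)=e^{2t}\mathcal V(x_*)-2Nt-\log\mathcal A(x_*)$ has a critical point at $t=0$. Therefore $2\mathcal V(x_*)=2N$.

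\textbf{Step 3: the remaining values.} I would write out the critical-point equations. For $k<n$, differentiation in $x_i^{(k)}$ gives
\[
 (E_i^{(k)})^2-(D_i^{(k)})^2+(D_i^{(k-1)})^2-(E_{i-1}^{(k-1)})^2=0,
\]
with the convention that entries with out-of-range indices vanish. For $k=n$ and $i<n$ one gets
\[
 2(D_i^{(n-1)})^2-2(E_{i-1}^{(n-1)})^2=1 .
\]
I would recast these as diagonal identities among the matrices $\mathcal B_k\mathcal B_k^t$ and $\mathcal B_{k-1}^t\mathcal B_{k-1}$. The aim is to show that at $x_*$ they force $X_jX_j^t=I_j$ for all $j<n$. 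This is consistent with the scaling identity: $\operatorname{tr}$-counting then gives $\mathcal V=N$.

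Rather than derive this abstractly, I would first exhibit an explicit candidate point. Take $k_*\in O(n)$ whose last column is $n^{-1/2}(1,\ldots,1)^t$, like the matrix $k_0$ in the proof of Theorem~\ref{read:lowest-type}. Realize its row flag by bidiagonal factors $\mathcal B_k$, obtained by successive Gram--Schmidt along the flag as in \eqref{read:reverse-GS}. Then check the stationarity equations directly, and conclude $x_*$ equals this point by uniqueness from Step 1.

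At such a point the rows of $X_j$ are orthonormal, so $\det(X_jX_j^t)=1$ and $\mathcal G_j=1$. For $\epsilon=1$, \eqref{read:B-orthogonal} gives
\[
 P_1=\pm\prod_{j}(k_j,e_n)=n^{-n/2},
\]
where the sign must be tracked through the orientation convention for $\Lambda_{n-1}$. For $\epsilon=0$ one has $P_0=1$. Finally, $\mathcal A=\prod_{i<n}a_i^{(n)}$ is read off from the diagonal of the $n$-th row of coordinates. The prediction is that it equals the product of the last-column entries of $k_*$, namely $n^{-n/2}$. One would confirm this by computing $\det(X_1X_1^t)$ in two ways, or directly from the entries of the explicit solution.

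\textbf{Main obstacle.} Identifying and verifying the explicit critical point is the hard part: solving the Toda-type stationarity system and tracking signs in $P_1$. I would first do $n=2,3$ by hand to fix the pattern, and then prove the general case by induction on $n$ using $X_j=\mathcal B_jX_{j+1}$.
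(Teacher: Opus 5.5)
Your Steps 1 and 2 are correct and follow the paper's argument. You use the positive decomposition of $\log\mathcal A$ with $\sigma_i=i$, strict convexity from the Hessian $4\sum_e e^{2\ell_e}\ell_e\otimes\ell_e$, and the scaling direction $\mathbf v$ to get $\mathcal V(x_*)=N$. Your stationarity equations also agree with \eqref{short:critical-equations}.

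\textbf{The gap is Step 3.} It carries all the remaining content, and it rests on a false ansatz. At $x_*$ one does \emph{not} have $X_jX_j^t=I_j$. For $j=n-1\ge2$ this can never hold anywhere in the domain: $X_{n-1}=\mathcal B_{n-1}$ has positive entries, and its consecutive rows have inner product $E_i^{(n-1)}D_{i+1}^{(n-1)}>0$. For $n=3$, the stationarity system together with the identity $E_1^{(2)}D_2^{(2)}=D_1^{(1)}E_1^{(1)}$ gives $(D_1^{(1)})^2=(E_1^{(1)})^2=\tfrac13$ and
\[
 X_2=\mathcal B_2=\begin{pmatrix}1/\sqrt2&1/\sqrt6&0\\0&\sqrt{2/3}&1\end{pmatrix},\qquad
 X_2X_2^t=\begin{pmatrix}2/3&1/3\\1/3&5/3\end{pmatrix}.
\]
This has determinant $1$ but is not $I_2$. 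So the point you propose to exhibit, a bidiagonal realization of the flag of some $k_*\in O(n)$ with orthonormal rows in each $X_j$, does not exist. The plan to check stationarity there and then invoke uniqueness cannot be carried out. The case $n=2$ is degenerate, since $X_1X_1^t$ is a scalar, so working it by hand would not reveal the problem.

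Two further points are unresolved. For $n\ge3$, the condition ``last column $n^{-1/2}(1,\ldots,1)^t$'' does not single out the right flag. You also leave the sign of $P_1$ open, i.e.\ the orientation of the adapted orthogonal frame, although $P_1=+n^{-n/2}$ is part of the claim.

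What is true is weaker: $\|\Lambda_j\|^2=\det(X_jX_j^t)=1$. Then $\Lambda_j$ agrees up to sign with the wedge of the first $j$ rows of the flag-adapted orthogonal matrix $U$. Since each $\Lambda_j$ enters $P_\epsilon$ with even degree, this gives $\mathcal G_j=1$ and $P_\epsilon(x_*)=P_{n,\epsilon}(U)$. The remaining work is to determine $U_{in}$ and $\det U$ without solving the Toda-type system explicitly, and that is the idea you are missing.

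The paper packages the system as a Lax structure, with $S_k=\operatorname{diag}(1,-1,\ldots,(-1)^{k-1})$:
\begin{enumerate}
\item Take $L_{k+1}$ tridiagonal with diagonal $(D_i^{(k)})^2-(E_{i-1}^{(k)})^2$ and $(L_{k+1})_{i,i+1}=D_i^{(k)}E_i^{(k)}=-(L_{k+1})_{i+1,i}$. Then $L_{k+1}=S_{k+1}\mathcal B_k^tS_k\mathcal B_k$ holds by definition.
\item The critical equations are exactly $\mathcal B_kS_{k+1}\mathcal B_k^tS_k=L_k$. Hence $\mathcal B_kL_{k+1}=L_k\mathcal B_k$, every $L_k$ is nilpotent, and $L_n+L_n^t=I_n-ne_n^te_n$.
\item Since $X_jL_n=L_jX_j$, the matrix $T=UL_nU^t$ is strictly lower triangular. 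This forces $U_{in}^2=1/n$ and $T_{ij}=-1$ for $i>j$.
\item The determinant with rows $e_nL_n^r$, $0\le r<n$, equals $\mathcal A$. Step 3 then gives $\mathcal A=n^{-n/2}\det U$, so positivity of $\mathcal A$ forces $\det U=1$. This is what fixes the sign of $P_1$.
\item Finally, $L_n^{n-j}=S_nX_j^tS_jX_j$ together with Cauchy--Binet yields $\det(X_jX_j^t)=1$.
\end{enumerate}
Without this argument, or a genuine closed form for $x_*$ in all ranks (which your proposed induction does not supply), the values of $\mathcal A$, $\det(X_jX_j^t)$ and $P_\epsilon$ at $x_*$ remain unproved.
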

\begin{proof}
By \eqref{pub:positive-decomposition},
$\log\mathcal A=\sum_e\beta_e\ell_e$ with $\beta_e>0$.
Apply \(\eqref{pub:Gaussian-domination}\) with \(b=1\) and \(L=\log\mathcal A\), we get
$
e^{-\mathcal F(x)}
=e^{-\mathcal V(x)+\log\mathcal A(x)}
\le C_0e^{-c\|x\|}$. Taking logarithms and put $C=\log C_0$, we get
$\mathcal F(x)\ge c\|x\|-C$, so a minimum exists. We may enlarge $C_0$ so that $C>0$.
For $h\ne0$,
\[
 \left.\frac{d^2}{dt^2}\mathcal F(x+th)\right|_{t=0}
             =4\sum_e e^{2\ell_e(x)}\ell_e(h)^2>0
\]
by \eqref{pub:linear-form-norm}. Thus the minimum is unique.
Differentiating in the direction $\mathbf v$ of
\eqref{pub:common-scale}, we obtain $\mathcal V(x_*)=N$.

All matrices in the rest of the proof are evaluated at $x_*$. Set
$D_i^{(k)}=E_i^{(k)}=0$ for indices outside $1\le i\le k<n$.
Differentiating $\mathcal F$ with respect to each
$x_i^{(k)}$, $(k,i)\ne(n,n)$, and setting the derivatives
equal to zero at $x_*$, we obtain 
\begin{equation}\label{short:critical-equations}
\begin{aligned}
 (D_i^{(k)})^2-(E_i^{(k)})^2
 &=(D_i^{(k-1)})^2-(E_{i-1}^{(k-1)})^2
 &&(1\le i\le k<n),\\
 (D_i^{(n-1)})^2-(E_{i-1}^{(n-1)})^2
 &=\tfrac12
 &&(1\le i<n).
\end{aligned}
\end{equation}
Recall that
$\mathcal A
 =\prod_{i<n}\left(\frac{a_i^{(n)}}{a_{i+1}^{(n)}}\right)^i
 =\prod_{i<n}\bigl(D_i^{(n-1)}E_i^{(n-1)}\bigr)^i$. To compute its value,
we express it as a determinant built from powers of a tridiagonal matrix.
Take $L_1=0$ and define for $1\le k<n$,
\[
 \begin{aligned}
 (L_{k+1})_{ii}
   &=(D_i^{(k)})^2-(E_{i-1}^{(k)})^2
       &&(1\le i\le k+1),\\
 (L_{k+1})_{i,i+1}
   &=D_i^{(k)}E_i^{(k)},&
 (L_{k+1})_{i+1,i}
   &=-D_i^{(k)}E_i^{(k)}
       &&(1\le i\le k),
 \end{aligned}
\]
with all other entries zero. Let $e_n=(0,\cdots,0, 1)$ be the row vector as before. Thus
$(L_n)_{i+1,i}=-a_i^{(n)}/a_{i+1}^{(n)}$, and
\[
 (e_nL_n^r)_i=0\quad(i<n-r),\qquad
 (e_nL_n^r)_{n-r}
 =(-1)^r\prod_{i=n-r}^{n-1}
              \frac{a_i^{(n)}}{a_{i+1}^{(n)}}.
\]
Expansion along the anti-diagonal entries, we obtain
\[
 \det\begin{pmatrix}
 e_n\\ e_nL_n\\ \vdots\\ e_nL_n^{n-1}
 \end{pmatrix}
 =(-1)^N\prod_{r=0}^{n-1}
   \left[(-1)^r\prod_{i=n-r}^{n-1}
              \frac{a_i^{(n)}}{a_{i+1}^{(n)}}\right]=\prod_{i<n}
   \left(\frac{a_i^{(n)}}{a_{i+1}^{(n)}}\right)^i
 =\mathcal A.
\]
Here each ratio occurs exactly $i$ times, and
$\sum_{r=0}^{n-1}r=N$ cancels the signs.

We evaluate this determinant by putting $L_n$ into an
explicit triangular form. Set
$S_k=\operatorname{diag}(1,-1,\ldots,(-1)^{k-1})$.
The equations \eqref{short:critical-equations} and $E_i^{(k)}D_{i+1}^{(k)}
 =D_i^{(k-1)}E_i^{(k-1)}$ give
\begin{equation}\label{eq:rectangular}
     L_{k+1}=S_{k+1}\mathcal B_k^tS_k\mathcal B_k,\qquad
 \mathcal B_kS_{k+1}\mathcal B_k^tS_k=L_k;
\end{equation}
Consequently, by induction from $L_1=0$, we obtain
\begin{equation}\label{short:matrix-identities}
 \mathcal B_kL_{k+1}=L_k\mathcal B_k,\qquad
 \det(zI_{k+1}-L_{k+1})
 =z\det(zI_k-L_k)=z^{k+1}.
\end{equation}
Thus every $L_k$ is nilpotent.
Its off-diagonal entries
are skew-symmetric, so \eqref{short:critical-equations}
and $\operatorname{tr}L_n=0$ imply
\begin{equation}\label{short:symmetric-part}
 L_n+L_n^t=I_n-ne_n^te_n.
\end{equation}

Since $\operatorname{rank}X_j=j$ and
$\operatorname{row}(X_j)\subset\operatorname{row}(X_{j+1})$,
choose the rows of $U$ successively as unit vectors in
$\operatorname{row}(X_j)$ orthogonal to the preceding rows.
Then $U$ is orthogonal. Moreover,
$X_jL_n=L_jX_j$, $L_n$ is nilpotent, so $T=UL_nU^t$ is lower triangular and nilpotent, thus
$T_{ii}=0$. Hence
\[
 T+T^t=I_n-n(Ue_n^t)(e_nU^t)
 \quad\Longrightarrow\quad U_{in}^2=1/n.
\]
Multiplying the $i$-th row by $-1$ whenever $U_{in}<0$
preserves orthogonality and the span of the first $j$ rows
for every $j$, so we may choose $U_{in}=n^{-1/2}$ for all $i$.
The off-diagonal entries of $T+T^t$ are then $-1$, thus $T_{ij}=-1$ if $i>j$, and $0$ otherwise.

For $0\le r<n$, we have
$e_nL_n^r=(e_nU^t)T^rU$ and
\[
 \bigl((e_nU^t)T^r\bigr)_i=0\quad(i>n-r),\qquad
 \bigl((e_nU^t)T^r\bigr)_{n-r}=(-1)^rn^{-1/2}.
\]
Thus the determinant expression for $\mathcal A$ becomes
\[
 \mathcal A
 =\det\begin{pmatrix}
 e_n\\ e_nL_n\\ \vdots\\ e_nL_n^{n-1}
 \end{pmatrix}
 =\det\begin{pmatrix}
 e_nU^t\\ (e_nU^t)T\\ \vdots\\ (e_nU^t)T^{n-1}
 \end{pmatrix}\det U=(-1)^N\prod_{r=0}^{n-1}
       \bigl((-1)^rn^{-1/2}\bigr)\det U
 =n^{-n/2}\det U.
\]
Since $\mathcal A>0$ and $\det U=\pm1$, $\det U=1$ and $\mathcal A=n^{-n/2}$.

To evaluate $P_\epsilon(x_*)$, we first compute
$\|\Lambda_j(x_*)\|^2$ for $1\le j<n$.
Iterating the matrix identities gives
\begin{equation}\label{short:power-factorization}
 L_n^{n-j}=S_nX_j^tS_jX_j.
\end{equation}

By Cauchy-Binet formula and \eqref{short:power-factorization},
\[\det(X_jX_j^t)^2=\left(\sum_{|I|=j}\det((X_j)_{:,I})^2\right)^2=\sum_{|I|=|J|=j}\det\bigl((L_n^{n-j})_{I,J}\bigr)^2,
\]
where $(X_j)_{:,I}$ denotes the matrix obtained by selecting the columns of $X_j$ indexed by $I$ and retaining all rows.
The second equality uses that the diagonal entries of
$S_n$ and $S_j$ are $\pm1$.
Since $\bigwedge^jU$ is orthogonal, this sum is unchanged
when $L_n^{n-j}$ is replaced by $T^{n-j}=UL_n^{n-j}U^t$.
The only nonzero $j\times j$ minor of $T^{n-j}$ is given by its
last $j$ rows and first $j$ columns; this block is lower
triangular with diagonal $(-1)^{n-j}$.
Thus $\det(X_jX_j^t)^2=1$, and positive definiteness gives
 $\|\Lambda_j\|^2=\det(X_jX_j^t)=1$.
The first $j$ rows of $U$ and the rows of $X_j$ span the
same space. Their wedges have norm one, so they differ
only by a sign. These signs do not change $P_\epsilon$,
since its degree in each $\Lambda_j$ is $2r_j$.
Finally, since the rows of $U$ are orthogonal, $\mathcal G_j(U)=1$, \eqref{read:B-orthogonal} gives
\[
 P_\epsilon(x_*)=P_{n,\epsilon}(U)
 =\left(\det U\prod_{i=1}^nU_{in}\right)^\epsilon
 =n^{-n\epsilon/2},
\]
where $\det U=1$ and $U_{in}=n^{-1/2}$.
\end{proof}

\subsection{Proof of Proposition~\ref{read:analytic-input} (3)}\label{pub:Laplace-section}
The conclusion $\lim_{s\to+\infty}Q(s)=1$ depends on the precise
constants and polynomial evaluations established above. Here the
cohomological hypothesis supplies the common inducing exponent and
the strictly decreasing weights of common parity used in
\eqref{read:recursion} and the construction of our chosen polynomial realization of test
vectors. We again use the integral representation obtained by
iterating \eqref{read:recursion}. The homogeneity of the polynomial,
its value at the minimum in Proposition~\ref{pub:saddle-main}, and
the scalar integral in Proposition~\ref{pub:scalar-calibration}
give the asymptotic ratio of the polynomial and scalar integrals.
Finally, the exact factor $K_{n,\epsilon}$ and the gamma factors in
$T_{n,\epsilon}$ show that the limit is precisely $1$.

For real $s>0$, make the change of variables
$x=u+\frac12\log(s/\pi)\mathbf v$.
By \eqref{pub:common-scale},
\[
 \mathcal V(x)=\frac s\pi\mathcal V(u),\qquad
 \mathcal A(x)=\left(\frac s\pi\right)^N\mathcal A(u),\qquad
 P_\epsilon(x)=\left(\frac s\pi\right)^\Omega P_\epsilon(u).
\]
Since $dx=du$ and $\mathcal F=\mathcal V-\log\mathcal A$,
\[
 I_{P_\epsilon,n}(s)
 =\left(\frac s\pi\right)^{Ns+\Omega}
   \int_{\mathbb R^D}P_\epsilon(u)e^{-s\mathcal F(u)}\,du,\qquad 
 I_{1,n}(s)=\left(\frac s\pi\right)^{Ns}
   \int_{\mathbb R^D}e^{-s\mathcal F(u)}\,du.
\]
Renaming $u$ as $x$ gives
\begin{equation}\label{pub:Laplace-ratio-integral}
 \frac{I_{P_\epsilon,n}(s)}{I_{1,n}(s)}
 =\left(\frac s\pi\right)^\Omega
 \frac{\int_{\mathbb R^D}P_\epsilon(x)e^{-s\mathcal F(x)}\,dx}
      {\int_{\mathbb R^D}e^{-s\mathcal F(x)}\,dx}.
\end{equation}
Put $\Xi=\mathcal F-\mathcal F(x_*)$ and
$J_s=\int_{\mathbb R^D}e^{-s\Xi}\,dx$.
By Proposition~\ref{pub:saddle-main}, $\Xi$ has the unique
minimum $0$ at $x_*$ and grows at least linearly at infinity.
Since $P_\epsilon$ grows at most exponentially, there exists
$s_0>0$ such that
$\int_{\mathbb R^D}(1+|P_\epsilon|)e^{-s_0\Xi}\,dx<\infty$.

Fix $\delta>0$ and let $B_\delta$ be the ball centered at $x_*$
of radius $\delta$. Then
$\eta_\delta=\inf_{\mathbb R^D\setminus B_\delta}\Xi>0$.
Choose $0<r<\delta$ such that $\Xi\le\eta_\delta/2$ on $B_r$.
For $s\ge s_0$, we have
\[
 J_s\ge\operatorname{vol}(B_r)e^{-s\eta_\delta/2},
 \qquad
 \int_{\mathbb R^D\setminus B_\delta}
 |P_\epsilon-P_\epsilon(x_*)|e^{-s\Xi}\,dx
 \le C e^{-(s-s_0)\eta_\delta}.
\]
Thus the contribution outside $B_\delta$, divided by $J_s$,
tends to zero. Inside $B_\delta$, the normalized error is at
most $\sup_{B_\delta}|P_\epsilon-P_\epsilon(x_*)|$. Hence
\[
 \limsup_{s\to+\infty}
 \left|\frac{\int_{\mathbb R^D}P_\epsilon e^{-s\Xi}\,dx}{J_s}
       -P_\epsilon(x_*)\right|
 \le \sup_{B_\delta}|P_\epsilon-P_\epsilon(x_*)|.
\]
By continuity of $P_\epsilon$, the right-hand side tends to zero
as $\delta\to0$, so the quotient converges to
$P_\epsilon(x_*)=n^{-n\epsilon/2}$ by Proposition~\ref{pub:saddle-main} as $s\to+\infty$.
The quotient in the above inequality is unchanged when $\Xi$ is replaced by $\mathcal F$,
so \eqref{pub:Laplace-ratio-integral} yields
\begin{equation}\label{pub:Laplace-limit}
 \frac{I_{P_\epsilon,n}(s)}{I_{1,n}(s)}
 =\left(\frac s\pi\right)^\Omega
   \bigl(n^{-n\epsilon/2}+o(1)\bigr).
\end{equation}

Using the exact factor
$K_{n,\epsilon}=\prod_{k=1}^{n-1}(-1)^{k\epsilon}\kappa_{k+1}$
from \eqref{read:recursion}, equations
\eqref{upper:period-integral}, \eqref{pub:I1-value}, and
\eqref{read:T} give
\begin{equation}\label{read:Q-ratio}
 Q(s)=\pi^\Omega
 \frac{I_{P_\epsilon,n}(s)}{I_{1,n}(s)}
 \frac{G_0(s)}{G_{\mathbf r,\epsilon}(s)}.
\end{equation}
Stirling's formula
\[
 \Gamma(x)=\sqrt{2\pi}\,x^{x-\frac12}e^{-x}
 \bigl(1+O(x^{-1})\bigr)
 \qquad(x\to+\infty)
\]
implies, for every fixed $a$,
$\Gamma(x+a)/\Gamma(x)=x^a(1+O(x^{-1}))$.
Applying this with $x=s/2$, $ns/2$, and $s$, respectively,
we obtain, as $s\to+\infty$,
\[
 \begin{aligned}
 \frac{G_{\mathbf r,\epsilon}(s)}{G_0(s)}
 &=
 \frac{\Gamma((s+\epsilon)/2)^n}{\Gamma(s/2)^n}
 \frac{\Gamma(ns/2)}{\Gamma(n(s+\epsilon)/2)}
 \prod_{i<j}\frac{\Gamma(s+b_{ij})}{\Gamma(s)}\\
 &=
 \left(\frac{s}{2}\right)^{n\epsilon/2}
 \left(\frac{ns}{2}\right)^{-n\epsilon/2}
 s^{\sum_{i<j}b_{ij}}\bigl(1+O(s^{-1})\bigr)\\&=n^{-n\epsilon/2}s^\Omega\bigl(1+O(s^{-1})\bigr),
 \end{aligned}
\]
where $\Omega=\sum_{i<j}b_{ij}$.
Combining this with \eqref{pub:Laplace-limit} in
\eqref{read:Q-ratio}, we obtain
\[
 Q(s)=
 \frac{\pi^\Omega(s/\pi)^\Omega
       \bigl(n^{-n\epsilon/2}+o(1)\bigr)}
      {n^{-n\epsilon/2}s^\Omega\bigl(1+O(s^{-1})\bigr)}
 =1+o(1),
\]
which proves $\lim_{s\to+\infty}Q(s)=1$.
For $d^\sharp=(-d_n,\ldots,-d_1)$ the adjacent differences
are $r_i^\sharp=r_{n-i}\ge1$, and the parity remains $\epsilon$.
The same argument therefore proves assertions (2) and (3) for
$Q^\vee$, completing Proposition~\ref{read:analytic-input} (3).


\begin{thebibliography}{99}

\bibitem{adrianTakeda2024local}
Moshe Adrian and Shuichiro Takeda.
\newblock On local {$L$}-factors for {Archimedean} {$\mathrm{GL}(n)$}.
\newblock {\em Journal of Algebra}, 653:133--155, 2024.

\bibitem{akmss2021}
U.~K. Anandavardhanan, Robert Kurinczuk, Nadir Matringe,
Vincent S\'echerre, and Shaun Stevens.
\newblock Galois self-dual cuspidal types and Asai local factors.
\newblock {\em Journal of the European Mathematical Society},
23(9):3129--3191, 2021.

\bibitem{anandavardhanan-matringe2017}
U.~K. Anandavardhanan and Nadir Matringe.
\newblock Test vectors for local periods.
\newblock {\em Forum Mathematicum}, 29(6):1245--1260, 2017.

\bibitem{anandavardhanan-rajan2005}
U.~K. Anandavardhanan and C.~S. Rajan.
\newblock Distinguished representations, base change, and reducibility
for unitary groups.
\newblock {\em International Mathematics Research Notices},
2005(14):841--854, 2005.

\bibitem{beuzart2021archimedean}
Rapha{\"e}l Beuzart-Plessis.
\newblock Archimedean theory and {$\epsilon$}-factors for the {Asai
  Rankin--Selberg} integrals.
\newblock In Werner M{\"u}ller, Sug~Woo Shin, and Nicolas Templier, editors,
  {\em Relative Trace Formulas}, Simons Symposia, pages 1--50. Springer, Cham,
  2021.
\newblock Section and theorem references follow
  \href{https://arxiv.org/abs/1812.00053v3}{arXiv:1812.00053v3}.

\bibitem{chen-cheng-theta}
Shih-Yu Chen and Yao Cheng.
\newblock Whittaker functions on {$\mathrm{GL}_n$} via theta lifting, 2026.
\newblock arXiv:2602.06315.

\bibitem{duran1989stieltjes}
Antonio~J. Dur\'an.
\newblock The Stieltjes moments problem for rapidly decreasing functions.
\newblock {\em Proceedings of the American Mathematical Society},
107(3):731--741, 1989.

\bibitem{flicker1988}
Yuval~Z. Flicker.
\newblock Twisted tensors and Euler products.
\newblock {\em Bulletin de la Soci\'et\'e Math\'ematique de France},
116(3):295--313, 1988.

\bibitem{flicker1993}
Yuval~Z. Flicker.
\newblock On zeroes of the twisted tensor {$L$}-function.
\newblock {\em Mathematische Annalen}, 297:199--219, 1993.

\bibitem{goldberg1994}
David Goldberg.
\newblock Some results on reducibility for unitary groups and local
Asai {$L$}-functions.
\newblock {\em Journal f\"ur die reine und angewandte Mathematik},
448:65--95, 1994.

\bibitem{grobner-harris-lapid2016}
Harald Grobner, Michael Harris, and Erez Lapid.
\newblock Whittaker rational structures and special values of the
Asai {$L$}-function.
\newblock In {\em Advances in the Theory of Automorphic Forms and
Their {$L$}-Functions}, volume 664 of {\em Contemporary Mathematics},
pages 119--134. American Mathematical Society, Providence, RI, 2016.

\bibitem{grobner-lin2021}
Harald Grobner and Jie Lin.
\newblock Special values of {$L$}-functions and the refined
Gan--Gross--Prasad conjecture.
\newblock {\em American Journal of Mathematics}, 143(3):859--937, 2021.

\bibitem{henniart2010}
Guy Henniart.
\newblock Correspondance de Langlands et fonctions {$L$} des carr\'es
ext\'erieur et sym\'etrique.
\newblock {\em International Mathematics Research Notices},
2010(4):633--673, 2010.

\bibitem{humphries-jo2024}
Peter Humphries and Yeongseong Jo.
\newblock Test vectors for archimedean period integrals.
\newblock {\em Publicacions Matem\`atiques}, 68(1):139--185, 2024.

\bibitem{ishii-stade}
Taku Ishii and Eric Stade.
\newblock Archimedean zeta integrals on {$\mathrm{GL}_n\times\mathrm{GL}_m$}
  and {$\mathrm{SO}_{2n+1}\times\mathrm{GL}_m$}.
\newblock {\em Manuscripta Mathematica}, 141(3--4):485--536, 2013.

\bibitem{jacquet2009}
Herv{\'e} Jacquet.
\newblock Archimedean {Rankin--Selberg} integrals.
\newblock In David Ginzburg, Erez Lapid, and David Soudry, editors, {\em
  Automorphic Forms and {$L$}-Functions II: Local Aspects}, volume 489 of {\em
  Contemporary Mathematics}, pages 57--172. American Mathematical Society,
  Providence, RI, 2009.

\bibitem{kable2004}
Anthony~C. Kable.
\newblock Asai {$L$}-functions and Jacquet's conjecture.
\newblock {\em American Journal of Mathematics}, 126(4):789--820, 2004.

\bibitem{kruse2021series}
Karsten Kruse.
\newblock Series representations in spaces of vector-valued functions
  via Schauder decompositions.
\newblock {\em Mathematische Nachrichten}, 294(2):354--376, 2021.

\bibitem{macdonald1995}
I.~G. Macdonald.
\newblock \emph{Symmetric Functions and Hall Polynomials}.
\newblock Second edition, Oxford Mathematical Monographs.
\newblock Clarendon Press, Oxford, 1995.

\bibitem{matringe2011}
Nadir Matringe.
\newblock Distinguished generic representations of {$\GL(n)$} over
{$p$}-adic fields.
\newblock {\em International Mathematics Research Notices},
2011(1):74--95, 2011.

\bibitem{matringe2013}
Nadir Matringe.
\newblock Essential Whittaker functions for {$\GL(n)$}.
\newblock {\em Documenta Mathematica}, 18:1191--1214, 2013.

\bibitem{raghuram2016critical}
A.~Raghuram.
\newblock Critical values of {Rankin--Selberg} {$L$}-functions for
  {$\mathrm{GL}_n\times\mathrm{GL}_{n-1}$} and the symmetric cube
  {$L$}-functions for {$\mathrm{GL}_2$}.
\newblock {\em Forum Mathematicum}, 28(3):457--489, 2016.

\bibitem{shahidi1985}
Freydoon Shahidi.
\newblock Local coefficients as Artin factors for real groups.
\newblock {\em Duke Mathematical Journal}, 52(4):973--1007, 1985.

\bibitem{shahidi1990}
Freydoon Shahidi.
\newblock A proof of Langlands' conjecture on Plancherel measures;
complementary series of {$p$}-adic groups.
\newblock {\em Annals of Mathematics} (2), 132(2):273--330, 1990.

\bibitem{shankman2018}
Daniel Shankman.
\newblock Local Langlands correspondence for Asai {$L$}-functions
and epsilon factors.
\newblock Preprint, 2018. \href{https://arxiv.org/abs/1810.11852}{arXiv:1810.11852}.

\bibitem{stein-weiss1971}
Elias~M. Stein and Guido Weiss.
\newblock \emph{Introduction to Fourier Analysis on Euclidean Spaces}.
\newblock Princeton Mathematical Series, volume~32.
\newblock Princeton University Press, Princeton, NJ, 1971.

\end{thebibliography}
\end{document}